\documentclass[oneside,english,a4wide]{amsart}
\usepackage[latin9]{inputenc}
\usepackage[a4paper]{geometry}
\usepackage{mathabx}
\usepackage{esvect}
\usepackage{babel}
\usepackage{mathtools}
\usepackage{amstext}
\usepackage{amsthm}
\usepackage{amssymb}
\usepackage{enumerate}
\usepackage{esint}
\usepackage{graphicx}
\usepackage{bbm}
\usepackage[all]{xy}
\usepackage{mathrsfs}
\usepackage[unicode=true,
bookmarks=true,bookmarksnumbered=true,bookmarksopen=true,bookmarksopenlevel=2,
breaklinks=false,pdfborder={0 0 1},backref=false,colorlinks=false]
{hyperref}
\hypersetup{colorlinks,linkcolor=blue,anchorcolor=blue,citecolor=blue}

\usepackage[svgnames]{xcolor} 

\usepackage{graphicx, tikz-cd} 
\usepackage{cite} 

\usepackage[most]{tcolorbox}
\newtcolorbox{maintheorembox}{
  enhanced,
  breakable,
  colback=white,
  colframe=black,
  boxrule=0.8pt,
  arc=0pt,
  left=6pt,
  right=6pt,
  top=6pt,
  bottom=6pt
}

\makeatletter
\numberwithin{equation}{section}
\numberwithin{figure}{section}
\theoremstyle{plain}
\newtheorem{thm}{\protect\theoremname}[section]
\theoremstyle{plain}

\newtheorem{proposition}[thm]{Proposition}

\theoremstyle{definition}

\theoremstyle{plain}

\newtheorem{lemma}[thm]{Lemma}
\theoremstyle{remark}

\theoremstyle{definition}
\newtheorem{definition}[thm]{Definition}
\newtheorem*{claim*}{Claim}

\theoremstyle{remark}
\newtheorem{remark}[thm]{Remark}
\theoremstyle{definition}
\newtheorem*{defn*}{\protect\definitionname}

\newtheorem*{maintheorem}{Main Theorem}

\usepackage{babel}

\makeatother

\providecommand{\definitionname}{Definition}
\providecommand{\lemmaname}{Lemma}
\providecommand{\propositionname}{Proposition}
\providecommand{\remarkname}{Remark}
\providecommand{\theoremname}{Theorem}

\newcommand{\Rmnum}[1]{\uppercase\expandafter{\romannumeral #1}}
\makeatother

\newcommand{\real}{{\mathbb R}}
\newcommand{\nat}{{\mathbb N}}

\newcommand{\com}{{\mathbb C}}
\newcommand{\ce}{{\mathbb E}}

\newcommand{\T}{{\mathcal T}}
\newcommand{\A}{{\mathcal A}}

\newcommand{\F}{{\mathcal F}}

\newcommand{\N}{{\mathcal N}}

\newcommand{\R}{{\mathcal R}}

\newcommand{\Q}{{\mathcal Q}}

\newcommand{\e}{\varepsilon}
\newcommand{\f}{\varphi}

\newcommand{\wt}{\widetilde}
\newcommand{\wh}{\widehat}

\newcommand{\be}{\begin{eqnarray*}}
	\newcommand{\ee}{\end{eqnarray*}}
\newcommand{\beq}{\begin{equation}}
	\newcommand{\eeq}{\end{equation}}
\newcommand{\beqn}{\begin{equation*}}
	\newcommand{\eeqn}{\end{equation*}}
\newcommand{\bs}{\begin{split}}
	\newcommand{\es}{\end{split}}
\newcommand{\W}{{\mathcal W}}
\renewcommand{\H}{{\mathcal H}}

\newcommand{\K}{{\mathcal K}}

\newcommand\norm[1]{ \left\| #1 \right\| }

\newcommand\sk[1]{ \left( #1 \right) }

\newcommand\lk[1]{ \left\{ #1 \right\} }
\newcommand{\lara}[1]{\left\langle #1 \right\rangle}

\renewcommand\d{{\rm d}}

\begin{document}
	\pagestyle{myheadings}
	\markboth{Noncommutative PI}{}	
	
	
	\title[Dimension-free Riesz transform estimates on biased hypercubes]{Dimension-free Riesz transform estimates on biased hypercubes via non-tracial baby Fock models}
	
	\thanks{{\it 2020 Mathematics Subject Classification:} Primary: 42B35, 47D03. Secondary: 46F10, 46E30}
	\thanks{{\it Key words:} Riesz transform, dimension-free estimates, biased hypercube, baby Fock model, $q$-Araki-Woods algebras}
	
	\author[Hun Hee Lee]{Hun Hee Lee}
	\address{Department of Mathematical Sciences and the Research Institute of Mathematics, Seoul National University, Gwanak-ro 1, Gwanak-gu, Seoul 08826, Republic of Korea}
	\email{hunheelee@snu.ac.kr}

	\author[Zhendong  Xu]{Zhendong Xu}
	\address{
		Department of Mathematical Sciences and the Research Institute of Mathematics, Seoul National University, Gwanak-ro 1, Gwanak-gu, Seoul 08826, Republic of Korea}
	\email{zhendong\_xu\_97@snu.ac.kr}
	
	\author[Sang-Gyun Youn]{Sang-Gyun Youn}
	\address{Department of Mathematics Education and NextQuantum Innovation Research Center, Seoul National University, Gwanak-ro 1, Gwanak-gu, Seoul 08826, Republic of Korea}
	\email{s.youn@snu.ac.kr}

	\author[Hao Zhang]{Hao Zhang}
	\address{
		Instituto de Ciencias Matem\'{a}ticas, Consejo Superior de Investigaciones Cient\'{i}ficas, C/ Nicol\'{a}s Cabrera 13-15, 28049, Madrid, Spain}
	\email{hao.zhang@icmat.es}
	
	\date{}

	\begin{abstract}

We study dimension-free Riesz transform estimates on hypercubes equipped with non-uniform product measures. For $2\leq p<\infty$, we prove a dimension-free Meyer inequality for the associated carr\'e du champ, with constants independent of both the dimension and the product measure. In contrast to the uniform case, the carr\'e-du-champ square function does not in general coincide with the unweighted Riesz square function. Under an additional boundedness assumption on the weight parameters, we further obtain dimension-free estimates for the latter.

Our proof is based on a transference from the biased hypercube to a non-tracial baby Fock model. We introduce new annihilation operators and the associated Riesz transforms on the biased hypercube, and derive explicit formulas relating them to the Riesz transforms on the non-tracial baby Fock model. The main analytic ingredient is a dimension-free $L_p$-estimate for the resulting noncommutative Riesz transform, proved by a rotation argument in the spirit of Pisier and Lust-Piquard. The same analytic mechanism also yields dimension-free Riesz transform estimates on $q$-Araki--Woods algebras.
    
\end{abstract}
	
	\maketitle
	
	\tableofcontents{}
	
\section{Introduction}\label{Introduction}

Let $T_t=e^{-tA}$, $t\geq 0$, be a symmetric Markov semigroup on a measure space $(\Sigma,\psi)$, with a positive infinitesimal generator $A$. The carr\'e du champ associated with $(T_t)_{t\geq 0}$ is defined by
\[\Gamma(f,g)=\frac12\left(\overline{f}A(g)+\overline{A(f)}g-A(\overline{f}g)\right).\]
A fundamental question in this setting is {\it Meyer's problem}: for $1<p<\infty$, under what conditions do we have
\[\|A^{1/2}f\|_{L_p(\Sigma,\psi)}\approx_p\|\Gamma(f,f)^{1/2}\|_{L_p(\Sigma,\psi)},\]
with constants independent of the dimension of the underlying model?

A basic example is the classical heat semigroup on $\mathbb{R}^n$, whose infinitesimal generator is
\[N=\sum_{j=1}^n\partial_j^*\partial_j=-\Delta.\]
In this case, the {\it carr\'e du champ} with respect to $N$ is given by $\displaystyle \Gamma(f,f)=\sum_{j=1}^n|\partial_jf|^2$. Consequently, the square function is given by
\[\Gamma(N^{-1/2}f,N^{-1/2}f)^{1/2}=\left(\sum_{j=1}^n|\partial_jN^{-1/2}f|^2\right)^{1/2},\]
where $\partial_jN^{-1/2}$ is the $j$-th classical Riesz transform. The results of Stein \cite{Ste1970} and Gundy--Varopoulos \cite{GV1979} yield the dimension-free estimate
\[\left\|\Gamma(N^{-1/2}f,N^{-1/2}f)^{1/2}\right\|_{L_p(\mathbb{R}^n)}\approx_p\|f\|_{L_p(\mathbb{R}^n)},\]
where the implicit constants depend only on $p$ and not on $n$.

Meyer subsequently established analogous estimates for the Ornstein--Uhlenbeck semigroup on Gaussian spaces \cite{Meyer1975,Meyer1984}. Pisier \cite{P1986} gave an analytic proof based on Gaussian rotations and the boundedness of the one-dimensional Hilbert transform. We refer to \cite{Bak1985,Bak1987,L2008,JM2010} for further developments concerning Meyer inequalities and Riesz transforms associated with Markov semigroups.

Dimension-free Riesz transform estimates have also been studied in several discrete and noncommutative settings. Lust-Piquard \cite{LP1998} proved such estimates on the discrete hypercube equipped with the uniform measure.
An important feature of Lust-Piquard's argument is that the commutative hypercube estimates are obtained through a noncommutative spin model, using a transference method inspired by Pisier's rotation argument. Further results for products of abelian groups and mixed $q$-Gaussian models were obtained in \cite{LP1999,LP2004}. Recently, the first non noncommutative approach via Bellman function to discrete Riesz transforms on hypercubes was discovered in
\cite{DIPV2026}. For noncommutative aspects, Junge, Mei and Parcet \cite{JMP2018} developed a general framework for Riesz transforms on group von Neumann algebras associated with $1$-cocycles. We also refer to \cite{JM2010,GPPGR2024,GPX2022,LX2025, DPS2025} for related results in noncommutative harmonic analysis.

The connection between discrete Riesz transforms and noncommutative models is also relevant beyond harmonic analysis. In particular, Naor \cite{NA2016} used Lust-Piquard's estimates on the hypercube in the study of metric $X_p$ inequalities and Banach-space embeddings. This provides part of the motivation for developing Riesz transform estimates on discrete
probability spaces with less symmetry than the classical uniform cube.

\medskip

\noindent
\textbf{Commutative model.}
Our main results concern the biased hypercube $\Omega_n=\{-1,1\}^n$ equipped with a product probability measure $\nu=\nu_1\times\cdots\times\nu_n$, where
\[\nu_j(\{-1\})=\frac{\mu_j^2}{\mu_j^{-2}+\mu_j^2},
\qquad \nu_j(\{1\}) =\frac{\mu_j^{-2}}{\mu_j^{-2}+\mu_j^2}, \qquad \mu_j>0.\]

Most of the classical theory of discrete Riesz transforms concerns hypercubes equipped with the uniform measure. Functional inequalities on such spaces have important applications in the local and nonlinear theory of Banach spaces, including the Ribe program, as well as in theoretical computer science; see, for example, \cite{Ball2012,E2019,IRV2020,NA2012,NA2016,NA2018}. There has also been increasing interest in biased and non-homogeneous product measures, including hypercontractive inequalities \cite{O2003,W2007}, vector-valued concentration inequalities \cite{GM2026}, and the work of Eskenazis on functional and metric inequalities for non-uniform hypercubes \cite{E2023,E2024}.

Let $[n]=\{1,2,\ldots,n\}$. For $j\in [n]$, define the normalized coordinate function $\zeta_j:\Omega_n\to\mathbb{R}$ by
\[\zeta_j(\omega)=\begin{cases}
\mu_j^2, & \omega_j=1,\\
-\mu_j^{-2}, & \omega_j=-1.
\end{cases}\]
Set $\zeta_{\varnothing}=1$ and $\zeta_A=\prod_{j\in A}\zeta_j$ for any non-empty subset $A\subseteq[n]$. Then $\{\zeta_A\}_{A\subseteq[n]}$ is an orthonormal basis of $L_2(\Omega_n,\nu)$.

With respect to this basis, we introduce new annihilation operators by
\[D_j(\zeta_A)=\begin{cases}
\zeta_{A\setminus\{j\}}, & j\in A,\\
0, & j\notin A.
\end{cases}\]
and define the $j$-th Riesz transform by \[ R_j =\frac{\mu_j^{-2}+\mu_j^2}{2}D_jN^{-1/2}. \]
When $\mu_j=1$ for every $j$, this reduces to the usual Riesz transform on the uniform hypercube.

The number operator is $\displaystyle N=\sum_{j=1}^nD_j^*D_j$, and the Ornstein--Uhlenbeck semigroup associated with the product measure $\nu$ is $P_t=e^{-tN}$. The same semigroup $P_t$ was studied by Eskenazis in \cite{E2023,E2024}, but we provide a first-order realization of its coordinate generators through the annihilation--creation factorization using $D_j$. This realization makes it possible to relate the resulting Riesz transforms $R_j$ explicitly to the Riesz transforms $R_{\pm j}^{\e}$ on a non-tracial baby Fock model.

The carr\'e du champ associated with $N$ is given by
\[\Gamma(f,g) = \sum_{j=1}^n \frac{1+\zeta_j^2}{2} \cdot \overline{D_jf}\,D_jg.\]
See Proposition \ref{hz:propA1} in Appendix \ref{appendix-A} for detailed calculations. Consequently,
\[\Gamma(N^{-1/2}f,N^{-1/2}f)^{1/2}=\left(\sum_{j=1}^n\frac{2(1+\zeta_j^2)}{(\mu_j^{-2}+\mu_j^2)^2} |R_jf|^2 \right)^{1/2}.\]
Unlike the uniform case, the carr\'e-du-champ square function $\Gamma(N^{-1/2}f,N^{-1/2}f)^{1/2}$ does not coincide with the unweighted Riesz square function $\displaystyle \left(\sum_{j=1}^n|R_jf|^2 \right)^{1/2}$. Thus, in the biased setting, the usual Riesz-transform formulation of Meyer's problem naturally leads to the following two questions.

\begin{enumerate}
    \item Does the intrinsic Meyer estimate
    \[\left\|\Gamma(N^{-1/2}f,N^{-1/2}f)^{1/2}\right\|_{L_p(\Omega_n,\nu)}\approx_p\|f\|_{L_p(\Omega_n,\nu)}\]
    hold for mean-zero $f$, with constants independent of the dimension and of the product measure?
    \item Under what conditions do we have the dimension-free estimate
    \[\left\|\left(\sum_{j=1}^n|R_jf|^2\right)^{1/2}\right\|_{L_p(\Omega_n,\nu)}\approx_p\left\|\Gamma(N^{-1/2}f,N^{-1/2}f)^{1/2}\right\|_{L_p(\Omega_n,\nu)}\]
    for the unweighted Riesz square function?
\end{enumerate}

Our strategy has two main components: a \emph{transference} from the biased hypercube to a non-tracial baby Fock model, followed by a \emph{dimension-free $L_p$-estimate} for the corresponding noncommutative Riesz transform $R_{\pm j}^{\e}$ on $\Gamma_{n,\e}$.

\medskip

\noindent
\textbf{Transference to the noncommutative model.} In Section \ref{sec-transference}, we develop a transference method of Lust-Piquard in a non-tracial setting for the Riesz transforms. We construct a state-preserving injective $*$-homomorphism
\[\Phi:L_{\infty}(\Omega_n,\nu)\longrightarrow \Gamma_{n,\varepsilon}\subseteq \Gamma_{n,\varepsilon}^{(2)},\]
where $\Gamma_{n,\varepsilon}^{(2)}$ is a doubled baby Fock model.

More importantly, we derive explicit formulas relating the biased-hypercube Riesz transforms $R_j$ to the baby Fock Riesz transforms $R_j^{\varepsilon}$ and $R_{-j}^{\varepsilon}$ inside $\Gamma_{n,\varepsilon}^{(2)}$. Combined with noncommutative Khintchine inequalities, these formulas allow us to derive the biased-hypercube estimates from a dimension-free $L_p$-estimate on the non-tracial baby Fock model.

Thus, after the transference step, the main analytic task is no longer commutative: it is to establish a dimension-free $L_p$-estimate for the corresponding noncommutative Riesz transform. This is the second main component of the proof.

\medskip

\noindent
\textbf{Dimension-free $L_p$-estimate on the noncommutative model.}
Let $(\Gamma_{n,\varepsilon},\tau_{n,\varepsilon})$ be the non-tracial baby Fock model. In Section \ref{sec:baby Fock}, passing to the doubled noncommutative probability space $(\Gamma_{n,\varepsilon}^{(2)},\tau_{n,\varepsilon}^{(2)})$, we define a Riesz transform $\mathcal{R}_n: \Gamma_{n,\varepsilon}^0 \rightarrow \Gamma_{n,\varepsilon}^{(2)}$, and denote its $L_p$-extension by $\mathcal{R}_n^{(p)}$. Our main noncommutative estimate is
\[\|\mathcal{R}_n^{(p)}(X)\|_{L_p(\Gamma_{n,\varepsilon}^{(2)},\tau_{n,\varepsilon}^{(2)})}
\approx_p \|X\|_{L_p(\Gamma_{n,\varepsilon},\tau_{n,\varepsilon})},\]
for every $1<p<\infty$ and every mean-zero $X\in L_p(\Gamma_{n,\varepsilon})$, with constants independent of the dimension; see Theorem \ref{Riesz-Baby-Fock}.

This dimension-free $L_p$-estimate is the analytic core of the argument. Its proof follows the rotation philosophy of Pisier and Lust-Piquard. The doubled space carries a canonical one-parameter group $(\widetilde{\alpha}_{\theta})_{\theta\in\mathbb{R}}$ of state-preserving $\ast$-automorphisms, which plays the role of the rotation in their arguments. We present a kernel representation of $\mathcal{R}_n$, which yields the decomposition
$\mathcal{R}_n=\widetilde{\Pi}^{(n)}\circ T_n$ in the notation used in the proof. The $L_p$-estimate for $T_n$ follows from the boundedness of the vector-valued Hilbert transform, while the $L_p$-boundedness of $\widetilde{\Pi}^{(n)}$ follows from Stein's complex interpolation theorem.

\medskip

\noindent
\textbf{Returning to the biased hypercube.}
In Section \ref{sec-main-theorem}, we transfer the preceding $L_p$-estimate back to the commutative model. The main tool for recovering the commutative dimension-free $L^p$-estimate is the {\it noncommutative Khintchine inequality}. Combined with the explicit transference formulas and the dimension-free estimate for $\mathcal{R}_n^{(p)}$, it yields our main commutative results.

\begin{maintheorembox}
\begin{maintheorem}
For any $2\leq p<\infty$ and mean-zero $f\in L_p(\Omega_n,\nu)$,
\[\left\|\Gamma(N^{-1/2}f,N^{-1/2}f)^{1/2}\right\|_{L_p(\Omega_n,\nu)}\approx_p \|f\|_{L_p(\Omega_n,\nu)},\]
with constants independent of $n$ and of the product measure $\nu$. Moreover, if
\begin{equation}\label{hz:optimal}
M=\sup_{j\geq1}\max\{\mu_j,\mu_j^{-1}\}<\infty,
\end{equation}
then we obtain
\[\left\|\left(\sum_{j=1}^n|R_jf|^2\right)^{1/2}\right\|_{L_p(\Omega_n,\nu)}\approx_{p,M}\|f\|_{L_p(\Omega_n,\nu)}.\]
\end{maintheorem}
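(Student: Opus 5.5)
The plan follows the three-step architecture announced in the introduction: transfer $f$ into the doubled baby Fock model, apply the noncommutative Riesz estimate there, and read off the commutative square function through noncommutative Khintchine.

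\textbf{Step 1: transference.} Put $X=\Phi(f)\in\Gamma_{n,\e}$. Since $\Phi$ is a state-preserving injective $*$-homomorphism, it extends to an isometry $L_p(\Omega_n,\nu)\to L_p(\Gamma_{n,\e},\tau_{n,\e})$. I would check this carefully: $\Phi$ intertwines $\nu$ with $\tau_{n,\e}$, and its range is commutative, so there are no modular-theory complications in the Haagerup $L_p$ norm. Also, $X$ is mean-zero whenever $f$ is. Theorem \ref{Riesz-Baby-Fock} then gives
\[
\|\mathcal R_n^{(p)}(\Phi f)\|_{L_p(\Gamma^{(2)}_{n,\e})}\approx_p\|f\|_{L_p(\Omega_n,\nu)},
\]
with constants independent of $n$ and of $\e$, hence of the $\mu_j$.

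\textbf{Step 2: identify the noncommutative square function.} I would use the explicit formulas of Section \ref{sec-transference} expressing $R_j f$ through $R^{\e}_{\pm j}\Phi(f)$. Writing $\mathcal R_n(X)=\sum_j\bigl(R^{\e}_j X\, g_j+R^{\e}_{-j}X\, g_{-j}\bigr)$, where the $g_{\pm j}$ are the doubled-model generators, the $g_{\pm j}$ act as a Khintchine-type sequence (free, or anticommuting spin-like, or $\e$-deformed CAR) in the auxiliary copy. For $2\le p<\infty$, the noncommutative Khintchine inequality in the non-tracial setting gives
\[
\|\mathcal R_n(X)\|_p\approx_p\max\Bigl\{\Bigl\|\Bigl(\sum_{\pm j}\lambda_{\pm j}|a_{\pm j}|^2\Bigr)^{1/2}\Bigr\|_p,\ \Bigl\|\Bigl(\sum_{\pm j}\lambda'_{\pm j}|a_{\pm j}^*|^2\Bigr)^{1/2}\Bigr\|_p\Bigr\},
\]
with $a_{\pm j}=R^{\e}_{\pm j}X$ and weights $\lambda,\lambda'$ coming from the state on the generators. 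The formulas should show that each $a_{\pm j}$ is $\Phi$ of $R_j f$ times a multiplier built from $\zeta_j$; this is where $\tfrac{1+\zeta_j^2}{2}$ should appear. Since everything lies in the commutative image of $\Phi$, the row and column terms coincide and the maximum collapses to
\[
\Bigl\|\Bigl(\sum_j \tfrac{2(1+\zeta_j^2)}{(\mu_j^{-2}+\mu_j^2)^2}|R_jf|^2\Bigr)^{1/2}\Bigr\|_p=\|\Gamma(N^{-1/2}f,N^{-1/2}f)^{1/2}\|_p,
\]
which proves the first assertion. The restriction $p\ge2$ enters exactly here: for $p<2$, Khintchine involves an infimum over decompositions, which does not reduce to a single square function.

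\textbf{Step 3: the unweighted square function.} Under \eqref{hz:optimal}, the weight $w_j=\tfrac{2(1+\zeta_j^2)}{(\mu_j^{-2}+\mu_j^2)^2}$ takes the two values $\tfrac{2(1+\mu_j^4)}{(\mu_j^{-2}+\mu_j^2)^2}$ and $\tfrac{2(1+\mu_j^{-4})}{(\mu_j^{-2}+\mu_j^2)^2}$. Both lie in $[c_M,C_M]$ with $c_M\gtrsim M^{-8}$ and $C_M\le 2$, uniformly in $j$. So $\sum_j|R_jf|^2$ and $\Gamma(N^{-1/2}f,N^{-1/2}f)$ are pointwise comparable, and the second estimate follows from the first.

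\textbf{Main obstacle.} The hard part is Step 2: matching the non-tracial Khintchine weights (which involve $\e_j$-dependent modular factors) with the hypercube multiplier $\tfrac{1+\zeta_j^2}{2}$ exactly, so that no $\mu$-dependent constants leak in and the first estimate remains uniform in the measure. I would first verify the identity in a single coordinate $n=1$, where $L_2$ norms can be computed explicitly, and then use the tensor-product structure.
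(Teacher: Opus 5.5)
Your Steps 1 and 3 are fine. In Step 1, the relevant point is that $\Phi$ maps into the centralizer of $\tau_{n,\e}$, not merely that its range is commutative. In Step 3, the weight takes the values $\frac{2\mu_j^4}{1+\mu_j^4}$ and $\frac{2}{1+\mu_j^4}$, both in $[\frac{2}{1+M^4},2]$, so the second assertion does follow from the first by pointwise comparison, as the paper notes after Proposition \ref{prop500}.

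The gap is Step 2, which carries the whole first assertion. There is no Khintchine inequality available for $\mathcal R_n(\Phi f)=\sum_j(\mu_j\gamma_{n+j}a_j+\mu_j^{-1}\gamma_{n+j}^*a_{-j})$ in the generators $\gamma_{n+j}$:
\begin{itemize}
\item they do not commute with the coefficients ($\gamma_{n+j}$ anticommutes with $\gamma_j$);
\item they are neither Rademacher nor free;
\item the state carries unbounded modular weights $\mu_j^{\pm4}$.
\end{itemize}
You flag exactly this matching as the ``main obstacle'' and leave it unresolved.

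Your reason for the collapse of the maximum is also wrong. By Proposition \ref{prop:Riesz-transference}, $a_j$ and $a_{-j}$ are multiples of $\gamma_j^*\Phi(R_jf)$ and $\gamma_j\Phi(R_jf)$. These are not in $\Phi(L_\infty(\Omega_n,\nu))$, and in general not even in the centralizer. Unweighted, $a_j^*a_j+a_{-j}^*a_{-j}$ and $a_ja_j^*+a_{-j}a_{-j}^*$ put the weights $\frac{2\mu_j^4}{1+\mu_j^4}$ and $\frac{2}{1+\mu_j^4}$ on opposite atoms of $\nu_j$.

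More fundamentally, your column expression satisfies $\sum_{\pm j}a_{\pm j}^*a_{\pm j}=\Phi(\Gamma(N^{-1/2}f,N^{-1/2}f))$ (by \eqref{eq:baby-Riesz-Gamma} and Proposition \ref{prop300}). This equals $E(\sum_jA_j^*A_j)$, where $A_j$ is the $j$-th summand of $\mathcal R_n(\Phi f)$ and $E$ is the conditional expectation onto $\Gamma_{n,\e}$. So you are asking for a two-sided comparison of $\|\sum_jA_j\|_p$ with a \emph{conditioned} square function. For $p>2$ that is a Rosenthal-type statement, not a Khintchine-type one, and it fails in general without a diagonal term. Already for one Bernoulli variable $\xi$ of small mean $\pi$, one has $\mathbb E\xi^{p/2}=\pi\gg\pi^{p/2}$. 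Here the relevant Bernoulli variables are $1_{\{z_j(\omega')\neq z_j(\omega)\}}$, whose means are tiny when $\mu_j$ is extreme. Your argument would give a direct proof of $\|f\|_p\lesssim\|\Gamma(N^{-1/2}f,N^{-1/2}f)^{1/2}\|_p$ from the Fock model, which the paper says it does not have (see the remark following Theorem \ref{thm-Riesz-G}).

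What actually works is the following.
\begin{enumerate}
\item Group the $\pm j$ terms into $A_j=\frac{\sqrt2}{\mu_j^2+\mu_j^{-2}}(-\gamma_{n+j}\gamma_j^*+\gamma_{n+j}^*\gamma_j)\Phi(R_jf)$. These are normal and lie in the centralizer of $\tau^{(2)}_{n,\e}$.
\item Use the automorphisms $\rho_\delta$ of Lemma \ref{lem:sign-invariance} (identity on $\Gamma_{n,\e}$, $\gamma_{n+j}\mapsto\delta_j\gamma_{n+j}$) to insert Rademacher signs.
\item Apply the Rademacher Khintchine inequality in Haagerup $L_p$ to $A_jD^{1/p}$, whose row and column square functions agree.
\end{enumerate}
This gives Theorem \ref{thm-Riesz-Walsh}: $\|f\|_p\approx_p\|S\|_{L_p(\Omega_n\times\Omega_n,\nu\times\nu)}$ with $S^2=\sum_j(1\otimes1-z_j\otimes z_j)(|R_jf|^2\otimes1)$. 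This is an \emph{unconditioned} square function on the doubled cube.

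Since $\Gamma(N^{-1/2}f,N^{-1/2}f)=\mathbb E_{\omega'}S^2$ and $\mathbb E_{\omega'}$ is contractive on $L_{p/2}$ for $p\ge2$, one obtains only $\|\Gamma(N^{-1/2}f,N^{-1/2}f)^{1/2}\|_p\lesssim_p\|f\|_p$. The reverse inequality is imported from outside the Fock model: it is the Meyer inequality \eqref{eq501} of Bakry and Junge--Mei, $\|N^{1/2}g\|_p\le C_p\|\Gamma(g,g)^{1/2}\|_p$, applied to $g=N^{-1/2}f$. Your proposal is missing both the sign-randomization step leading to the doubled cube and this external lower bound.
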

\end{maintheorembox}

The first assertion gives an affirmative answer to Question 1, and hence to Meyer's problem for the biased Ornstein--Uhlenbeck semigroup considered above, in the range $2\leq p<\infty$. The second assertion answers Question 2 under the boundedness condition on the weight parameters $\mu_j$ and $\mu_j^{-1}$. A counterexample given at the end of Section \ref{sec-main-theorem} shows that the unweighted square-function estimate can fail in the absence of such control on the weight parameters. This indicates that condition \eqref{hz:optimal} is close to optimal.

\medskip

\noindent
\textbf{Further noncommutative extension.}
The rotation-based analytic mechanism developed for the baby Fock model also extends to the setting of $q$-Araki--Woods algebras. In Section \ref{sec-q-Araki-Woods}, we introduce a Riesz transform on a $q$-Araki--Woods algebra through a gradient on the corresponding doubled $q$-Fock space. The gradient arises from the infinitesimal action of a rotation, and the resulting Riesz transform admits a kernel representation analogous to that in the baby Fock setting. Similarly, applying the $L_p$-boundedness of the Hilbert transform and Stein's complex interpolation theorem, we obtain a dimension-free $L_p$-estimate for this Riesz transform for $1<p<\infty$; see Theorem \ref{thm-main2}. This result is independent of the biased hypercube problem, but shows that the same analytic mechanism underlying the baby Fock estimate extends to another non-tracial setting.

\medskip

\noindent

The framework developed here suggests several further questions, including Banach-space-valued inequalities, Pisier-type and Talagrand-type inequalities on biased hypercubes, and possible applications to metric and Banach-space embedding problems. Some of these directions will be considered in future work \cite{XZZ2026}.

\section{Preliminaries}

\subsection{Biased hypercube}\label{sec:prelim-weighted}

We recall some basics on a hypercube equipped with a non-homogeneous product measure. Let $\Omega_n=\{-1,1\}^n$ and let $\nu=\nu_1\times\cdots\times\nu_n$ be the product probability measure determined by 
\[\nu_j(\{-1\})=\frac{\mu_j^2}{\mu_j^{-2}+\mu_j^2}, \qquad\nu_j(\{1\})=\frac{\mu_j^{-2}}{\mu_j^{-2}+\mu_j^2}, \qquad \mu_j>0.\]

For any $j\in [n]$, define $z_j:\Omega_n\rightarrow \{\pm 1\}$ by $z_j(\omega)=\omega_j$ for all $\omega=(\omega_1,\ldots,\omega_n)\in\Omega_n$, and set $z_\emptyset=1$, $z_A=\prod_{j\in A}z_j$ for all $A\subseteq[n]$. Then $\{z_A\}_{A\subseteq[n]}$ is a linear basis of $L_\infty(\Omega_n,\nu)$.

With respect to the non-uniform product probability measure $\nu$, the family $\left\{z_A: A\subseteq [n]\right\}$ is not an orthonormal basis, so we consider the following normalized coordinate function $\zeta_j:\Omega_n\to\mathbb{R}$ by
\[\zeta_j(\omega)=\begin{cases}
\mu_j^2, & \omega_j=1,\\
-\mu_j^{-2}, & \omega_j=-1
\end{cases}\]
instead. We define $\zeta_{\varnothing}=1$ and $\zeta_A=\prod_{j\in A}\zeta_j$ for any non-empty subset $A\subseteq[n]$. Then $\{\zeta_A\}_{A\subseteq[n]}$ is an orthonormal basis of $L_2(\Omega_n,\nu)$.

For $j\in[n]$, let $E_j$ denote the marginal conditional expectation with respect to the $j$-th coordinate, that is,
\[E_jf(\omega)=\int_{\{-1,1\}}f(\omega_1,\ldots,\omega_{j-1},y,\omega_{j+1},\ldots,\omega_n)\,d\nu_j(y).\]
In particular, for any $j\in [n]$, we have
\begin{equation}\label{eq204}
E_j(\zeta_A)=\begin{cases} 0,& j\in A,\\ \zeta_A,& j\notin A. \end{cases}
\end{equation}

The positive generator of the corresponding discrete heat semigroup is $N=\sum_{j=1}^n(I-E_j)$, and the associated semigroup is $P_t=e^{-tN}$ with $t\geq0$. This semigroup for general non-homogeneous product measures was studied by Eskenazis in \cite{E2023,E2024}. In particular, when $\mu_j=1$ for all $j$, the construction reduces to the classical uniform hypercube.

\subsection{Baby Fock model}\label{sec:prelim-baby}

We next recall the twisted baby Fock model introduced by Nou \cite{NA2006}, following the formulation and structural properties developed in \cite{LR2011}. Let $[\pm n]=\{\pm1,\pm2,\ldots,\pm n\}$, and let $\e:[\pm n]\times [\pm n]\longrightarrow\{-1,1\}$ be a choice of signs satisfying 
\[\e(i,j)=\e(j,i)=\e(|i|,|j|),\qquad \e(i,i)=-1, \qquad i,j\in [\pm n].\]
Let $\{x_j\}_{j\in [\pm n]}$ be a family of self-adjoint matrices in $M_{2^{2n}}(\com)$ satisfying
\begin{equation}\label{hz:index}
x_ix_j-\e(i,j)x_jx_i=2\delta_{i,j}I, \qquad i,j\in [\pm n].
\end{equation}
The unital $\ast$-algebra $\A(n,\e)$ generated by $\{x_j\}_{j\in [\pm n]}$ is the corresponding spin algebra.

A concrete matrix realization can be obtained as follows. Let
\[Q=\begin{pmatrix}
0&1\\
1&0
\end{pmatrix},
\qquad
U=\begin{pmatrix}
1&0\\
0&-1
\end{pmatrix},\]
and, for $j\in [n]$, set
\begin{align*}
Q_j &= I^{\otimes(j-1)} \otimes Q \otimes I^{\otimes(2n-j)}, \qquad U_j = I^{\otimes(j-1)} \otimes U \otimes I^{\otimes(2n-j)}, \\Q_{-j}& =I^{\otimes(j+n-1)} \otimes Q \otimes I^{\otimes(n-j)}, \qquad U_{-j} = I^{\otimes(j+n-1)} \otimes U \otimes I^{\otimes(n-j)}.
\end{align*}

Then one may take
\begin{align*}
x_j&=U_1^{(1-\e(j,1))/2}U_2^{(1-\e(j,2))/2}\cdots U_{j-1}^{(1-\e(j,j-1))/2}Q_j\\
x_{-j}&=U_1^{(1-\e(-j,1))/2}\cdots U_n^{(1-\e(-j,n))/2}U_{-1}^{(1-\e(-j,-1))/2} \cdots U_{-j+1}^{(1-\e(-j,-j+1))/2}Q_{-j}.
\end{align*}

For a subset $S=\{s_1<s_2<\cdots<s_k\}\subseteq [\pm n]$, write
\[x_S=x_{s_1}x_{s_2}\cdots x_{s_k},\qquad x_\emptyset={\bf 1}. \]
Then $\{x_S:S\subseteq [\pm n]\}$ is a linear basis of $\A(n,\e)$. Define a tracial state $\varphi_{n,\e}$ on $\A(n,\e)$ by
\[\varphi_{n,\e}(x_S)=\delta_{S,\emptyset}, \qquad S\subseteq [\pm n].\]
It induces the inner product
\[\lara{x,y}=\varphi_{n,\e}(x^*y),\qquad x,y\in\A(n,\e).\]
Let $H=L_2(\A(n,\e),\varphi_{n,\e})$. Then $\{x_S:S\subseteq [\pm n]\}$ is an orthonormal basis of $H$. 

For $j\in [\pm n]$, define the left annihilation and creation operators on $H$ by
\[\beta_j(x_S)=
\begin{cases}
x_jx_S, & j\in S,\\
0, & j\notin S,
\end{cases}
\qquad
\beta_j^*(x_S)
=\begin{cases}
0, & j\in S,\\
x_jx_S, & j\notin S.
\end{cases}\]
The operators $\beta_j$ and $\beta_j^*$ satisfy
\begin{align*}
  &\beta_i\beta_j-\e(i,j)\beta_j\beta_i = 0, \\
  &\beta_i^*\beta_j^*-\e(i,j)\beta_j^*\beta_i^*=0,\\
  &\beta_i\beta_j^*-\e(i,j)\beta_j^*\beta_i =\delta_{i,j}I
\end{align*}
for all $i,j\in[\pm n]$.

Given positive parameters $\{\mu_j\}_{j=1}^n$, define the generalized Gaussian operators
\[\gamma_j=\mu_j\beta_{-j}+\mu_j^{-1}\beta_j^*,\qquad 1\leq j\leq n.\]
They satisfy
\begin{equation}\label{CR-gamma}
\begin{cases}
\gamma_i\gamma_j-\e(i,j)\gamma_j\gamma_i=0,
& 1\leq i\neq j\leq n,\\
\gamma_i^*\gamma_j-\e(i,j)\gamma_j\gamma_i^*=0,
& 1\leq i\neq j\leq n,\\
\gamma_i^2=(\gamma_i^*)^2=0,
& 1\leq i\leq n,\\
\gamma_i^*\gamma_i+\gamma_i\gamma_i^*
=(\mu_i^2+\mu_i^{-2}){\rm Id},
& 1\leq i\leq n.
\end{cases}
\end{equation}

Let $\Gamma_{n,\varepsilon}\subseteq B(H)$ be the von Neumann algebra generated by $\gamma_1,\ldots,\gamma_{n}$. The vacuum vector ${\bf 1}\in H$ is cyclic and separating for $\Gamma_{n,\varepsilon}$, and the associated vacuum state on $\Gamma_{n,\e}$ is
\[ \tau_{n,\varepsilon}(X)=\lara{{\bf 1},X{\bf 1}},\qquad X\in\Gamma_{n,\varepsilon}.\]

For $1\leq p<\infty$, we write $L_p(\Gamma_{n,\varepsilon})=L_p(\Gamma_{n,\varepsilon},\tau_{n,\varepsilon})$ for the associated Haagerup $L_p$-space. We refer to \cite{Ha1979,Terp1981,Ko1984,JX2003} for the general theory of Haagerup $L_p$-spaces. Let $D_{n,\e}$ denote the density operator associated with $\tau_{n,\varepsilon}$. Since $\Gamma_{n,\varepsilon}$ is finite-dimensional, $D_{n,\e}$ belongs to $\Gamma_{n,\varepsilon}$.

For $k\in [n]$, we identify $\Gamma_{k,\varepsilon}$ with the von Neumann subalgebra of
$\Gamma_{n,\varepsilon}$ generated by $\gamma_1,\ldots,\gamma_k$. The state $\tau_{k,\e}$ is the restriction of $\tau_{n,\varepsilon}$ to this subalgebra. More generally, for a finite subset $S\subseteq[n]$, we denote by $\Gamma_{S,\varepsilon}$ the von Neumann algebra generated by $\{\gamma_j:j\in S\}$ inside $\Gamma_{n,\varepsilon}$, and by $\tau_{S,\e}$ the restriction of $\tau_{n,\e}$ to $\Gamma_{S,\varepsilon}$. 

We record two structural facts that will be used repeatedly below. They follow from the description of the twisted baby Fock model in \cite{NA2006,LR2011}.

\begin{proposition}\label{prop:basis-gamma}
Let $i\in [n]$ and set $\eta_i= \gamma_i^*\gamma_i-\mu_i^{-2}{\rm Id}$. Then $\{{\rm Id},\gamma_i,\gamma_i^*,\eta_i\}$ forms an orthogonal linear basis of $\Gamma_{\{i\},\varepsilon}$ in $L_2(\Gamma_{\{i\},\e},\tau_{n,\e})$. Moreover, each $X\in\Gamma_{n,\varepsilon}$ admits a unique decomposition
\begin{equation}\label{eq-coor-x}
X= a+b\gamma_i+c\gamma_i^*+d\eta_i, \qquad a,b,c,d \in \Gamma_{[n]\setminus\{i\},\e}.
\end{equation}
\end{proposition}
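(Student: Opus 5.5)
We prove Proposition \ref{prop:basis-gamma} in two stages: first the single-site structure of $\Gamma_{\{i\},\e}$, then the tensor-like decomposition of $\Gamma_{n,\e}$.

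\emph{Single site.} From \eqref{CR-gamma}, $\gamma_i^2=0$ and $\gamma_i^*\gamma_i+\gamma_i\gamma_i^*=(\mu_i^2+\mu_i^{-2}){\rm Id}$. Hence every word in $\gamma_i,\gamma_i^*$ reduces to a linear combination of ${\rm Id},\gamma_i,\gamma_i^*,\gamma_i^*\gamma_i$; for instance $\gamma_i\gamma_i^*\gamma_i=(\mu_i^2+\mu_i^{-2})\gamma_i$. So $\Gamma_{\{i\},\e}$, which is finite-dimensional and equals the unital $*$-algebra generated by $\gamma_i$, is spanned by ${\rm Id},\gamma_i,\gamma_i^*,\eta_i$.

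For orthogonality and independence, we compute the vectors $X{\bf 1}$ in $H$:
\begin{itemize}
\item ${\bf 1}$,
\item $\gamma_i{\bf 1}=\mu_i^{-1}x_i$, since $\beta_{-i}{\bf 1}=0$,
\item $\gamma_i^*{\bf 1}=\mu_i^{-1}x_{-i}$, using $\gamma_i^*=\mu_i\beta_{-i}^*+\mu_i^{-1}\beta_i$,
\item $\gamma_i^*\gamma_i{\bf 1}=\mu_i^{-1}(\mu_i\beta_{-i}^*+\mu_i^{-1}\beta_i)x_i=x_{-i}x_i+\mu_i^{-2}{\bf 1}$, so $\eta_i{\bf 1}=x_{-i}x_i$ up to sign.
\end{itemize}
These are nonzero multiples of the distinct orthonormal vectors ${\bf 1},x_i,x_{-i},x_{\{-i,i\}}$, hence mutually orthogonal. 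Since ${\bf 1}$ is separating, the four operators are linearly independent, and orthogonality in $L_2(\tau_{n,\e})$ follows from $\tau(X^*Y)=\langle X{\bf 1},Y{\bf 1}\rangle$.

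\emph{Decomposition of $\Gamma_{n,\e}$.} Use the $\e$-commutation relations in \eqref{CR-gamma} for $i\neq j$. In any word in $\gamma_j,\gamma_j^*$, $j\in[n]$, we move all letters with index $i$ to the right; each move costs only a sign. Reducing the $i$-part by the single-site step shows that $\Gamma_{n,\e}$ is the span of products $a\cdot u$ with $a\in\Gamma_{[n]\setminus\{i\},\e}$ and $u\in\{{\rm Id},\gamma_i,\gamma_i^*,\eta_i\}$. This gives existence of \eqref{eq-coor-x}.

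For uniqueness, suppose $a+b\gamma_i+c\gamma_i^*+d\eta_i=0$ and apply it to ${\bf 1}$. The elements of $\Gamma_{[n]\setminus\{i\},\e}$ are polynomials in $\beta_{\pm j},\beta^*_{\pm j}$ with $j\neq i$, so they map the span of $\{x_S: S\cap\{\pm i\}=\emptyset\}$ into itself. They also act on $x_S x_T$, with $T\subseteq\{\pm i\}$, by acting on the $x_S$ part up to a sign determined by $\e$. Therefore $a{\bf 1}$, $b\gamma_i{\bf 1}$, $c\gamma_i^*{\bf 1}$, $d\eta_i{\bf 1}$ lie in the four mutually orthogonal subspaces indexed by $T=\emptyset,\{i\},\{-i\},\{-i,i\}$. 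Each must vanish.

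The map $a\mapsto a{\bf 1}$ is injective on $\Gamma_{[n]\setminus\{i\},\e}$ because ${\bf 1}$ is separating, so $a=0$. For $b$, note that $b\gamma_i{\bf 1}=\pm\mu_i^{-1}\sigma(b){\bf 1}\,x_i$, where $\sigma$ is a sign-twisting automorphism, namely $\gamma_j\mapsto\e(i,j)\gamma_j$. Injectivity then gives $b=0$, and $c$ and $d$ are handled identically.

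The main obstacle is this sign bookkeeping: showing that right multiplication by $x_T$ intertwines the action of $\Gamma_{[n]\setminus\{i\},\e}$ with a twisted copy of it. We would verify it directly on the generators $\beta_{\pm j}$, $\beta^*_{\pm j}$ using the relations among the $x_k$. Alternatively, since $\Gamma_{n,\e}$ is finite-dimensional, a dimension count $4^n$ obtained by induction would give uniqueness from spanning.
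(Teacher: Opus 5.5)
Your argument is correct, and it takes a different route from the paper. The paper gives no proof of its own: it defers to the structure of the twisted baby Fock model in \cite{NA2006,LR2011}, essentially the identification of $\Gamma_{n,\e}$ with an $\e$-twisted tensor product of $n$ copies of $M_2$ (cf.\ Proposition \ref{prop-matrix-realization}). You instead verify the statement intrinsically in the spin Fock space. Spanning comes from the relations \eqref{CR-gamma}, by moving the $i$-letters to the right and reducing them at one site. Uniqueness comes from the fact that ${\bf 1}$ is separating, combined with the orthogonal splitting $H=\bigoplus_{T\subseteq\{\pm i\}}V_T$ according to the $\pm i$-content of the basis vectors. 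The citation is shorter and ties the statement to the matrix model used in Section \ref{sec-transference}. Your proof is self-contained, uses only the Fock representation, and gives $\dim\Gamma_{n,\e}=4^n$ by induction as a by-product.

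Two small corrections. First, $\gamma_i^*{\bf 1}=\mu_i x_{-i}$, not $\mu_i^{-1}x_{-i}$, because the $\beta_{-i}^*$ term carries the factor $\mu_i$; this is harmless.

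Second, the ``main obstacle'' largely disappears if you keep $x_T$ on the right. Let $L_{x_k}$ denote left multiplication and $P_k$ the projection onto $\mathrm{span}\{x_S:k\in S\}$. Then $\beta_k=L_{x_k}P_k$ and $\beta_k^*=L_{x_k}(I-P_k)$. For $k\notin T$, right multiplication $\xi\mapsto\xi x_T$ commutes with both $L_{x_k}$ (by associativity) and $P_k$ (since $x_Sx_T=\pm x_{S\triangle T}$ and $k\in S\triangle T\iff k\in S$). Hence, with no twist and no sign,
\[
b\gamma_i{\bf 1}=\mu_i^{-1}(b{\bf 1})x_i,\qquad c\gamma_i^*{\bf 1}=\mu_i(c{\bf 1})x_{-i},\qquad d\eta_i{\bf 1}=(d{\bf 1})x_{-i}x_i .
\]
The twisting automorphism (the $\vartheta_i$ of Lemma \ref{lem-B1}) appears only when $x_i$ is written on the left, as in $b\gamma_i{\bf 1}=\mu_i^{-1}x_i\,\vartheta_i(b){\bf 1}$. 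Your formula, with both $\sigma(b)$ and $x_i$ on the right, therefore carries a spurious twist. The conclusion $b=c=d=0$ is unaffected either way, since right multiplication by $x_T$ is unitary on $H$ and ${\bf 1}$ is separating.
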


\begin{proposition}\label{prop-indep}
\begin{enumerate}[(i)]
\item For $j\in [n]$, the modular automorphism group associated with $\tau_{n,\varepsilon}$ satisfies
\[\sigma_t^{\tau_{n,\varepsilon}}(\gamma_j)=\mu_j^{4it}\gamma_j, \qquad t\in\real. \]

\item Let $S,T\subseteq [n]$ be finite subsets with $S\subseteq T$, and let
$D_{S,\e}$ and $D_{T,\e}$ be the density operators associated with the corresponding restrictions of $\tau_{n,\varepsilon}$. Then the natural inclusion induces an isometric embedding
\[L_p(\Gamma_{S,\varepsilon}) \hookrightarrow L_p(\Gamma_{T,\varepsilon})\hookrightarrow L_p(\Gamma_{n,\e})\]
given on the canonical symmetric embeddings by
\[D_{S,\e}^{1/2p}XD_{S,\e}^{1/2p} \longmapsto D_{T,\e}^{1/2p}XD_{T,\e}^{1/2p} \longmapsto D_{n,\e}^{1/2p}XD_{n,\e}^{1/2p}. \]

\item If $S,T\subseteq [n]$ are finite and $S\cap T=\emptyset$, then $\Gamma_{S,\varepsilon}$ and $\Gamma_{T,\varepsilon}$ are independent with respect to $\tau_{n,\e}$ in the sense that
\[\tau_{n,\e}(cab)=\tau_{n,\e}(a)\tau_{n,\e}(cb),\qquad a\in\Gamma_{S,\varepsilon},\quad
c,b\in\Gamma_{T,\varepsilon}.\]

\item For a finite subset $S\subseteq [n]$ and $j\in S$, $\gamma_j^*\gamma_j$ commutes with
$\Gamma_{S\setminus\{j\}}$ and with $D_{S,\e}$.
\end{enumerate}
\end{proposition}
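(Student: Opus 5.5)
The plan is to make everything explicit through the density operator, proving the four parts in the order (iv), then (i) together with a product formula for the density, then (iii) and (ii).

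\emph{Part (iv) and the spectral projections.} First I prove (iv) directly from \eqref{CR-gamma}. For $i\neq j$, the relations $\gamma_i\gamma_j=\e(i,j)\gamma_j\gamma_i$ and $\gamma_i\gamma_j^*=\e(i,j)\gamma_j^*\gamma_i$ (the adjoint of the second line) give
\[\gamma_j^*\gamma_j\gamma_i=\e(i,j)^2\gamma_i\gamma_j^*\gamma_j=\gamma_i\gamma_j^*\gamma_j,\]
and the same holds with $\gamma_i$ replaced by $\gamma_i^*$. So $\gamma_j^*\gamma_j$ commutes with $\Gamma_{S\setminus\{j\}}$.

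Next, write $c_j=\mu_j^2+\mu_j^{-2}$. Using $\gamma_j^2=0$ and $\gamma_j^*\gamma_j+\gamma_j\gamma_j^*=c_j$, I get $\gamma_j^*\gamma_j\gamma_j^*=c_j\gamma_j^*$ and $\gamma_j\gamma_j^*\gamma_j=c_j\gamma_j$. Hence $p_j:=c_j^{-1}\gamma_j^*\gamma_j$ is a projection, $1-p_j=c_j^{-1}\gamma_j\gamma_j^*$, and
\[\gamma_j=(1-p_j)\gamma_j p_j .\]
By the first paragraph, the $p_j$ ($j\in S$) pairwise commute.

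\emph{The density and part (i).} From $\gamma_j{\bf 1}=\mu_j^{-1}x_j$ and $\gamma_j^*{\bf 1}=\mu_j x_{-j}$, I get
\[\tau(\gamma_j^*\gamma_j)=\mu_j^{-2},\qquad \tau(\gamma_j\gamma_j^*)=\mu_j^2 .\]
I then guess the density
\[D_{S,\e}=\prod_{j\in S}\bigl(\lambda_j p_j+\lambda_j'(1-p_j)\bigr),\]
with constants $\lambda_j,\lambda_j'$ chosen so that the normalized trace of each factor reproduces these two values. In particular $\lambda_j'/\lambda_j=\mu_j^{4}$, up to the orientation convention for $D^{it}$.

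To check $\tau_{S,\e}(X)=\mathrm{Tr}(D_{S,\e}X)$, it suffices by Proposition \ref{prop:basis-gamma}, iterated over the coordinates, to test on products of elements of $\{{\rm Id},\gamma_i,\gamma_i^*,\eta_i\}$ with distinct indices. Both sides vanish unless every factor is ${\rm Id}$ or $\eta_i$. This holds on the $\tau$ side by a vacuum computation, since the creation parts produce orthogonal basis vectors $x_S$. It holds on the trace side because $\gamma_i$ is off-diagonal with respect to $p_i$, and the other factors commute with $p_i$. On diagonal products, both sides factorize.

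Since $\gamma_j$ maps $p_j H$ into $(1-p_j)H$ and commutes with all other factors of $D$, I obtain
\[D^{it}\gamma_jD^{-it}=(\lambda_j'/\lambda_j)^{it}\gamma_j=\mu_j^{4it}\gamma_j,\]
which is (i).

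\emph{Parts (iii) and (ii).} From the product formula, $D_{S\cup T,\e}=D_{S,\e}D_{T,\e}$ with commuting factors. Moreover, $D_{T,\e}$ commutes with $\Gamma_{S,\e}$ by (iv), as do the spectral projections of $D_{T,\e}$. Part (iii) is then reduced to checking on basis monomials: $a$ a monomial in $\Gamma_S$, and $c,b$ monomials in $\Gamma_T$. Moving $a$ to the right past $b$ produces only a global sign, which is harmless because $\tau(a)\neq 0$ forces $a$ to be diagonal, hence even, so the sign is $+1$. The factorization of $\mathrm{Tr}(D_SD_T\,c\,b\,a)$ then follows from the tensor-like splitting of the basis.

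For (ii), $\tau_T$ restricts to $\tau_S$ on $\Gamma_S$, and $\sigma^{\tau_T}$ leaves $\Gamma_S$ invariant by (i). Takesaki's theorem then gives a state-preserving conditional expectation onto $\Gamma_S$. The standard Junge--Xu result yields the isometric embedding of Haagerup $L_p$ spaces, with the stated formula on symmetric embeddings. In finite dimensions this can also be verified directly: $D_T=D_SD_{T\setminus S}$, with $D_{T\setminus S}$ commuting with $\Gamma_S$ and having normalized trace $1$.

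\emph{Main obstacle.} I expect the hardest step to be the sign bookkeeping: showing that the vacuum state genuinely factorizes, despite the $\e$-twisted anticommutation, so that the guessed product density is correct. I would handle it by restricting to diagonal (even) monomials, where all signs cancel.
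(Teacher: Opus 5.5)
Your proof is correct, and it takes a different route from the paper. The paper gives no argument for this proposition. It refers to the structure theory of Nou and Lee--Ricard, whose main output is the identification $(\Gamma_{n,\e},\tau_{n,\e})\cong(M_2^{\otimes n},\psi)$ of Proposition~\ref{prop-matrix-realization}; after that, (ii)--(iv) become statements about a product state on a tensor product. You instead derive everything intrinsically from \eqref{CR-gamma} and the vacuum vector, through the commuting projections $p_j$ and an explicit product density.

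The two pictures agree. $\Theta_{n,\e}(p_j)$ is $e_{11}$ in the $j$-th slot. Your density $D_{S,\e}=\prod_{j\in S}(\lambda_jp_j+\lambda_j'(1-p_j))$, with $\lambda_j=2\mu_j^{-2}/c_j$, $\lambda_j'=2\mu_j^{2}/c_j$ and $c_j=\mu_j^2+\mu_j^{-2}$, is exactly the density of $\psi$ relative to the normalized trace. Your route is self-contained and gives $D_{S,\e}$ explicitly for every $S$ at once. The cited route is shorter but outsources the matrix realization.

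\emph{Step to spell out.} The part that genuinely needs an argument is the trace side, not the vacuum side. You use, without proof, that the normalized trace $\mathrm{Tr}$ on $\Gamma_{S,\e}$ satisfies $\mathrm{Tr}(p_j)=\frac12$ and is multiplicative across distinct indices on the algebra generated by the $p_j$. This is what lies behind ``each factor'', ``both sides factorize'' and the ``tensor-like splitting'' in (ii)--(iii).

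It follows from the trace property alone. If $q$ lies in the algebra generated by $\{p_i:i\neq j\}$, then $q$ commutes with $\gamma_j$ by (iv). Hence $\mathrm{Tr}(\gamma_j^*\gamma_jq)=\mathrm{Tr}(\gamma_jq\gamma_j^*)=\mathrm{Tr}(\gamma_j\gamma_j^*q)$, that is, $\mathrm{Tr}(p_jq)=\frac12\mathrm{Tr}(q)$. Together with your off-diagonal vanishing, this determines every tracial state on the monomial basis. So the normalized trace is unique, $\Gamma_{S,\e}$ is a factor, and $D_{S,\e}$ is unambiguous.

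\emph{Vacuum side.} This is easier than you expect. Since $\eta_i{\bf 1}=x_{-i}x_i$, every basis monomial other than ${\rm Id}$ sends ${\bf 1}$ to a multiple of some $x_R$ with $R\neq\emptyset$. Thus $\tau(\eta_i)=0$, and for a monomial $a$, $\tau(a)\neq0$ already forces $a={\rm Id}$. In particular, (iii) follows from your sign argument together with $\tau(ya)=\tau(y)\tau(a)$ for monomials $y\in\Gamma_{T,\e}$, $a\in\Gamma_{S,\e}$, without using the density at all.

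\emph{Minor point.} In (iv), the commutation of $\gamma_j^*\gamma_j$ with $D_{S,\e}$ is only implicit in your write-up. It is immediate from the product formula.
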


The canonical GNS map $\phi_{n,\e}: \Gamma_{n,\varepsilon} \longrightarrow H$ is given by $\phi_{n,\e}(X)=X{\bf 1}$ for all $X\in \Gamma_{n,\e}$. It extends to an onto isometry from $L_2(\Gamma_{n,\varepsilon},\tau_{n,\varepsilon})$ to $H$.

We also recall the $\e$-Ornstein--Uhlenbeck semigroup considered in \cite{LR2011}. Let
\[N^\e=\sum_{i\in [\pm n]}\beta_i^*\beta_i\in B(H).\]
Then $N^\e(x_S)=|S|x_S$ for all $S\subseteq [\pm n]$. The associated semigroup on $\Gamma_{n,\varepsilon}$ is
\[P_t^\e(X)=\phi_{n,\e}^{-1}e^{-tN^\e}\phi_{n,\e}(X), \qquad t\geq0,\]
and its infinitesimal generator is 
\begin{equation}\label{eq221}
\wt{N}^\e=\phi_{n,\e}^{-1}N^\e\phi_{n,\e}:\Gamma_{n,\e}\longrightarrow \Gamma_{n,\e}.    
\end{equation}

\subsection{$q$-Araki--Woods algebras}\label{sec:prelim-qaw} 

We finally recall the $q$-Fock space and the $q$-Araki--Woods construction. The $q$-Araki--Woods algebras were introduced by Hiai \cite{H2003}; see also \cite{LR2011}. Throughout this subsection, we assume that $-1<q<1$.

Let $(\H,\lara{\cdot,\cdot})$ be a separable Hilbert space. Let $\Omega$ denote the vacuum vector and set
\[\H^{\otimes 0}=\com\Omega.\]
For $n\geq1$, define the $q$-symmetrization operator $P_n:\H^{\otimes n}\to\H^{\otimes n}$ by
\[P_n(f_1\otimes\cdots\otimes f_n)=\sum_{\sigma\in S_n}q^{i(\sigma)} f_{\sigma(1)}\otimes\cdots\otimes f_{\sigma(n)},\]
where $i(\sigma)=\#\{(r,s): 1\leq r<s\leq n,\  \sigma(r)>\sigma(s)\}$. For $\xi\in\H^{\otimes m}$ and $\eta\in\H^{\otimes n}$, set
\[\lara{\xi,\eta}_q=\delta_{m,n}\lara{\xi,P_n\eta}.\]
The operator $P_n$ is strictly positive, and hence this defines an inner product on $\H^{\otimes n}$. We denote the corresponding Hilbert space by $\H^{\otimes_q n}$ and define the $q$-Fock space by
\[\F_q(\H)=\com\Omega\oplus\bigoplus_{n=1}^\infty \H^{\otimes_q n}.\]
We denote by $\F_q^{\rm finite}(\H)$ the subspace spanned by finite tensors.

We first recall some standard facts about second quantization. Let $T:\H\to\K$ be a contraction. Its second quantization is defined on finite tensors by
\[\Gamma(T)(f_1\otimes\cdots\otimes f_n)=T(f_1)\otimes\cdots\otimes T(f_n).\]
Then $\Gamma(T):\mathcal{F}_q(\mathcal{H})\rightarrow \mathcal{F}_q(\mathcal{K})$ extends to a contraction.

For a bounded operator $T\in B(\H)$, its differential second quantization is defined on finite tensors by
\[\d\Gamma(T)(f_1\otimes\cdots\otimes f_n)=\sum_{j=1}^n f_1\otimes\cdots\otimes T(f_j) \otimes\cdots\otimes f_n.\]
In contrast to $\Gamma(T)$, the operator $\d\Gamma(T)$ is in general unbounded on the full Fock space, even when $T$ is a contraction. The number operator on $\F_q(\H)$ is given by $N=\d\Gamma(I_\H)$, and satisfies
\[N(f_1\otimes\cdots\otimes f_k)=k\,f_1\otimes\cdots\otimes f_k, \qquad f_1,\ldots,f_k\in\H.\]
The corresponding $q$-Ornstein--Uhlenbeck semigroup is
\[e^{-tN}=\Gamma(e^{-t}I_\H),\qquad t\geq0.\]

We shall use the following standard properties of second quantization; see, for instance, \cite{A2017}.

\begin{proposition}\label{prop200}
\begin{enumerate}[(a)]
\item If $T:\H\to\K$ and $T':\K\to\W$ are contractions, then
\begin{equation}\label{SQ-eq:1}
\Gamma(T^*)=\Gamma(T)^*, \qquad \Gamma(T'T)=\Gamma(T')\Gamma(T).
\end{equation}

\item If $T_n,T:\H\to\K$ are contractions and $T_n\to T$ strongly, then
\begin{equation}\label{SQ:eq-2}
\Gamma(T_n)\longrightarrow\Gamma(T) \qquad \text{strongly}.
\end{equation}

\item If $\K=\H$ and $T\in B(\H)$ is self-adjoint, then
\begin{equation}\label{SQ:eq-3}
\Gamma(e^{itT})=e^{it\d\Gamma(T)}, \qquad t\in\real.
\end{equation}
If, in addition, $T$ is positive, then
\begin{equation}\label{SQ:eq-4}
\Gamma(e^{-zT})=e^{-z\d\Gamma(T)}, \qquad z\in\com, \quad \Re z\geq0.
\end{equation}
\end{enumerate}
\end{proposition}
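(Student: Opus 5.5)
We prove the three parts of Proposition \ref{prop200} on finite tensors first and then extend by density, using that every operator involved is contractive on $\F_q(\H)$. For (a), we compute on elementary tensors $\xi=f_1\otimes\cdots\otimes f_n\in\H^{\otimes n}$ and $\eta=g_1\otimes\cdots\otimes g_n\in\K^{\otimes n}$:
\[
\lara{\Gamma(T)\xi,\eta}_q=\sum_{\sigma\in S_n}q^{i(\sigma)}\prod_{r=1}^n\lara{Tf_r,g_{\sigma(r)}}
=\sum_{\sigma\in S_n}q^{i(\sigma)}\prod_{r=1}^n\lara{f_r,T^*g_{\sigma(r)}}=\lara{\xi,\Gamma(T^*)\eta}_q .
\]
Here we use that $\lara{\xi,P_n\eta}$ expands as a sum of products of one-particle inner products. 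Orthogonality of distinct tensor degrees handles the case $m\neq n$. This gives $\Gamma(T)^*=\Gamma(T^*)$ on the dense subspace $\F_q^{\rm finite}$, and hence everywhere once $\Gamma(T)$ is known to be bounded, which is the standing fact recalled before the proposition. The multiplicativity $\Gamma(T'T)=\Gamma(T')\Gamma(T)$ holds trivially on elementary tensors and extends by continuity.

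For (b), fix a finite tensor $\xi\in\H^{\otimes n}$. Then
\[
\Gamma(T_k)\xi-\Gamma(T)\xi=\sum_{j=1}^n Tf_1\otimes\cdots\otimes Tf_{j-1}\otimes (T_k-T)f_j\otimes T_kf_{j+1}\otimes\cdots\otimes T_kf_n .
\]
We bound the $q$-norm of a pure tensor by a constant $C_{n,q}=\|P_n\|^{1/2}\le\bigl(\sum_\sigma|q|^{i(\sigma)}\bigr)^{1/2}$ times the product of the factor norms. Each summand therefore tends to $0$. Since $\sup_k\|\Gamma(T_k)\|\le 1$, an $\varepsilon/3$ argument upgrades convergence on finite tensors to strong convergence on all of $\F_q(\H)$.

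For (c), the family $U_t=\Gamma(e^{itT})$ is a one-parameter group of unitaries. By (a), each $U_t$ is unitary and $U_{t+s}=U_tU_s$; by (b), $t\mapsto U_t$ is strongly continuous. Stone's theorem then gives $U_t=e^{itB}$ for a self-adjoint generator $B$. To identify $B$, we differentiate on finite tensors, where the derivative at $t=0$ equals $\d\Gamma(T)$ by the Leibniz rule. This uses boundedness of $T$ together with the norm estimate $C_{n,q}$ on each fixed degree $n$.

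The main obstacle in (c) is ensuring that $B$ equals the closure of $\d\Gamma(T)$, and is not merely an extension of it. We handle this as follows. Each degree space $\H^{\otimes_q n}$ reduces $U_t$, and on that finite-degree space $\d\Gamma(T)$ is bounded. Hence $B$ restricts to $\d\Gamma(T)|_{\H^{\otimes_q n}}$ on each degree, and $B$ is the direct sum of these bounded self-adjoint pieces; this direct sum is precisely the closure of $\d\Gamma(T)$. The same degree-by-degree argument gives \eqref{SQ:eq-4}. On each degree, $\d\Gamma(T)$ is a bounded positive operator, being a sum of commuting positive tensor-type operators; positivity passes to the $q$-inner product because $P_n$ commutes with $T^{\otimes n}$-type operators. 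The function $z\mapsto\Gamma(e^{-zT})$ is holomorphic, and so is $z\mapsto e^{-z\d\Gamma(T)}$, both on each degree. The two agree on $\Re z=0$ by \eqref{SQ:eq-3}, so by the identity theorem they agree on $\Re z\ge0$, and the direct sum over degrees concludes.
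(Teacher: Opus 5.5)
The paper gives no proof of Proposition~\ref{prop200}. It records (a)--(c) as standard facts on second quantization with a pointer to the literature, so there is no argument of the authors to compare against. Your proof is correct and is the standard self-contained argument:
\begin{enumerate}[(i)]
\item for (a), you expand $\lara{\xi,P_n\eta}$ into sums of products of one-particle inner products;
\item for (b), you telescope $T_k^{\otimes n}-T^{\otimes n}$ and use $\sup_k\|\Gamma(T_k)\|\le1$;
\item for (c), you reduce to the degree-$n$ subspaces, on which everything is bounded.
\end{enumerate}
Your identification of the generator with the closure of $\d\Gamma(T)$ on finite tensors is sound, and it is the correct reading of the right-hand side of \eqref{SQ:eq-3}.

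The one genuinely nontrivial input, contractivity of $\Gamma(T)$, is borrowed from the sentence preceding the proposition. It follows from the same commutation you use later: $P_n$ commutes with $(T^*T)^{\otimes n}$, hence for $\xi\in\H^{\otimes n}$ one has $\|\Gamma(T)\xi\|_q^2=\lara{P_n^{1/2}\xi,(T^*T)^{\otimes n}P_n^{1/2}\xi}\le\|\xi\|_q^2$.

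Two steps can be streamlined.
\begin{enumerate}[(i)]
\item Stone's theorem is not needed. On each degree, $t\mapsto(e^{itT})^{\otimes n}$ is norm-differentiable with bounded derivative $i\,\d\Gamma(T)|_{\H^{\otimes_q n}}$ at $0$. So it equals $e^{it\d\Gamma(T)}$ there, and one takes the direct sum over degrees.
\item \eqref{SQ:eq-4} needs no identity theorem. The operators $I^{\otimes(j-1)}\otimes T\otimes I^{\otimes(n-j)}$ commute pairwise, so $(e^{-zT})^{\otimes n}=e^{-z\,\d\Gamma(T)}$ on each degree for every $z\in\com$. Positivity of $T$ is used only to make both sides contractions on all of $\F_q(\H)$ when $\Re z\ge0$.
\end{enumerate}
If you keep the identity-theorem route, state explicitly that both sides are entire on each degree. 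That is what lets agreement on the line $\Re z=0$, which has accumulation points in $\com$, propagate to the whole half-plane.
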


We next recall the one-particle Hilbert space used in the $q$-Araki--Woods construction. Let $(U_t)_{t\in\real}$ be a strongly continuous one-parameter group of orthogonal transformations on $\real^d$. On the complexification $\com^d$, let $A$ be the positive self-adjoint operator satisfying $U_t=A^{it}$ for all $t\in\real$. Define
\begin{equation}\label{hz:inner}
\lara{x,y}_U=\lara{ 2A(1+A)^{-1}x,y}_{\com^d}.
\end{equation}
The corresponding deformed Hilbert space will be denoted by $\real_A^d$. Whenever there is no ambiguity, we write $\lara{\cdot,\cdot}$ in place of $\lara{\cdot,\cdot}_U$. 

For $l\in\nat$, we use the notation
\[\real_A^{ld}= \real_A^d\otimes\com^l.\]
Equivalently, $\real_A^{ld}$ is the deformed Hilbert space associated with the orthogonal group $U_t\otimes{\rm Id}_{\com^l}$, whose positive generator is $A_l=A\otimes{\rm Id}_{\com^l}$.

For $f\in\real^d$, the left $q$-creation operator $a_+(f)\in B(\F_q(\real_A^d))$ is defined by
\begin{align*}
a_+(f)\Omega&=f,\\
a_+(f)(f_1\otimes\cdots\otimes f_n)&=f\otimes f_1\otimes\cdots\otimes f_n,
\end{align*}
and the left $q$-annihilation operator $a_-(f)=a_+(f)^*\in B(\F_q(\real_A^d))$ is given by
\begin{align*}
a_-(f)\Omega&=0,\\
a_-(f)(f_1\otimes\cdots\otimes f_n)&=\sum_{j=1}^n\lara{f,f_j}q^{j-1}f_1\otimes\cdots\otimes\wh{f_j}\otimes\cdots\otimes f_n,
\end{align*}
where $\wh{f_j}$ means that the $j$-th tensor factor is omitted.

For $f\in\real^d$, define the {\it Segal field operator}
\[s(f)=a_+(f)+a_-(f) \in B(\F_q(\real_A^d)).\]
The associated $q$-Araki--Woods algebra is
\[\Gamma_q(\real_A^d)=\lk{s(f):f\in\real^d}'' \subseteq B(\F_q(\real_A^d)).\]
It is equipped with the vacuum state $\f(X)=\lara{\Omega,X\Omega}$ on $\Gamma_q(\real_A^d)$, which is normal and faithful. The modular automorphism group associated with $\f$ is given by
\[\sigma_t^\f(s(f))=s(U_{-t}f),\qquad t\in\real, \quad f\in\real^d;\]
see \cite{H2003,S1997} for more details.

We next recall second quantization at the von Neumann algebra level in the form that will be used below. For each $l\geq1$, let
\[\phi_{ld}: \Gamma_q(\real_A^{ld}) \longrightarrow \F_q(\real_A^{ld}), \qquad \phi_{ld}(X)=X\Omega,\]
denote the canonical GNS map. Then $\phi_{ld}$ extends to a unitary from $L_2(\Gamma_q(\real_A^{ld}))$ to $\F_q(\real_A^{ld})$.

Let $\T:\real^l\to\real^h$ be a contraction, and denote its complexification from $\com^l$ to $\com^h$ by the same symbol. Then $I\otimes\T:\real_A^{ld} \longrightarrow \real_A^{hd}$ is a contraction and satisfies
\[(I\otimes\T)(U_t\otimes {\rm Id}_{l})=(U_t\otimes {\rm Id}_{h})(I\otimes\T), \qquad t\in\real.\]
Hence, by \cite[Proposition 1.1]{H2003}, there exists a unique normal unital completely positive contraction
\[\Gamma_{A_l,A_h}(I\otimes\T):\Gamma_q(\real_A^{ld})\longrightarrow\Gamma_q(\real_A^{hd})\]
such that
\[\Gamma_{A_l,A_h}(I\otimes\T)(X)\Omega=\Gamma(I\otimes\T)(X\Omega),\qquad X\in\Gamma_q(\real_A^{ld}).\]
Equivalently, on $\phi_{ld}^{-1}\big(\F_q^{\rm finite}(\real_A^{ld})\big)$,
\begin{equation}\label{eq230}
\Gamma_{A_l,A_h}(I\otimes\T)=\phi_{hd}^{-1}\circ \Gamma(I\otimes\T)\circ \phi_{ld}.    
\end{equation}
The map $\Gamma_{A_l,A_h}(I\otimes\T)$ preserves the vacuum states and intertwines the corresponding modular automorphism groups. When $h=l$, we simply write
\[\Gamma_{A_l}(I\otimes\T)=\Gamma_{A_l,A_l}(I\otimes\T).\]
The above \eqref{eq230} and Proposition \ref{prop200} (a) imply that 
\begin{align}\label{eq231}
  \Gamma_{A_h,A_l}(I\otimes\T')\circ   \Gamma_{A_l,A_h}(I\otimes\T) =\phi_{ld}^{-1}\Gamma(I\otimes\T')\Gamma(I\otimes\T)\phi_{ld}=\Gamma_{A_l}(I\otimes \T'\T).    
\end{align}
We also record the canonical inclusions that will be used below. Let $1\leq l\leq m$, and let $j:\real^l\longrightarrow\real^m$ be the canonical embedding. Then
\[h_{ld,md}^{-1}=\Gamma_{A_l,A_m}(I\otimes j):\Gamma_q(\real_A^{ld})\longrightarrow\Gamma_q(\real_A^{md})\]
is a state-preserving embedding. On finite tensors, it is characterized by
\begin{equation}\label{eq322}
   \left ( h_{ld,md}^{-1}\circ \phi_{ld}^{-1}\right )\sk{(f_1\otimes g_1)\otimes\cdots\otimes(f_k\otimes g_k)}=\phi_{md}^{-1}\sk{(f_1\otimes j(g_1))\otimes\cdots\otimes(f_k\otimes j(g_k))}.
\end{equation}

We shall also use the standard extension procedure for such maps on Haagerup $L_p$-spaces. For $1\leq p<\infty$, let $L_p(\Gamma_q(\real_A^d))= L_p(\Gamma_q(\real_A^d),\f)$ denote the associated Haagerup $L_p$-space, and let $D_\f$ be the density operator associated with $\f$. Then $D_\f^{1/2p}\Gamma_q(\real_A^d)D_\f^{1/2p}$ is norm dense in $L_p(\Gamma_q(\real_A^d))$. Let $(\N,\psi)$ be a von Neumann algebra equipped with a normal faithful state, and suppose that $\Psi: \Gamma_q(\real_A^d) \longrightarrow \N$ is a normal state-preserving completely positive contraction satisfying
\[\Psi\circ\sigma_t^\f=\sigma_t^\psi\circ\Psi,\qquad t\in\real.\]

Let $D_\f$ and $D_\psi$ denote the density operators associated with $\f$ and $\psi$, respectively. Then the canonical symmetric $L_p$-extension of $\Psi$ is given on the canonical dense subspace by
\[\Psi^{(p)}\left(D_\f^{1/2p}XD_\f^{1/2p}\right)=D_\psi^{1/2p}\Psi(X)D_\psi^{1/2p},\]
and extends to a contraction on the corresponding Haagerup $L_p$-spaces. We refer to \cite{Ha1979,Terp1981,Ko1984,HJX2010,LR2011} for the general framework.

For a simple tensor $f=f_1\otimes\cdots\otimes f_n$ and an index set $I=\{i_1<\cdots<i_l\}$, write
\begin{align*}
  a_+^I(f) &=a_+(f_{i_1})\cdots a_+(f_{i_l}),\\
  a_-^I(f) &=a_-(f_{i_1})\cdots a_-(f_{i_l})
\end{align*}
with the convention $a_\pm^\emptyset(f)=I$. We recall the Wick formula from \cite[Lemma 3.1]{H2003}.

\begin{lemma}\label{lem:qaw-wick}
For arbitrary $f_1,\ldots,f_n\in\real^d$, one has
\begin{equation}\label{lem-Wick}
\phi_d^{-1}\sk{f_1\otimes\cdots\otimes f_n}
=\sum_{\substack{I=\{i_1<\cdots<i_l\},\\
I^c=\{j_1<\cdots<j_m\},\\
I\cup I^c=\{1,\ldots,n\}}}q^{i(I)}a_+^I(f)a_-^{I^c}(f),
\end{equation}
where $i(I)=\#\{(r,s): 1\leq r\leq l,\  1\leq s\leq m,\  i_r>j_s\}$.

\end{lemma}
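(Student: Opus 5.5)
We prove the formula by induction on $n$, in two stages. First we show that the right-hand side of \eqref{lem-Wick}, which we denote $W(f_1,\ldots,f_n)$, belongs to $\Gamma_q(\real_A^d)$. Then we show that $W(f_1,\ldots,f_n)\Omega=f_1\otimes\cdots\otimes f_n$. Since $\Omega$ is cyclic and separating for $\Gamma_q(\real_A^d)$, the map $\phi_d$ is injective. So these two facts identify $W(f_1,\ldots,f_n)$ with $\phi_d^{-1}(f_1\otimes\cdots\otimes f_n)$.

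The second fact is immediate. Every $a_-(f_j)$ annihilates $\Omega$, so in the sum only the term with $I^c=\emptyset$ survives. For that term $i(I)=0$, and
\[a_+(f_1)\cdots a_+(f_n)\Omega=f_1\otimes\cdots\otimes f_n .\]

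For membership in $\Gamma_q(\real_A^d)$, we establish the recursion
\[
W(f_1,\ldots,f_n)=s(f_1)\,W(f_2,\ldots,f_n)-\sum_{k=2}^{n} q^{k-2}\lara{f_1,f_k}\,W(f_2,\ldots,\wh{f_k},\ldots,f_n),
\]
with $W(\emptyset)=I$ and $W(f_1)=s(f_1)$. Here $\lara{\cdot,\cdot}$ is the deformed inner product $\lara{\cdot,\cdot}_U$, which is real on $\real^d$ since $A$ comes from an orthogonal group. Given the recursion, induction shows that $W(f_1,\ldots,f_n)$ is a polynomial in the Segal fields $s(f_j)$, and hence lies in $\Gamma_q(\real_A^d)$.

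To prove the recursion, we expand $s(f_1)W(f_2,\ldots,f_n)=a_+(f_1)W(f_2,\ldots,f_n)+a_-(f_1)W(f_2,\ldots,f_n)$ and treat the two pieces separately.

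\emph{The creation piece.} The part $a_+(f_1)W(f_2,\ldots,f_n)$ gives exactly the terms of $W(f_1,\ldots,f_n)$ with $1\in I$. Indeed, inserting the index $1$ at the front of $I$ creates no new inversions.

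\emph{The annihilation piece.} In each term $a_-(f_1)\,a_+^{I}a_-^{I^c}$, with $I=\{i_1<\cdots<i_l\}\subseteq\{2,\ldots,n\}$, we move $a_-(f_1)$ to the right using the $q$-commutation relation
\[a_-(f)a_+(g)-q\,a_+(g)a_-(f)=\lara{f,g}I.\]
This relation follows directly from the definitions of $a_\pm$ on simple tensors. Passing $a_-(f_1)$ past $a_+(f_{i_1}),\ldots,a_+(f_{i_l})$ produces two kinds of terms:
\begin{itemize}
\item contraction terms $q^{r-1}\lara{f_1,f_{i_r}}\,a_+^{I\setminus\{i_r\}}a_-^{I^c}$, one for each $r$;
\item a final term $q^{l}a_+^{I}a_-(f_1)a_-^{I^c}$.
\end{itemize}
The final term has the index $1$ moved into $I^c$. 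Since $1$ precedes every element of $I$, it acquires exactly $l$ new inversions, so this term is precisely the term of $W(f_1,\ldots,f_n)$ indexed by $I$ with $1\in I^c$. Summing over $I$, the final terms therefore recover all terms of $W(f_1,\ldots,f_n)$ with $1\in I^c$.

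\emph{The bookkeeping (main obstacle).} It remains to show that the contraction terms reassemble into $\sum_{k}q^{k-2}\lara{f_1,f_k}W(f_2,\ldots,\wh{f_k},\ldots,f_n)$. Fix $k\in I$ with $k=i_r$. The contraction carries the power $q^{r-1}$, where $r-1=\#\{i\in I: i<k\}$. Removing $k$ from the index set $\{2,\ldots,n\}$ changes the inversion count by $\#\{j\in I^c: j<k\}$, since these are the inversions in which $k$ (an element of $I$) sits above a smaller element of $I^c$. Adding the two counts gives
\[\#\{i\in I:i<k\}+\#\{j\in I^c:j<k\}=\#\{2,\ldots,k-1\}=k-2,\]
which is exactly the required exponent. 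Once this count is checked, the induction closes and the lemma follows.
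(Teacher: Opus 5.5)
Your proof is correct. The paper does not prove this lemma; it quotes it from Hiai [H2003, Lemma 3.1]. Your route is the standard inductive proof of the Wick formula from the $q$-Gaussian literature, carried over to the Araki--Woods setting. You identify the right-hand side $W$ through $W\Omega=f_1\otimes\cdots\otimes f_n$ and the separating property of $\Omega$. You then get $W\in\Gamma_q(\real_A^d)$ from the recursion $W(f_1\otimes\xi)=s(f_1)W(\xi)-W(a_-(f_1)\xi)$. This is exactly your recursion, because $a_-(f_1)(f_2\otimes\cdots\otimes f_n)=\sum_{k=2}^n q^{k-2}\langle f_1,f_k\rangle\, f_2\otimes\cdots\otimes\wh{f_k}\otimes\cdots\otimes f_n$. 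Your inversion bookkeeping is right: moving $1$ into $I^c$ adds $l$ inversions, and for the contractions $(r-1)+\#\{j\in I^c: j<k\}=k-2$.

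One clause should be deleted: $\langle f,g\rangle_U$ is \emph{not} real for $f,g\in\real^d$ in general. Since $U_t$ is real, complex conjugation $J$ satisfies $JAJ=A^{-1}$. Writing $2A(1+A)^{-1}=I+(A-I)(A+I)^{-1}$ then gives $\langle f,g\rangle_U=\langle f,g\rangle_{\real^d}+\langle (A-I)(A+I)^{-1}f,g\rangle$, where the second term is purely imaginary. For a nontrivial rotation group on $\real^2$ this term is nonzero, and it is exactly what makes the vacuum state non-tracial.

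Your argument never uses realness, so the error is harmless. The relation $a_-(f)a_+(g)-q\,a_+(g)a_-(f)=\langle f,g\rangle_U I$ holds with the complex scalar. A polynomial in the Segal fields with complex coefficients still lies in the von Neumann algebra $\Gamma_q(\real_A^d)$.
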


In particular, $\phi_d^{-1}\sk{\F_q^{\rm finite}(\real_A^d)}$ is $w^*$-dense in $\Gamma_q(\real_A^d)$, and $D_\f^{1/2p}\phi_d^{-1}\sk{\F_q^{\rm finite}(\real_A^d)}D_\f^{1/2p}$ is norm dense in $L_p(\Gamma_q(\real_A^d))$ for $1\leq p<\infty$.

\section{Transference from biased hypercubes to baby Fock models}\label{sec-transference}

\subsection{The biased hypercube as a commutative baby Fock subalgebra} \label{subsec:weighted-baby-embedding}

Let us explain how to embed the biased hypercube $L_{\infty}(\Omega_n,\nu)$ into a commutative subalgebra of a non-tracial baby Fock model $(\Gamma_{n,\e},\tau_{n,\e})$ by a state-preserving $\ast$-homomorphism. 

The first step is to construct an injective $*$-homomorphism $\pi:L_{\infty}(\Omega_n,\nu)\rightarrow M_2^{\otimes n}$. For each $j\in[n]$, we define $\pi_j:L_\infty(\{-1,1\},\nu_j)\longrightarrow M_2(\com)$ given by
\[\pi_j(f) = \begin{pmatrix}
f(1) & 0\\
0 & f(-1)
\end{pmatrix}. \]
Then $\pi=\pi_1\otimes\cdots\otimes\pi_n: L_\infty(\Omega_n,\nu)\longrightarrow M_2(\com)^{\otimes n}$ is an injective $\ast$-homomorphism. Furthermore, if we consider a state $\psi_j$ on $M_2(\com)$ given by
\[\psi_j\left(\begin{bmatrix}
x_{11} & x_{12}\\
x_{21} & x_{22}
\end{bmatrix}\right)
=\frac{\mu_j^{-2}}{\mu_j^{-2}+\mu_j^2}x_{11}+\frac{\mu_j^2}{\mu_j^{-2}+\mu_j^2}x_{22},\]
then it is straightforward to verify $\displaystyle (\psi_j\circ \pi_j)(f)=\int_{\left\{-1,1\right\}}f d\nu_j$ for all $f\in L_\infty(\{-1,1\},\nu_j)$. Hence, 
\[\pi: L_\infty(\Omega_n,\nu)\rightarrow (M_2(\com)^{\otimes n},\psi) \]
is a state-preserving injective $\ast$-homomorphism with respect to the tensor product state $\psi =\psi_1\otimes\cdots\otimes\psi_n$.

The second step is to apply the matrix realization of non-tracial baby Fock models obtained by Lee and Ricard \cite{LR2011}. Indeed, an identification $(M_2^{\otimes n},\psi)\cong (\Gamma_{n,\e},\tau_{n,\e})$  is obtained by iterating \cite[Proposition 3.3]{LR2011} as follows.

\begin{proposition}\label{prop-matrix-realization}
There exists a state-preserving $\ast$-isomorphism $\Theta_{n,\varepsilon}:(\Gamma_{n,\varepsilon},\tau_{n,\varepsilon}) \longrightarrow (M_2^{\otimes n},\psi)$ such that
\[\Theta_{n,\varepsilon}(\eta_j)=I_2^{\otimes(j-1)}\otimes
\begin{bmatrix}
\mu_j^2 & 0\\
0 & -\mu_j^{-2}
\end{bmatrix} \otimes I_2^{\otimes(n-j)}\]
for all $j\in[n]$, where $\eta_j$ is as in Proposition \ref{prop:basis-gamma}.
\end{proposition}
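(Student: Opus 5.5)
The plan is to build $\Theta_{n,\varepsilon}$ explicitly by a Jordan--Wigner type construction. I first treat a single site and then twist the sites together with diagonal sign matrices.

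\emph{One site.} Fix $j$ and set $c_j=\mu_j^2+\mu_j^{-2}$. By \eqref{CR-gamma}, $\gamma_j^2=0$ and $\gamma_j^*\gamma_j+\gamma_j\gamma_j^*=c_j{\rm Id}$. Hence $c_j^{-1/2}\gamma_j$ behaves like the matrix unit $e_{21}$, so I send $\gamma_j\mapsto c_j^{1/2}e_{21}$. Then
\[
\gamma_j^*\gamma_j\mapsto c_je_{11},
\qquad
\eta_j=\gamma_j^*\gamma_j-\mu_j^{-2}{\rm Id}\mapsto {\rm diag}(\mu_j^2,-\mu_j^{-2}),
\]
which is the required image of $\eta_j$. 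For the states, note that $\gamma_j{\bf 1}=\mu_j^{-1}x_j$. This gives
\[
\tau(\gamma_j^*\gamma_j)=\mu_j^{-2}=\psi_j(c_je_{11}),
\qquad
\tau(\gamma_j)=0=\psi_j(e_{21}).
\]
By Proposition \ref{prop:basis-gamma}, $\Gamma_{\{j\},\varepsilon}$ is $4$-dimensional, so this gives a state-preserving $*$-isomorphism $\Gamma_{\{j\},\varepsilon}\cong (M_2,\psi_j)$.

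\emph{Gluing the sites.} Let $U={\rm diag}(1,-1)$, which satisfies $Ue_{21}=-e_{21}U$. I define
\[
\widehat\gamma_j=c_j^{1/2}\,U^{a_{1j}}\otimes\cdots\otimes U^{a_{j-1,j}}\otimes e_{21}\otimes I_2^{\otimes(n-j)},
\qquad a_{ij}=\tfrac{1-\varepsilon(i,j)}{2}.
\]
One checks directly that the $\widehat\gamma_j$ satisfy all of \eqref{CR-gamma}, using the sign choice $a_{ij}$ and the anticommutation of $U$ with $e_{21},e_{12}$. Moreover, $\widehat\gamma_j^*\widehat\gamma_j=c_j\,I^{\otimes(j-1)}\otimes e_{11}\otimes I^{\otimes(n-j)}$, because the $U$-factors square to $I$. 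Hence $\widehat\eta_j$ is exactly the matrix required in the statement.

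The images generate $M_2^{\otimes n}$. I show this by induction on $j$: the diagonal projections $e_{11}$ at sites $i<j$ are already available, hence so is $U=2e_{11}-I$ at those sites. These $U$'s can therefore be stripped off $\widehat\gamma_j$, leaving $e_{21}$ at site $j$. Since $\dim\Gamma_{n,\varepsilon}=4^n$ by iterating Proposition \ref{prop:basis-gamma}, a well-defined surjective $*$-homomorphism $\gamma_j\mapsto\widehat\gamma_j$ is automatically an isomorphism.

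\emph{Main obstacle.} The key step is well-definedness together with state preservation. For this I compare mixed moments:
\[
\tau_{n,\varepsilon}(\gamma_{j_1}^{\sharp}\cdots\gamma_{j_k}^{\sharp})=\psi(\widehat\gamma_{j_1}^{\sharp}\cdots\widehat\gamma_{j_k}^{\sharp})
\]
for all words. On both sides the same commutation relations \eqref{CR-gamma} reorder a word into a signed product of blocks indexed by increasing $j$, with identical signs. On the Fock side, $\tau$ of such an ordered product factorises by the independence in Proposition \ref{prop-indep}(iii). On the matrix side, $\psi$ factorises because it is a product state and the ordered blocks sit in distinct tensor legs, up to $U$-factors that are diagonal and central in the relevant computation. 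The one-site moments agree by the first step.

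Equality of all moments makes $X{\bf 1}\mapsto \widehat X\,\psi^{1/2}$-GNS isometric. Since ${\bf 1}$ is separating and $\psi$ is faithful, the map $X\mapsto\widehat X$ is well defined and injective. It is therefore the desired state-preserving $*$-isomorphism $\Theta_{n,\varepsilon}$. Equivalently, this is the content of iterating \cite[Proposition 3.3]{LR2011} one site at a time.
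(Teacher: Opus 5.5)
Your proof is correct and takes a genuinely different route from the paper. The paper gives no computation: it obtains $\Theta_{n,\varepsilon}$ by iterating the one-step matrix realization \cite[Proposition 3.3]{LR2011} and reads off the image of $\eta_j$.

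You instead write down an explicit Jordan--Wigner model $\widehat\gamma_j=c_j^{1/2}U^{a_{1j}}\otimes\cdots\otimes U^{a_{j-1,j}}\otimes e_{21}\otimes I_2^{\otimes(n-j)}$ and check \eqref{CR-gamma}. You then get well-definedness, injectivity and state preservation together from equality of mixed moments and faithfulness of $\tau_{n,\varepsilon}$ and $\psi$. This gives a self-contained argument with explicit images of all generators. It also shows that $\varepsilon$ enters only through the sign strings, while $\Theta_{n,\varepsilon}(\eta_j)$ does not depend on $\varepsilon$. The price is the moment computation and reliance on Proposition \ref{prop-indep}(iii) for the Fock-side factorization. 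The paper's route is shorter but entirely outsourced.

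One step needs repair. The $U$-factors are not ``central'', and they are not harmless in general: $\psi_i(e_{22}U)=-\psi_i(e_{22})$ and $\psi_i(U)=(\mu_i^{-2}-\mu_i^2)/c_i\neq 1$.

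The correct argument is a parity count. Let $W_j$ be the block of letters with index $j$, $k_j$ its length, and $w_j$ the corresponding word in $e_{21},e_{12}$. Then $\widehat W_1\cdots\widehat W_n$ is the elementary tensor whose $i$-th leg is $c_i^{k_i/2}w_iU^{s_i}$, with $s_i=\sum_{j>i}k_ja_{ij}$.

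If some $k_j$ is odd, $w_jU^{s_j}$ is off-diagonal or zero, so the matrix moment vanishes. Also $\tau_{n,\varepsilon}(W_j)=0$ by your one-site step, so the Fock moment vanishes too.

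If every $k_j$ is even, every $s_i$ is even, so $U^{s_i}=I$. Then $\psi$ genuinely factorizes as $\prod_i c_i^{k_i/2}\psi_i(w_i)=\prod_i\tau_{n,\varepsilon}(W_i)$.

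With this remark in place, the moment comparison, and hence your proof, is complete.
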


We can therefore regard $L_{\infty}(\Omega_n,\nu)$ as a commutative $*$-subalgebra of $\Gamma_{n,\varepsilon}$. More precisely, let us define
\[\Phi
=\Theta_{n,\varepsilon}^{-1}\circ\pi:L_\infty(\Omega_n,\nu)\longrightarrow\Gamma_{n,\varepsilon}.\]
Then $\Phi$ is a state-preserving injective $\ast$-homomorphism. More explicitly, for a simple tensor $f=f_1\otimes\cdots\otimes f_n\in L_{\infty}(\Omega_n,\nu)$ with $f_j\in L_\infty(\{-1,1\},\nu_j)$, we have
\[\Phi(f)=\prod_{j=1}^n \left(\frac{f_j(1)}{\mu_j^2+\mu_j^{-2}}\gamma_j^*\gamma_j+\frac{f_j(-1)}{\mu_j^2+\mu_j^{-2}}\gamma_j\gamma_j^*\right).\]
In particular, for the normalized coordinate functions $\zeta_j$, Proposition \ref{prop-matrix-realization} immediately gives
\begin{equation}\label{eq203}
\Phi(\zeta_j)=\eta_j,\qquad j\in[n].    
\end{equation}

By Proposition \ref{prop-indep}(iv), the operators $\eta_1,\ldots,\eta_n$ commute with one another and with $D_{n,\varepsilon}$. Hence, setting $\eta_\emptyset={\rm Id}$ and $\eta_A=\prod_{j\in A}\eta_j$, we obtain $\Phi(\zeta_A)=\eta_A$ for all $A\subseteq[n]$. Consequently,
\[\Phi\bigl(L_\infty(\Omega_n,\nu)\bigr)= W^*(\eta_1,\ldots,\eta_n) \subseteq \Gamma_{n,\varepsilon}.\]
Thus $\Phi$ identifies the biased hypercube with the commutative von Neumann subalgebra in a
state-preserving way. For every $1\leq p<\infty$, $\Phi$ also induces an isometric embedding $\Phi^{(p)}:L_p(\Omega_n,\nu) \longrightarrow L_p(\Gamma_{n,\varepsilon},\tau_{n,\varepsilon})$ given by $\Phi^{(p)}(f)=D_{n,\varepsilon}^{1/2p}\Phi(f)D_{n,\varepsilon}^{1/2p}$.

The preceding realization concerns only the underlying probability spaces. We next construct the doubled baby Fock model that will be used to handle the baby Fock Riesz transforms.

\bigskip

\subsection{The doubled baby Fock model}\label{subsec:doubled-baby-fock}

We now construct the doubled baby Fock model that will be used in the sequel. Recall from Subsection \ref{sec:prelim-baby} that the $n$-mode spin algebra $\A(n,\e)$ is a $\ast$-subalgebra of $M_2^{\otimes 2n}$ generated by $\{x_j\}_{j\in[\pm n]}$. Our aim is to make explicit how the larger space $M_2^{\otimes 2n}$ can be viewed as a doubled $2n$-mode spin model of $\A(n,\e)$.

For any $\theta\in\real$, let $R_\theta=\begin{bmatrix}
1&0\\
0&e^{i\theta}
\end{bmatrix}$, and define $\alpha_\theta:M_2^{\otimes 2n}\longrightarrow M_2^{\otimes 2n}$ by
\[\alpha_\theta(x)=(R_\theta^{\otimes 2n})^* x R_\theta^{\otimes 2n}, \qquad x\in M_2^{\otimes 2n}.\]
Then $(\alpha_\theta)_{\theta\in\real}$ is a one-parameter group of trace-preserving $\ast$-automorphisms of $M_2^{\otimes 2n}$.

Let $y_j=\alpha_{\pi/2}(x_j)$ for any $j\in[\pm n]$. Then
\begin{align}
x_i y_j-\e(i,j)y_jx_i&=0,
\label{eq320}\\
y_i y_j-\e(i,j)y_jy_i&=2\delta_{i,j}I
\label{eq321}
\end{align}
for all $i,j\in[\pm n]$. Moreover, $M_2^{\otimes 2n}$ is generated by $\{x_j,y_j:j\in[\pm n]\}$, and
\[\alpha_\theta(x_j)=x_j\cos\theta+y_j\sin\theta, \qquad j\in[\pm n].\]

Together with the original commutation relations for the operators $x_j$, the relations \eqref{eq320} and \eqref{eq321} allow us to regard the family $\{x_j,y_j:j\in[\pm n]\}$ as a $2n$-mode spin system. Indeed, extend the choice of signs $\e$ to $\e^{(2)}:[\pm2n]\times[\pm2n] \longrightarrow \{-1,1\}$ by requiring
\[\e(i,j)=\e^{(2)}(i,j)=\e^{(2)}(i,n+j)=\e^{(2)}(n+i,j)=\e^{(2)}(n+i,n+j)\]
for all $i,j\in[n]$, and
\[\e^{(2)}(k,l)=\e^{(2)}(|k|,|l|), \qquad k,l\in[\pm2n].\]
In addition, the matrix algebra $M_2^{\otimes 2n}$ can be identified with the spin algebra $\A(2n,\e^{(2)})$ under the correspondence $\left\{\begin{array}{lll}
x_{\pm j}\in M_2^{\otimes 2n}\longleftrightarrow x_{\pm j}\in \A(2n,\e^{(2)})\\
y_j\in M_2^{\otimes 2n}\longleftrightarrow  x_{n+j}\in \A(2n,\e^{(2)})\\
y_{-j}\in M_2^{\otimes 2n}\longleftrightarrow x_{-(n+j)}\in \A(2n,\e^{(2)})
\end{array} \right .$ for all $j\in [n]$.

Let $\varphi^{(2)}_{n,\e}$ denote the normalized tracial state on $M_{2^{2n}}(\com)$. Its restriction to the spin algebra generated by $\{x_j:j\in[\pm n]\}$ coincides with $\varphi_{n,\e}$. For $S,T\subseteq[\pm n]$, write $x_S=x_{s_1}\cdots x_{s_k}$ and $y_T=y_{t_1}\cdots y_{t_l}$, whenever $S=\{s_1<\cdots<s_k\}$ and $T=\{t_1<\cdots<t_l\}$, with $x_\emptyset=y_\emptyset={\bf 1}$. Then $\{x_Sy_T:S,T\subseteq[\pm n]\}$ is an orthonormal basis of
\[H^{(2)}=L_2(M_{2^{2n}}(\com),\varphi^{(2)}_{n,\e}).\]

We use the same notation $\beta_j,\beta_j^*$ for the annihilation and creation operators associated with the first coordinate family on $H^{(2)}$. Thus, for $j\in[\pm n]$,
\[\beta_j(x_Sy_T)=\begin{cases}
x_jx_Sy_T,&j\in S,\\
0,&j\notin S,
\end{cases}
\qquad
\beta_j^*(x_Sy_T)=\begin{cases}
0,&j\in S,\\
x_jx_Sy_T,&j\notin S.
\end{cases}\]
For $j\in[n]$, define the annihilation and creation operators associated
with the second coordinate family by
\[\beta_{n+j}(x_Sy_T)=\begin{cases}
y_jx_Sy_T,&j\in T,\\
0,&j\notin T,
\end{cases}
\qquad
\beta_{n+j}^*(x_Sy_T)=\begin{cases}
0,&j\in T,\\
y_jx_Sy_T,&j\notin T,
\end{cases}\]
and
\[\beta_{-(n+j)}(x_Sy_T)=\begin{cases}
y_{-j}x_Sy_T,&-j\in T,\\
0,&-j\notin T,
\end{cases}
\qquad
\beta_{-(n+j)}^*(x_Sy_T)=\begin{cases}
0,&-j\in T,\\
y_{-j}x_Sy_T,&-j\notin T.
\end{cases}\]

We set $\mu_{n+j}=\mu_j$, and define
\begin{align*}
  &\gamma_j=\mu_j\beta_{-j}+\mu_j^{-1}\beta_j^*\\
  &\gamma_{n+j}=\mu_j\beta_{-n-j}+\mu_j^{-1}\beta_{n+j}^*
\end{align*}
for all $j\in[n]$. We then define the doubled baby Fock algebra by
\[\Gamma_{n,\e}^{(2)}=\{ \gamma_1,\ldots,\gamma_{2n} \}'' \subseteq B(H^{(2)}),\]
and equip it with the vacuum state $\tau_{n,\e}^{(2)}$ on $\Gamma_{n,\e}^{(2)}$ given by $\tau_{n,\e}^{(2)}(X)=\lara{{\bf 1},X{\bf 1}}$. Let $D_{n,\e}^{(2)}$ denote the density operator associated with $\tau_{n,\e}^{(2)}$, and let $\phi_{n,\e}^{(2)}:\Gamma_{n,\e}^{(2)} \longrightarrow H^{(2)}$ be the canonical GNS map given by $\phi_{n,\e}^{(2)}(X)=X{\bf 1}$. Identifying $H$ with the subspace of $H^{(2)}$ spanned by $\{x_S:S\subseteq[\pm n]\}$, we have $\phi_{n,\e}^{(2)}(X)=\phi_{n,\e}(X)$ for all $X\in\Gamma_{n,\e}$.

Under the above correspondence, the doubled space $(\Gamma_{n,\e}^{(2)},\tau_{n,\e}^{(2)})$ is naturally identified with  the non-tracial baby Fock model $(\Gamma_{2n,\e^{(2)}},\tau_{2n,\e^{(2)}})$ associated with $\e^{(2)}$ and the weight parameters $\mu_1,\ldots,\mu_n,\mu_1,\ldots,\mu_n$.

The von Neumann subalgebra of $\Gamma_{n,\e}^{(2)}$ generated by $\gamma_1,\ldots,\gamma_n$ is naturally identified with $\Gamma_{n,\e}$, and $\tau_{n,\e}^{(2)}\Bigr|_{\Gamma_{n,\e}}= \tau_{n,\e}$.  We shall therefore regard $(\Gamma_{n,\e},\tau_{n,\e})\subseteq(\Gamma_{n,\e}^{(2)},\tau_{n,\e}^{(2)})$ as an inclusion of noncommutative probability spaces. 

By Proposition \ref{prop-indep}(ii), applied to the doubled model, for every $1\leq p<\infty$ this inclusion induces an isometric embedding
\[L_p(\Gamma_{n,\e},\tau_{n,\e}) \hookrightarrow L_p(\Gamma_{n,\e}^{(2)},\tau_{n,\e}^{(2)})\]
given on the canonical symmetric embeddings by
\[D_{n,\e}^{1/2p}XD_{n,\e}^{1/2p}\longmapsto (D_{n,\e}^{(2)})^{1/2p}X(D_{n,\e}^{(2)})^{1/2p}.\]

The second coordinate family $\gamma_{n+1},\ldots,\gamma_{2n}$ will be used in the next subsection to assemble the baby Fock Riesz transforms into an operator on the doubled model.

\bigskip

\subsection{Riesz transforms on the biased hypercube and their baby Fock realization}
\label{subsec:riesz-transference}

We now introduce the Riesz transforms on the biased hypercube and relate them explicitly to the corresponding Riesz transforms on the baby Fock model. Recall that $\{\zeta_A:A\subseteq[n]\}$ is an orthonormal basis of $L_2(\Omega_n,\nu)$ and that, by the multiplicativity of $\Phi$ and \eqref{eq203}, the state-preserving injective unital $\ast$-homomorphism $\Phi:L_\infty(\Omega_n,\nu)\hookrightarrow\Gamma_{n,\e}$ satisfies $\Phi(\zeta_A)=\eta_A$ for all $A\subseteq[n]$.

\begin{definition}
For each $j\in[n]$, we define the annihilation operator $D_j\in B(L_2(\Omega_n,\nu))$ by
\begin{equation}\label{eq205}
D_j(\zeta_A)=\begin{cases}
\zeta_{A\setminus\{j\}},&j\in A,\\
0,&j\notin A.
\end{cases}
\end{equation}
\end{definition}

\begin{remark}
The adjoint $D_j^*$ is explicitly given by
\begin{equation}\label{eq206}
D_j^*(\zeta_A)=\begin{cases}
0,&j\in A,\\
\zeta_j\zeta_A,&j\notin A.
\end{cases}
\end{equation}
Furthermore, by \eqref{eq204}, \eqref{eq205}, and \eqref{eq206}, we have $D_j^*D_j=I-E_j$, where $E_j$ is the marginal conditional expectation. Thus, $\displaystyle \sum_{j\in [n]}D_j^*D_j$ coincides with the number operator $N=\displaystyle \sum_{j\in [n]}(I-E_j)$ discussed in Subsection \ref{sec:prelim-weighted}. Recall that $N$ is the infinitesimal generator of the Orenstein-Uhlenbeck semigroup $P_t = e^{-tN}$ studied in \cite{E2023,E2024}.
\end{remark}

For $1\leq p\leq\infty$, we denote by $L_p^0(\Omega_n,\nu)=\left\{ f\in L_p(\Omega_n,\nu):\int_{\Omega_n}f\,d\nu=0\right\}$ the mean-zero subspace. Then $N^{-1/2}$ is naturally defined on $L_{\infty}^0(\Omega_n,\nu)$.

\begin{definition}
For $j\in[n]$, we define the $j$-th Riesz transform on $(\Omega_n,\nu)$ by
\begin{equation}\label{eq207}
R_j=\frac{\mu_j^{-2}+\mu_j^2}{2}D_jN^{-1/2}:L_\infty^0(\Omega_n,\nu)\longrightarrow L_\infty(\Omega_n,\nu).
\end{equation}
\end{definition}

\begin{remark}
For any non-empty $A\subseteq[n]$,
\[R_j(\zeta_A)=\begin{cases}
\displaystyle \frac{\mu_j^{-2}+\mu_j^2}{2\sqrt{|A|}}
\zeta_{A\setminus\{j\}},
&j\in A,\\[8pt]
0,
&j\notin A.
\end{cases}.\]
\end{remark}

We next introduce the Riesz transforms $R_{\pm j}^{\e}$ on the baby Fock model $\Gamma_{n,\e}$. Let us denote by $\Gamma_{n,\e}^0=\left\{X\in\Gamma_{n,\e}: \tau_{n,\e}(X)=0\right\}$. Recall that we defined the number operator $N^{\e}\in B(H)$ and $\wt{N}^{\e}=\phi_{n,\e}^{-1}\circ N^{\e}\circ \phi_{n,\e}\in B(\Gamma_{n,\e})$ in Subsection \ref{sec:prelim-baby}.

\begin{definition}
For $j\in[\pm n]$, we define the $j$-th partial derivative on $\Gamma_{n,\e}$ by
\[\partial_j^\e=\phi_{n,\e}^{-1}\circ\beta_j\circ\phi_{n,\e}:\Gamma_{n,\e}\longrightarrow\Gamma_{n,\e},\]
and the $j$-th Riesz transform by
\begin{equation}\label{eq208}
R_j^\e=\partial_j^\e\circ (\wt{N}^\e)^{-1/2}=\phi_{n,\e}^{-1}\circ\beta_j(N^\e)^{-1/2}\circ\phi_{n,\e}:\Gamma_{n,\e}^0\longrightarrow\Gamma_{n,\e}.
\end{equation}
\end{definition}

\begin{remark}
The definition of the Riesz transforms $R_j^\e$ is natural since it is compatible with the carr\'e du champ associated with the $\e$-Ornstein--Uhlenbeck semigroup as in the classical Euclidean spaces $\real^n$. Let
\[\Gamma_P(X,Y)=\frac12 \left(X^*\wt N^\e(Y)+\wt N^\e(X)^*Y-\wt N^\e(X^*Y)\right),\qquad X,Y\in\Gamma_{n,\e}.\]
Then
\begin{equation}\label{eq:baby-Gamma}
\Gamma_P(X,Y)=\sum_{j\in[\pm n]}\partial_j^\e(X)^*\partial_j^\e(Y), \qquad X,Y\in\Gamma_{n,\e}.
\end{equation}
In particular, for any $X\in\Gamma_{n,\e}^0$,
\begin{equation}\label{eq:baby-Riesz-Gamma}
\Gamma_P\left((\wt N^\e)^{-1/2}X,(\wt N^\e)^{-1/2}X\right)=\sum_{j\in[\pm n]}|R_j^\e(X)|^2.
\end{equation}
Thus the Riesz transforms $R_j^\e$ admit the same coordinate carr\'e-du-champ interpretation as the classical Riesz transforms. The proof of \eqref{eq:baby-Gamma} is given in Appendix \ref{appendix-B}.

\end{remark}

Before comparing the two families of Riesz transforms directly, we first determine explicitly how the number operator $\wt{N}^\e$ and the partial derivatives $\partial_{\pm j}^\e$ act on the commutative $\ast$-subalgebra $\Phi(L_\infty(\Omega_n,\nu))$. These identities will provide the link between the Riesz transforms $R_j$ on the biased hypercube and $R_{\pm j}^\e$ on the baby Fock model.

\begin{proposition}\label{prop300}
For any $A\subseteq[n]$ and $j\in[n]$, we have
\begin{align}
\label{eq209.5} \wt{N}^\e(\eta_A)&=2|A|\eta_A,\\
\label{eq210} \partial_j^\e(\eta_A)&=
\begin{cases}
-\mu_j^{-1}\gamma_j^*\eta_{A\setminus\{j\}},
&j\in A,\\
0,
&j\notin A,
\end{cases}\\
\label{eq211}
\partial_{-j}^\e(\eta_A)&=
\begin{cases}
\mu_j\gamma_j\eta_{A\setminus\{j\}},
&j\in A,\\
0,
&j\notin A.
\end{cases}
\end{align}
Consequently, for any $f\in L_\infty(\Omega_n,\nu)$, we have
\begin{equation}\label{eq212}
\left\{\begin{array}{lll}
\wt{N}^\e(\Phi(f)) = 2 \Phi(Nf)\\
 \partial_j^\e(\Phi(f))=-\mu_j^{-1}\gamma_j^*\Phi(D_jf)\\
\partial_{-j}^\e(\Phi(f))=\mu_j\gamma_j\Phi(D_jf)
\end{array} \right . .
\end{equation}

\end{proposition}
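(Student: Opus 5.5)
The plan is to verify all three identities at the level of the GNS space $H$. The maps $\wt N^\e$ and $\partial_{\pm j}^\e$ are defined by conjugating $N^\e$ and $\beta_{\pm j}$ with the GNS map $\phi_{n,\e}$, and $\phi_{n,\e}$ is injective on $\Gamma_{n,\e}$ because ${\bf 1}$ is separating. So it suffices to compute the vectors $\eta_A{\bf 1}\in H$ explicitly, apply $N^\e$ and $\beta_{\pm j}$ to them, and recognize the results as $X{\bf 1}$ for the claimed operators $X$.

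\textbf{Step 1: compute $\eta_A{\bf 1}$.} We show by induction on $|A|$ that
\[
\eta_A{\bf 1}=\prod_{k\in A}(x_{-k}x_k),
\]
taken in increasing order of $k$. This vector is $\pm x_{S_A}$ with $S_A=\bigcup_{k\in A}\{-k,k\}$, so it has degree $2|A|$.

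Fix $j\notin B$ and set $R=\eta_B{\bf 1}$, which by induction is a product of the pairs $x_{-k}x_k$ with $k\in B$. Neither $x_j$ nor $x_{-j}$ occurs in $R$.
\begin{itemize}
\item $\beta_{-j}R=0$ and $\beta_j^*R=x_jR$, so $\gamma_jR=\mu_j^{-1}x_jR$.
\item Next, $\gamma_j^*=\mu_j\beta_{-j}^*+\mu_j^{-1}\beta_j$. Using $x_j^2=I$ from \eqref{hz:index}, we get $\gamma_j^*\gamma_jR=x_{-j}x_jR+\mu_j^{-2}R$.
\item Hence $\eta_jR=x_{-j}x_jR$.
\end{itemize}
Each even pair $x_{-k}x_k$ commutes with every other pair, since the sign picked up is a product of four values $\e(\cdot,\cdot)$ that equal one another, hence $+1$. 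The same fact, together with Proposition~\ref{prop-indep}(iv), lets us reorder freely. Since $N^\e$ multiplies $x_S$ by $|S|$, \eqref{eq209.5} follows immediately.

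\textbf{Step 2: the partial derivatives.} For $j\in A$, write $\eta_A{\bf 1}=x_{-j}x_jR'$ with $R'=\eta_{A\setminus\{j\}}{\bf 1}$. The operator $\beta_j$ acts as left multiplication by $x_j$ on basis vectors containing $j$. Since $\e(j,-j)=\e(j,j)=-1$, we have $x_jx_{-j}=-x_{-j}x_j$, and therefore
\[
\beta_j\eta_A{\bf 1}=x_jx_{-j}x_jR'=-x_{-j}R'.
\]
On the other side, the computation from Step 1 gives
\[
-\mu_j^{-1}\gamma_j^*R'=-\mu_j^{-1}\bigl(\mu_jx_{-j}R'+0\bigr)=-x_{-j}R',
\]
because $\beta_jR'=0$. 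This proves \eqref{eq210}.

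Similarly,
\[
\beta_{-j}\eta_A{\bf 1}=x_{-j}x_{-j}x_jR'=x_jR'=\mu_j\gamma_jR',
\]
which gives \eqref{eq211}. When $j\notin A$, the vector $\eta_A{\bf 1}$ contains neither $x_j$ nor $x_{-j}$, so $\beta_{\pm j}$ annihilates it.

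\textbf{Step 3: the consequences \eqref{eq212}.} These follow by linearity from $\Phi(\zeta_A)=\eta_A$, $N\zeta_A=|A|\zeta_A$, and $D_j\zeta_A=\zeta_{A\setminus\{j\}}$ for $j\in A$ (and $D_j\zeta_A=0$ for $j\notin A$). Indeed, every $f\in L_\infty(\Omega_n,\nu)$ is a finite combination of the $\zeta_A$.

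The main obstacle is the sign bookkeeping: checking that the ordered product of pairs is really $\pm$ a basis vector whose reordering signs are harmless, and getting the anticommutation sign $x_jx_{-j}=-x_{-j}x_j$ right. That sign is exactly what produces the minus sign in \eqref{eq210}.
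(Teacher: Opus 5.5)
Your proof is correct and follows essentially the same route as the paper. Like the paper, you write $\phi_{n,\e}(\eta_A)=x_{-j}x_j\,\phi_{n,\e}(\eta_{A\setminus\{j\}})$, apply $\beta_{\pm j}$ using $x_jx_{-j}=-x_{-j}x_j$ and $x_{\pm j}^2=I$, match the results with $\gamma_j^*\eta_B{\bf 1}=\mu_jx_{-j}\eta_B{\bf 1}$ and $\gamma_j\eta_B{\bf 1}=\mu_j^{-1}x_j\eta_B{\bf 1}$, and finish by linearity; your inductive computation of $\eta_A{\bf 1}$ as a product of commuting pairs $x_{-k}x_k$ only makes explicit what the paper asserts in one line.
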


\begin{proof}
It is enough to verify the identities on the basis elements $\eta_A$. First, $\phi_{n,\e}(\eta_A)$ is a homogeneous spin word of degree $2|A|$. Hence $N^\e\phi_{n,\e}(\eta_A)=2|A|\phi_{n,\e}(\eta_A)$, or equivalently,
\[\wt{N}^\e(\eta_A)=(\phi_{n,\e}^{-1}\circ N^\e\circ\phi_{n,\e})(\eta_A)=2|A|\eta_A.\]

We next compute the partial derivatives. Let $j\in A$ and set $B=A\setminus\{j\}$. Since $\eta_B$ involves neither the $j$-th nor the $(-j)$-th spin coordinate, we have
\[\phi_{n,\e}(\eta_A)=x_{-j}x_j\phi_{n,\e}(\eta_B).\]
Using $\e(j,-j)=-1$, we obtain
\[\beta_j\phi_{n,\e}(\eta_A)=\beta_j\bigl(x_{-j}x_j\phi_{n,\e}(\eta_B)\bigr)=-x_{-j}\phi_{n,\e}(\eta_B).\]
On the other hand, since $\gamma_j^*\phi_{n,\e}(\eta_B)=\mu_jx_{-j}\phi_{n,\e}(\eta_B)$, we obtain
\[\partial_j^\e(\eta_A)=-\mu_j^{-1}\gamma_j^*\eta_B,\]
which proves \eqref{eq210} when $j\in A$.

Similarly, $\beta_{-j}\phi_{n,\e}(\eta_A)=x_j\phi_{n,\e}(\eta_B)$, while $\gamma_j\phi_{n,\e}(\eta_B)=\mu_j^{-1}x_j\phi_{n,\e}(\eta_B)$. Thus, we obtain
\[\partial_{-j}^\e(\eta_A)=\mu_j\gamma_j\eta_B,\]
which proves \eqref{eq211} when $j\in A$. If $j\notin A$, both $\beta_j$ and $\beta_{-j}$ vanish on $\phi_{n,\e}(\eta_A)$. Thus, we establish \eqref{eq210} and \eqref{eq211}.

Finally, since $\Phi(\zeta_A)=\eta_A$, while $N(\zeta_A)=|A|\zeta_A$ and $D_j$ is given by \eqref{eq205}, the three identities in \eqref{eq212} follow by linearity.
\end{proof}

A direct consequence of Proposition \ref{prop300} is the following proposition, which gives the precise relation between the biased-hypercube Riesz transform $R_j$ and baby Fock Riesz transforms $R_{\pm j}^\e$.

\begin{proposition}\label{prop:Riesz-transference}
Let $f\in L_\infty^0(\Omega_n,\nu)$ and $j\in[n]$. Then
\begin{align}
\label{eq215} &R_j^\e(\Phi(f))=-\frac{\sqrt{2}\mu_j^{-1}}{\mu_j^2+\mu_j^{-2}}\gamma_j^*\Phi(R_jf),\\
\label{eq216} &R_{-j}^\e(\Phi(f))=\frac{\sqrt{2}\mu_j}{\mu_j^2+\mu_j^{-2}}
\gamma_j\Phi(R_jf).
\end{align}

\end{proposition}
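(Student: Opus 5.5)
It suffices to verify both identities on the orthonormal basis, since both sides are linear in $f$ and $L_\infty^0(\Omega_n,\nu)$ is spanned by $\{\zeta_A : \varnothing\neq A\subseteq[n]\}$. So I fix a non-empty $A\subseteq[n]$ and take $f=\zeta_A$, so that $\Phi(f)=\eta_A$. I will work at the level of $\Gamma_{n,\e}$ (bounded operators), because $\Phi$ maps mean-zero functions into $\Gamma^0_{n,\e}$: it is state-preserving, so $\tau_{n,\e}(\eta_A)=\int\zeta_A\,d\nu=0$.

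The first step computes the left-hand side. By \eqref{eq209.5}, $\eta_A$ is an eigenvector of $\wt N^\e$ with eigenvalue $2|A|$, so
\[
(\wt N^\e)^{-1/2}\eta_A=(2|A|)^{-1/2}\eta_A .
\]
Equivalently, $\phi_{n,\e}(\eta_A)$ lies in the $2|A|$-eigenspace of $N^\e$, so applying $(N^\e)^{-1/2}$ before $\beta_{\pm j}$ is legitimate. Combining this with \eqref{eq210} and \eqref{eq211} gives, for $j\in A$,
\[
R_j^\e(\eta_A)=-\frac{\mu_j^{-1}}{\sqrt{2|A|}}\gamma_j^*\eta_{A\setminus\{j\}},
\qquad
R_{-j}^\e(\eta_A)=\frac{\mu_j}{\sqrt{2|A|}}\gamma_j\eta_{A\setminus\{j\}} .
\]
Both expressions vanish when $j\notin A$.

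The second step computes the right-hand side using the Remark after \eqref{eq207}. For $j\in A$,
\[
\Phi(R_j\zeta_A)=\frac{\mu_j^{-2}+\mu_j^2}{2\sqrt{|A|}}\,\eta_{A\setminus\{j\}},
\]
and this vanishes when $j\notin A$. Substituting into \eqref{eq215}, the factor $\mu_j^2+\mu_j^{-2}$ cancels, leaving
\[
-\frac{\sqrt2\,\mu_j^{-1}}{2\sqrt{|A|}}\gamma_j^*\eta_{A\setminus\{j\}}
=-\frac{\mu_j^{-1}}{\sqrt{2|A|}}\gamma_j^*\eta_{A\setminus\{j\}},
\]
which matches the first computation. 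The same cancellation in \eqref{eq216} gives $\frac{\mu_j}{\sqrt{2|A|}}\gamma_j\eta_{A\setminus\{j\}}$, which also matches. When $j\notin A$, both sides are zero.

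The argument is essentially a bookkeeping consequence of Proposition \ref{prop300}. The one point needing care is the factor $2$ in \eqref{eq212}: $\wt N^\e\circ\Phi=2\,\Phi\circ N$, which gives $(\wt N^\e)^{-1/2}\Phi=\tfrac{1}{\sqrt2}\Phi N^{-1/2}$ on mean-zero functions. This is what produces the $\sqrt2$ in place of $2$ in the constants.

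As a more conceptual alternative to the basis computation, one can write
\[
R_j^\e\Phi(f)=\partial_j^\e\,\tfrac{1}{\sqrt2}\Phi(N^{-1/2}f)=-\tfrac{\mu_j^{-1}}{\sqrt2}\gamma_j^*\Phi(D_jN^{-1/2}f),
\]
using \eqref{eq212} and the fact that $N^{-1/2}f$ is again mean zero. Then substituting $D_jN^{-1/2}=\frac{2}{\mu_j^{-2}+\mu_j^2}R_j$ yields \eqref{eq215}, and the same argument with $\partial_{-j}^\e$ yields \eqref{eq216}.
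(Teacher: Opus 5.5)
Your proposal is correct, and your ``conceptual alternative'' is exactly the paper's proof. The paper first derives $(\wt N^\e)^{-1/2}\Phi(f)=\tfrac{1}{\sqrt2}\Phi(N^{-1/2}f)$ from \eqref{eq212}, then applies the formulas there for $\partial_{\pm j}^\e\circ\Phi$ and substitutes $D_jN^{-1/2}=\tfrac{2}{\mu_j^{-2}+\mu_j^2}R_j$; your basis computation on each $\zeta_A$ is just this same bookkeeping carried out before passing to linear combinations.
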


\begin{proof}
By \eqref{eq212}, we obtain
\begin{equation}\label{eq214}
(\wt{N}^\e)^{-1/2}(\Phi(f))=\frac{1}{\sqrt{2}}\Phi(N^{-1/2}f)
\end{equation}
for all $f\in L_\infty^0(\Omega_n,\nu)$. Then, by \eqref{eq214} and \eqref{eq212},
\[R_j^\e(\Phi(f))=\partial_j^\e (\wt{N}^\e)^{-1/2} (\Phi(f))=-\frac{\mu_j^{-1}}{\sqrt{2}}
\gamma_j^*\Phi(D_jN^{-1/2}f)=-\frac{\sqrt{2}\mu_j^{-1}}{\mu_j^2+\mu_j^{-2}}\gamma_j^*\Phi(R_jf).\]
Similarly, we obtain
\[R_{-j}^\e(\Phi(f))=\partial_{-j}^\e (\wt{N}^\e)^{-1/2} (\Phi(f))=\frac{\mu_j}{\sqrt{2}}
\gamma_j\Phi(D_jN^{-1/2}f)=\frac{\sqrt{2}\mu_j}{\mu_j^2+\mu_j^{-2}}
\gamma_j\Phi(R_jf).\]

\end{proof}

\bigskip

\section{Dimension-free Riesz transform estimates on baby Fock models}
\label{sec:baby Fock}

In this section, we define a dilated Riesz transform $\mathcal{R}_n$ and establish the dimension-free $L_p$-estimate for $\mathcal R_n$. The proof follows the spirit of the rotation method of Pisier and Lust-Piquard. The main ingredients are a kernel representation of $\mathcal R_n$ with the decomposition $\mathcal{R}_n=\wt{\Pi}^{(n)}\circ T_n$, and $L^p$-estimates for each of $\wt{\Pi}^{(n)}$ and $T_n$ using the $L^p$-estimates for the vector-valued Hilbert transform of $\mathbb T$ and Stein's complex interpolation theorem.

For $1\leq p<\infty$, let $L_p^0(\Gamma_{n,\e})= \overline{ \left\{D_{n,\e}^{1/2p}XD_{n,\e}^{1/2p}:
X\in\Gamma_{n,\e}^0 \right\} }^{\| \cdot \|_{L_p(\Gamma_{n, \e})} }$. Then the main result of this section for its $L_p$-extension $\mathcal R_n^{(p)}:L_p^0(\Gamma_{n,\e})\longrightarrow L_p(\Gamma_{n,\e}^{(2)})$ is the following.

\begin{thm}\label{Riesz-Baby-Fock}
Let $1<p<\infty$. There exists a constant $C_p>0$, depending only on
$p$, such that
\[C_p^{-1}\|X\|_{L_p(\Gamma_{n,\e})}\leq \|\mathcal R_n^{(p)}(X)\|_{L_p(\Gamma_{n,\e}^{(2)})} \leq C_p \|X\|_{L_p(\Gamma_{n,\e})}\]
for every $X\in L_p^0(\Gamma_{n,\e})$. In particular, the constant $C_p$ is independent of $n$, the choice of signs $\e$, and the parameters $\mu_1,\ldots,\mu_n$.
\end{thm}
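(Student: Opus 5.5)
The plan is to realize $\mathcal R_n$ through the rotation group $(\widetilde\alpha_\theta)_{\theta\in\real}$ on the doubled model, following Pisier's Gaussian argument and Lust-Piquard's spin version. First, I would check that $\widetilde\alpha_\theta$ is a $\tau^{(2)}_{n,\e}$-preserving $\ast$-automorphism of $\Gamma^{(2)}_{n,\e}$ that commutes with the modular group. It acts on generators by $\gamma_j\mapsto \gamma_j\cos\theta+\gamma_{n+j}\sin\theta$, and since $\mu_{n+j}=\mu_j$ the modular group scales both terms by the same factor $\mu_j^{4it}$. Hence its symmetric extensions $\widetilde\alpha^{(p)}_\theta$ are isometries of $L_p(\Gamma^{(2)}_{n,\e})$ for every $1\le p<\infty$. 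I would identify $\mathcal R_n$ as $\delta\circ(\widetilde N^\e)^{-1/2}$, where $\delta=\frac{d}{d\theta}\widetilde\alpha_\theta|_{\theta=0}$ replaces, one at a time, a first-family letter by its second-family copy. On a word $x_S$ of degree $k$, $\delta(x_S)$ is the degree-$(1)$ part in the $y$-variables of $\alpha_\theta(x_S)=\prod_s(x_s\cos\theta+y_s\sin\theta)$. Consequently $\|\mathcal R_n X\|_2=\|X\|_2$ on mean-zero elements, since both families are orthonormal with $k$ terms scaled by $k^{-1/2}$.

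Next I would derive the kernel representation. For $X$ homogeneous of degree $k$, expand $\widetilde\alpha_\theta(X)=\sum_{m=0}^k \cos^{k-m}\theta\,\sin^m\theta\, X_{k,m}$, where $X_{k,m}$ has $y$-degree $m$, so that $X_{k,1}=\delta X$. A Pisier-type principal value integral then gives
\[T_n(X)=\mathrm{p.v.}\int_{-\pi/2}^{\pi/2}\widetilde\alpha_\theta(X)\,\frac{d\theta}{\tan\theta}\;\;\text{(or the analogous odd kernel on }\mathbb T),\]
which equals a sum over odd $m$ of explicit coefficients $c_{k,m}X_{k,m}$ with $c_{k,1}$ comparable to $k^{-1/2}$. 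The bound $\|T_n X\|_p\le C_p\|X\|_p$ follows by transference: $\theta\mapsto \widetilde\alpha^{(p)}_\theta(X)$ is an orbit of an isometric group, so one applies the $L_p(L_p(\Gamma^{(2)}))$-valued Hilbert transform on $\mathbb T$, which is bounded since Haagerup $L_p$ is UMD for $1<p<\infty$. Averaging over $\theta$, I would write $\widetilde\alpha_{\theta}^{(p)}(T_nX)$ as the Hilbert transform of the orbit of $X$, apply $\widetilde\alpha_{-\theta}^{(p)}$ (an isometry), and integrate in $\theta$.

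The remaining factor $\widetilde\Pi^{(n)}$ must extract the $m=1$ component and correct the ratio between $c_{k,1}$ and $k^{-1/2}$, so that $\mathcal R_n=\widetilde\Pi^{(n)}\circ T_n$. This is the main obstacle: extracting the $y$-degree-$1$ part is not an averaging over a group in general, and the multiplier in $k$ must be dimension-free on $L_p$. My first attempt would use the analytic family $z\mapsto\widetilde\Pi_z$ built from second quantization on the second copy, $y_j\mapsto e^{-z}y_j$ (the Ornstein--Uhlenbeck semigroup of the second family), together with the $\e$-OU semigroup in $k$. These are completely positive, state- and modular-preserving contractions for $\Re z\ge 0$, uniformly bounded on $L_\infty$-type endpoints and unitary-like on $L_2$ for imaginary $z$. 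The plan is then to write the $m=1$ projection times the correcting multiplier as an absolutely convergent integral $\int \widetilde\Pi_{z}\,d\kappa(z)$, apply Stein's complex interpolation between $L_2$ (where everything is diagonal and bounded) and the $L_1/L_\infty$ endpoints (where the semigroup pieces are contractions), and obtain constants depending only on $p$. I would first try this for $p\ge 2$ via interpolation and then treat $p<2$ by duality.

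Finally, for the lower bound, I would use that $\mathcal R_n^*\mathcal R_n=\mathrm{Id}$ on mean-zero elements, which follows from the $L_2$ isometry. The adjoint $\mathcal R_n^*=(\widetilde N^\e)^{-1/2}\delta^*$ satisfies the same kernel-type representation, so it is bounded on $L_{p'}$ for the same reasons. Duality then gives $\|X\|_p=\sup|\langle \mathcal R_nX,\mathcal R_nY\rangle|\le C_{p'}\|\mathcal R_n X\|_p$. Every constant comes from the Hilbert transform and interpolation, so it is independent of $n$, $\e$ and the $\mu_j$.
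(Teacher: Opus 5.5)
Your overall architecture matches the paper's: the rotation $\widetilde\alpha_\theta$, Coifman--Weiss transference with the UMD Hilbert transform, a projection onto $y$-degree one, and the $L_2$-isometry for the lower bound. However, the step you yourself call the main obstacle is left open, and the mechanism you sketch for it does not work.

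\textbf{The kernel.} With the kernel $\cot\theta$, the degree-one coefficient is the Wallis integral $\int_{-\pi/2}^{\pi/2}\cos^k\theta\,d\theta$. This is only asymptotic to $\sqrt{2\pi/k}$, so you are forced to add a degree-dependent multiplier $m(\widetilde N^\e)$, and you never establish its dimension-free $L_p$-boundedness. (That could in principle be done with a dimension-free holomorphic functional calculus for $\widetilde N^\e$ on Haagerup $L_p$, but this is far more than is needed.) The missing idea is to choose the kernel so that no correction arises. For $\kappa(\theta)=\mathrm{sgn}(\theta)/\sqrt{-\log\cos^2\theta}$, the substitution $u=-\log\cos^2\theta$ gives
\[\frac{1}{\sqrt{2\pi}}\int_{-\pi/2}^{\pi/2}\kappa(\theta)\cos^{k-1}\theta\,\sin\theta\,d\theta=\frac{1}{\sqrt{2\pi}}\int_0^\infty u^{-1/2}e^{-ku/2}\,du=k^{-1/2}\]
exactly. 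Moreover $\kappa-\frac12\cot(\theta/2)$ still lies in $L_1(\mathbb T)$, so your transference argument applies verbatim. One then gets $\mathcal R_n=\widetilde\Pi^{(n)}\circ T_n$, where $\widetilde\Pi^{(n)}=\mathbf 1_{\{1\}}(\widetilde N_y^\e)$ is simply a spectral projection of the second-family number operator.

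\textbf{Bounding the projection.} Your way of bounding this projection rests on a false claim. The map $e^{-z\widetilde N_y^\e}$ is a completely positive contraction for real $z\ge0$, but not for general $z$ in the half-plane. On the imaginary axis it is unitary on $L_2$ but not uniformly bounded on $L_\infty$. For instance, take the commutative subalgebra generated by $\eta_{n+1},\dots,\eta_{2n}$ with all $\mu_j=1$. There $\widetilde N_y^\e$ acts as twice the Walsh degree, and the restriction of $e^{-is\widetilde N_y^\e}$ has $L_\infty$-norm $(|\cos s|+|\sin s|)^n$. So no interpolation scheme on the closed half-plane has $n$-independent endpoint bounds.

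What Stein's theorem for symmetric Markov semigroups actually gives is a uniformly bounded analytic extension $S_z^{(p)}$ of $S_t=e^{-t\widetilde N_y^\e}$, but only on a sector $\Sigma_{\theta_p}$ with $\theta_p<\pi/2$. The paper combines this with the Fourier-coefficient formula
\[\widetilde\Pi^{(n)}=\frac{e^{a}}{2\pi}\int_{-\pi}^{\pi}e^{is}S_{a+is}\,ds,\]
which is valid because the spectrum of $\widetilde N_y^\e$ lies in $\mathbb N_0$. Choosing $a=a_p$ so large that $a_p+i[-\pi,\pi]\subset\Sigma_{\theta_p}$ yields $\|(\widetilde\Pi^{(n)})^{(p)}\|\le e^{a_p}B_p$.

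\textbf{The lower bound.} With these two corrections the rest of your argument goes through, with one caveat in the duality step. In Haagerup $L_p$, the $L_p$--$L_{p'}$ pairing of symmetric embeddings is the KMS form $\mathrm{tr}(D^{1/2}X^*D^{1/2}Y)$, not $\tau(X^*Y)$. So you also need that $\mathcal R_n$ intertwines the modular groups in order to transfer the $L_2$-isometry. Alternatively, follow the paper: bound $(\mathcal R_n^*)^{(p)}=-E^{(p)}T_n^{(p)}(\widetilde\Pi^{(n)})^{(p)}$ directly and use $\mathcal R_n^*\mathcal R_n=I$.
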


\subsection{A dilated Riesz transform and its Kernel representation}

We begin with a kernel representation on the spin Hilbert space. Recall from Subsection \ref{subsec:doubled-baby-fock} that $H$ is naturally identified with the closed subspace of $H^{(2)}$ spanned by $\{x_S:S\subseteq[\pm n]\}$, and that $\{x_Sy_T:S,T\subseteq[\pm n]\}$ is an orthonormal basis of $H^{(2)}$.

For $j\in[\pm n]$, we define
\[y_jH=\{y_j\xi:\xi\in H\}\subseteq H^{(2)}.\]
Then the subspaces $\{y_jH:j\in[\pm n]\}$ are mutually orthogonal. Let $\Pi_j:H^{(2)}\longrightarrow y_jH$ denote the corresponding orthogonal projection, and set $\displaystyle \Pi^{(n)}=\sum_{j\in[\pm n]}\Pi_j$. We also denote by $E_H:H^{(2)}\to H$ the orthogonal projection onto $H$. Recall that the rotation $\alpha_\theta:H^{(2)}\longrightarrow H^{(2)}$ was introduced in Subsection \ref{subsec:doubled-baby-fock} and satisfies
\[ \alpha_\theta(x_j) = x_j\cos\theta+y_j\sin\theta, \qquad j\in[\pm n].\]

Let us define a kernel function 
\[\kappa(\theta)=\frac{{\rm sgn}(\theta)}{\sqrt{-\log\cos^2\theta}}, \qquad -\frac{\pi}{2}<\theta<\frac{\pi}{2}.\]
The following lemma is essentially the kernel representation of Lust-Piquard \cite[Lemma 3.1]{LP1998}, written in the notation of our doubled spin system. 
\begin{lemma}\label{lem:L2-kernel}
If $\xi\in H$ satisfies $\varphi_{n,\e}(\xi)=0$, then
\[E_H\alpha_\theta(\xi)=(\cos\theta)^{N^\e}\xi,\qquad -\frac{\pi}{2}<\theta<\frac{\pi}{2}.\]
Moreover, for any $j\in[\pm n]$ and $\xi\in H$ satisfying $\varphi_{n,\e}(\xi)=0$,
\begin{equation}\label{kernel-rep-eq1}
\beta_j(N^\e)^{-1/2}\xi=\frac{1}{\sqrt{2\pi}}\,
y_j\Pi_j\,{\rm p.v.}\int_{-\pi/2}^{\pi/2}\alpha_\theta(\xi)\kappa(\theta)\,d\theta.
\end{equation}
\end{lemma}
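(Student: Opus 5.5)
The plan is to verify both identities on the orthonormal basis $\{x_S\}$ of $H$ and then extend by linearity. The main tool is that $\alpha_\theta$ is a $\ast$-automorphism of $M_2^{\otimes 2n}$, so it acts multiplicatively on spin words. Fix a nonempty $S=\{s_1<\cdots<s_k\}\subseteq[\pm n]$; the mean-zero hypothesis $\varphi_{n,\e}(\xi)=0$ means exactly that $\xi$ has no component along $x_\emptyset$, so $k\geq1$. Multiplicativity gives
\[
\alpha_\theta(x_S)=\prod_{r=1}^k\bigl(x_{s_r}\cos\theta+y_{s_r}\sin\theta\bigr)
=\sum_{T\subseteq S}(\cos\theta)^{k-|T|}(\sin\theta)^{|T|}\,w_T ,
\]
where $w_T$ is the ordered word obtained from $x_{s_1}\cdots x_{s_k}$ by replacing $x_{s}$ with $y_{s}$ for every $s\in T$. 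By \eqref{eq320}, each $w_T$ can be reordered as $\pm x_{S\setminus T}\,y_T$. These are distinct elements of the orthonormal basis $\{x_Ry_T\}$ of $H^{(2)}$.

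\textbf{First identity.} The projection $E_H$ keeps only the term $T=\emptyset$, so $E_H\alpha_\theta(x_S)=(\cos\theta)^{|S|}x_S=(\cos\theta)^{N^\e}x_S$.

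\textbf{Projection onto $y_jH$.} The space $y_jH$ is spanned by $y_jx_R=\pm x_Ry_{\{j\}}$, so $\Pi_j$ keeps only the term $T=\{j\}$, and this term is present only when $j\in S$. I will check the sign carefully. In $w_{\{j\}}$, moving $y_j$ to the front past the $x_s$ with $s<j$ produces $\prod_{s\in S,\,s<j}\e(j,s)$. On the other hand,
\[
\beta_jx_S=x_jx_S=\Bigl(\prod_{s\in S,\,s<j}\e(j,s)\Bigr)x_{S\setminus\{j\}},
\]
using \eqref{hz:index} and $x_j^2={\bf 1}$. The two signs agree, hence $w_{\{j\}}=y_j\beta_jx_S$. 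Therefore
\[
y_j\Pi_j\alpha_\theta(x_S)=(\cos\theta)^{k-1}\sin\theta\;\beta_jx_S ,
\]
using $y_j^2={\bf 1}$ from \eqref{eq321}; if $j\notin S$, both sides vanish. This sign bookkeeping is the step I expect to require the most care. The cleanest route is to note that both $w_{\{j\}}$ and $x_jx_S$ are obtained by commuting a single generator from position $j$ to the front, past the same letters, with the same scalar $\e(j,\cdot)$ at each step.

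\textbf{The scalar integral.} Since $y_j\Pi_j$ is a fixed linear map, it can be taken inside the principal value integral. It then suffices to show
\[
\frac{1}{\sqrt{2\pi}}\,{\rm p.v.}\int_{-\pi/2}^{\pi/2}(\cos\theta)^{k-1}\sin\theta\,\kappa(\theta)\,d\theta=k^{-1/2},
\]
because $\beta_j(N^\e)^{-1/2}x_S=k^{-1/2}\beta_jx_S$. The integrand is even and, thanks to the factor $\sin\theta$, integrable near $0$, so the principal value is an ordinary integral equal to twice the integral over $(0,\pi/2)$. Substitute $u=-\log\cos^2\theta$, so that $\cos\theta=e^{-u/2}$ and $du=2\tan\theta\,d\theta$, which gives $(\cos\theta)^{k-1}\sin\theta\,d\theta=\tfrac12e^{-ku/2}\,du$. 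The integral becomes
\[
\frac{2}{\sqrt{2\pi}}\cdot\frac12\int_0^\infty e^{-ku/2}u^{-1/2}\,du=\frac{1}{\sqrt{2\pi}}\,\Gamma(\tfrac12)\Bigl(\frac{k}{2}\Bigr)^{-1/2}=k^{-1/2}.
\]

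Finally, the full vector ${\rm p.v.}\int\alpha_\theta(\xi)\kappa(\theta)\,d\theta$ exists in the finite-dimensional space $H^{(2)}$. Each component is $(\cos\theta)^{k-|T|}(\sin\theta)^{|T|}\kappa(\theta)$: for $T=\emptyset$ it is odd and its principal value vanishes, and for $|T|\geq1$ it is integrable. Applying $y_j\Pi_j$ to this vector and using linearity in $\xi$ then yields \eqref{kernel-rep-eq1}.
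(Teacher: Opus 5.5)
Your proposal is correct. It follows essentially the route the paper relies on: the paper cites Lust-Piquard's Lemma 3.1 for this statement, and its own proof of the $q$-Fock analogue (Lemma \ref{lem-ker-F}) is the same argument, namely expanding $\alpha_\theta(x_S)$ over subsets $T\subseteq S$, keeping the $T=\emptyset$ term for $E_H$ and the $T=\{j\}$ term for $\Pi_j$, and evaluating the scalar integral to $k^{-1/2}$ via $u=-\log\cos^2\theta$. Your explicit sign check $w_{\{j\}}=y_j\beta_jx_S$ and your remark on the existence of the vector-valued principal value supply the two details the paper leaves implicit.
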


The preceding formula immediately gives a kernel representation of the baby Fock Riesz transform $R_{\pm j}^{\e}$ defined in the previous section.

\begin{lemma}\label{kernel-rep}
Let $X\in\Gamma_{n,\e}^0$ and $j\in[n]$. Then
\begin{align}
\label{eq-kernel-rep-1}
&\mu_j\gamma_{n+j}R_j^\e(X)=\frac{1}{\sqrt{2\pi}}\,
(\phi_{n,\e}^{(2)})^{-1}\Pi_j\,{\rm p.v.}\int_{-\pi/2}^{\pi/2}\alpha_\theta(\phi_{n,\e}(X))\kappa(\theta)\,d\theta,\\
\label{eq-kernel-rep-2}&\mu_j^{-1}\gamma_{n+j}^*R_{-j}^\e(X)=\frac{1}{\sqrt{2\pi}}\,
(\phi_{n,\e}^{(2)})^{-1}\Pi_{-j}\,{\rm p.v.}\int_{-\pi/2}^{\pi/2}\alpha_\theta(\phi_{n,\e}(X))\kappa(\theta)\,d\theta.
\end{align}
\end{lemma}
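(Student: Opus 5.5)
The plan is to deduce both identities directly from Lemma \ref{lem:L2-kernel} by applying the GNS map $\phi_{n,\e}^{(2)}$ to both sides. This map is injective, since the vacuum is separating for $\Gamma_{n,\e}^{(2)}$. It is also onto $H^{(2)}$: the vacuum is cyclic, and everything is finite-dimensional. Hence $(\phi_{n,\e}^{(2)})^{-1}$ is defined on the right-hand sides, and it suffices to prove the corresponding vector identities in $H^{(2)}$. Throughout we use $\phi_{n,\e}^{(2)}(ZY)=Z\,\phi_{n,\e}^{(2)}(Y)$ for $Z,Y\in\Gamma^{(2)}_{n,\e}$, which holds because $\phi^{(2)}_{n,\e}(Y)=Y{\bf 1}$. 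We also use $\phi^{(2)}_{n,\e}=\phi_{n,\e}$ on $\Gamma_{n,\e}$.

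\textbf{Identity \eqref{eq-kernel-rep-1}.} Set $\xi=\phi_{n,\e}(X)$; it is mean zero because $\tau_{n,\e}(X)=0$. By \eqref{eq208},
\[
\phi^{(2)}_{n,\e}\bigl(\mu_j\gamma_{n+j}R_j^\e(X)\bigr)=\mu_j\gamma_{n+j}\,\zeta,\qquad \zeta:=\beta_j(N^\e)^{-1/2}\xi\in H.
\]
The key step is to compute the action of $\gamma_{n+j}$ on $H$. Recall $\gamma_{n+j}=\mu_j\beta_{-(n+j)}+\mu_j^{-1}\beta_{n+j}^*$. Every basis vector of $H$ has the form $x_S=x_Sy_\emptyset$, with empty $y$-part. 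Therefore $\beta_{-(n+j)}$ annihilates $H$, and $\beta_{n+j}^*x_S=y_jx_S$. This gives
\[
\mu_j\gamma_{n+j}\zeta=y_j\zeta .
\]
Now insert \eqref{kernel-rep-eq1} for $\zeta$. We get $y_j\zeta=\frac{1}{\sqrt{2\pi}}\,y_j^2\,\Pi_j\,{\rm p.v.}\int\alpha_\theta(\xi)\kappa(\theta)\,d\theta$. By \eqref{eq321} with $\e(j,j)=-1$ we have $y_j^2=I$. Applying $(\phi^{(2)}_{n,\e})^{-1}$ then yields \eqref{eq-kernel-rep-1}.

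\textbf{Identity \eqref{eq-kernel-rep-2}.} The argument is the same. Here
\[
\mu_j^{-1}\gamma_{n+j}^*=\beta_{-(n+j)}^*+\mu_j^{-2}\beta_{n+j}.
\]
On $H$ the operator $\beta_{n+j}$ vanishes and $\beta_{-(n+j)}^*x_S=y_{-j}x_S$. So $\mu_j^{-1}\gamma^*_{n+j}\zeta'=y_{-j}\zeta'$, where $\zeta'=\beta_{-j}(N^\e)^{-1/2}\xi$. Lemma \ref{lem:L2-kernel} with index $-j$, together with $y_{-j}^2=I$, finishes the proof.

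\textbf{Where care is needed.} There is no real obstacle; the points to check are bookkeeping. First, $\zeta$ and $\zeta'$ lie in $H$, which holds because $\beta_{\pm j}$ and $(N^\e)^{-1/2}$ preserve $H$; this is what makes the $\beta_{\pm(n+j)}$ terms vanish. Second, the creation operators $\beta^*_{\pm(n+j)}$ multiply on the left, in the same order as the factor $y_{\pm j}$ in \eqref{kernel-rep-eq1}, so the two factors $y_{\pm j}$ combine into $y_{\pm j}^2=I$ with no sign twist.
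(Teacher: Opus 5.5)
Your proposal is correct and is essentially the paper's proof. You apply $\phi^{(2)}_{n,\e}$, observe that on $H$ the operator $\beta_{-(n+j)}$ (resp.\ $\beta_{n+j}$) vanishes while $\beta^*_{n+j}$ (resp.\ $\beta^*_{-(n+j)}$) acts as left multiplication by $y_j$ (resp.\ $y_{-j}$), and then insert Lemma~\ref{lem:L2-kernel}; you also make explicit the cancellation $y_{\pm j}^2=I$ (from \eqref{eq321} with $\e(j,j)=-1$), which the paper uses silently in its last equality.
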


\begin{proof}
By Lemma \ref{lem:L2-kernel}, for any $X\in \Gamma_{n,\e}^0$,
\begin{equation}\label{eq401}
\phi_{n,\e}(R_j^\e(X))=\frac{1}{\sqrt{2\pi}}\,y_j\Pi_j\,{\rm p.v.}\int_{-\pi/2}^{\pi/2}\alpha_\theta(\phi_{n,\e}(X))\kappa(\theta)\,d\theta.
\end{equation}
Since $R_j^\e(X)\in\Gamma_{n,\e}$ does not involve $\gamma_{n+1},\ldots,\gamma_{2n}$, we have
\begin{align*}
  &\beta_{-(n+j)}\phi_{n,\e}(R_j^\e(X))=0,\\
  &\beta_{n+j}^*\phi_{n,\e}(R_j^\e(X))=y_j\phi_{n,\e}(R_j^\e(X))
\end{align*}
and this implies $\displaystyle \mu_j\gamma_{n+j}\phi_{n,\e}(R_j^\e(X))=y_j\phi_{n,\e}(R_j^\e(X))$. Moreover, by \eqref{eq401}, we obtain
\begin{align*}
  \phi_{n,\e}^{(2)}(\mu_j\gamma_{n+j}R_j^\e(X))&=  \mu_j\gamma_{n+j}\phi_{n,\e}(R_j^\e(X))=y_j\phi_{n,\e}(R_j^\e(X))\\
  &=\frac{1}{\sqrt{2\pi}}\,\Pi_j\,{\rm p.v.}\int_{-\pi/2}^{\pi/2}\alpha_\theta(\phi_{n,\e}(X))\kappa(\theta)\,d\theta,
\end{align*}
which is equivalent to \eqref{eq-kernel-rep-1}. 

The proof of \eqref{eq-kernel-rep-2} is analogous. Indeed, since $R_{-j}^\e(X)\in\Gamma_{n,\e}$ does not involve $\gamma_{n+1},\ldots,\gamma_{2n}$, we have
\begin{align*}
&\beta_{n+j}\phi_{n,\e}(R_{-j}^\e(X))=0,\\
&\beta_{-(n+j)}^*\phi_{n,\e}(R_{-j}^\e(X))= y_{-j}\phi_{n,\e}(R_{-j}^\e(X)).
\end{align*}
Hence $\mu_j^{-1}\gamma_{n+j}^*\phi_{n,\e}(R_{-j}^\e(X))=y_{-j}\phi_{n,\e}(R_{-j}^\e(X))$ holds. Applying Lemma \ref{lem:L2-kernel} with $-j$ in place of $j$ and arguing as above, we obtain
\[\phi_{n,\e}^{(2)}\bigl(\mu_j^{-1}\gamma_{n+j}^*R_{-j}^\e(X)\bigr)=\frac{1}{\sqrt{2\pi}}\,
\Pi_{-j}\,{\rm p.v.}\int_{-\pi/2}^{\pi/2}\alpha_\theta(\phi_{n,\e}(X))\kappa(\theta)\,d\theta,\]
which is equivalent to \eqref{eq-kernel-rep-2}.
\end{proof}

We use the doubled baby Fock model introduced in Subsection \ref{subsec:doubled-baby-fock} to assemble the baby Fock Riesz transforms $R_{\pm j}^\e$ into a single dilated Riesz transform $\mathcal{R}_n$.

\begin{definition}
We define $\mathcal R_n:\Gamma_{n,\e}^0 \longrightarrow \Gamma_{n,\e}^{(2)}$ by 
\begin{equation}\label{eq217}
\mathcal R_n(X)=\sum_{j=1}^n\left( \mu_j\gamma_{n+j}R_j^\e(X)+\mu_j^{-1}\gamma_{n+j}^*R_{-j}^\e(X)\right),\qquad X\in\Gamma_{n,\e}^0.
\end{equation}
For $1\leq p<\infty$, we define $\mathcal R_n^{(p)}:L_p^0(\Gamma_{n,\e})\longrightarrow L_p(\Gamma_{n,\e}^{(2)})$ by
\begin{equation}\label{eq219}
\mathcal R_n^{(p)}\left(D_{n,\e}^{1/2p}XD_{n,\e}^{1/2p}\right)=(D_{n,\e}^{(2)})^{1/2p}\mathcal R_n(X)
(D_{n,\e}^{(2)})^{1/2p},\qquad X\in\Gamma_{n,\e}^0.
\end{equation}

\end{definition}

\begin{remark}\label{rmk401}
\begin{enumerate}
    \item Applying Proposition \ref{prop:Riesz-transference}, for any $f\in L_\infty^0(\Omega_n,\nu)$, we obtain
\begin{equation}\label{eq218}
\mathcal R_n(\Phi(f))=\sum_{j=1}^n\frac{\sqrt{2}}{\mu_j^2+\mu_j^{-2}}\left(-\gamma_{n+j}\gamma_j^*+\gamma_{n+j}^*\gamma_j\right)\Phi(R_jf).
\end{equation}
Thus the biased-hypercube Riesz transforms are encoded in the action of $\mathcal R_n$ on the commutative subalgebra $\Phi(L_\infty(\Omega_n,\nu))$.
\item If we define  $\nabla^\e\xi =\sum_{j\in[\pm n]}y_j\beta_j(\xi)$ on $H$, then
$\nabla^\e\xi=\left.\frac{d}{d\theta}\right|_{\theta=0}\alpha_\theta(\xi)$ holds. Moreover,
\[\phi_{n,\e}^{(2)}(\mathcal R_n(X))=\nabla^\e(N^\e)^{-1/2}\phi_{n,\e}(X)\]
holds for all $X\in\Gamma_{n,\e}^0$. Thus, on the GNS space, $\mathcal R_n$ is precisely the Riesz transform obtained by composing the infinitesimal rotation $\nabla^\e$ with $(N^\e)^{-1/2}$.
\end{enumerate}

\end{remark}

\subsection{$L_p$-estimates for the Riesz transform}

For a linear operator $S:H^{(2)}\to H^{(2)}$, we write $\wt S=(\phi_{n,\e}^{(2)})^{-1}S\phi_{n,\e}^{(2)}\in B(\Gamma_{n,\e}^{(2)})$. In this way, we define
\begin{align*}
&\wt{\alpha}_\theta
=(\phi_{n,\e}^{(2)})^{-1}\alpha_\theta\phi_{n,\e}^{(2)}
\in B(\Gamma_{n,\e}^{(2)}),\\
&\wt{\Pi}^{(n)}
=(\phi_{n,\e}^{(2)})^{-1}\Pi^{(n)}\phi_{n,\e}^{(2)}
\in B(\Gamma_{n,\e}^{(2)}).
\end{align*}
Let us define
\begin{equation}\label{eq404}
T_n(X)=\frac{1}{\sqrt{2\pi}}\, {\rm p.v.}\int_{-\pi/2}^{\pi/2} \wt{\alpha}_\theta(X)\kappa(\theta)\,d\theta, \qquad X\in\Gamma_{n,\e}^{(2)}.    
\end{equation}
Then by Lemma \ref{kernel-rep},
\begin{equation}\label{eq:R-factorization}
\mathcal R_n(X)=\frac{1}{\sqrt{2\pi}}\,\wt{\Pi}^{(n)}{\rm p.v.}\int_{-\pi/2}^{\pi/2}
\wt{\alpha}_\theta(X)\kappa(\theta)\,d\theta=\left(\wt{\Pi}^{(n)}\circ T_n\right)(X),
\qquad X\in\Gamma_{n,\e}^0.
\end{equation}

We next establish dimension-free $L_p$-estimates for $\wt{\Pi}^{(n)}$ and $T_n$. In particular, for the estimate of $T_n$, a key point is that the lifted map $\wt{\alpha}_\theta$ is a state-preserving $*$-automorphism of $\Gamma_{n,\e}^{(2)}$. We establish this fact first. For this purpose, we first examine how the spin rotation acts on the annihilation operators. This will allow us to identify the lifted map $\wt{\alpha}_\theta$ with the restriction of the unitary conjugation induced by $\alpha_\theta$ on $H^{(2)}$.

\begin{proposition}\label{prop:rotation-baby-fock}
For any $\theta\in\mathbb R$, the lifted map $\wt{\alpha}_\theta$ is a state-preserving $*$-automorphism of $\Gamma_{n,\e}^{(2)}$. Moreover, for any $j\in[n]$,
\begin{align}
\wt{\alpha}_\theta(\gamma_j)&=\gamma_j\cos\theta+\gamma_{n+j}\sin\theta,\label{eq:rotation-gamma-1}\\
\wt{\alpha}_\theta(\gamma_{n+j})&=-\gamma_j\sin\theta+\gamma_{n+j}\cos\theta.
\label{eq:rotation-gamma-2}
\end{align}
\end{proposition}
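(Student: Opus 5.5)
The plan is to realize $\alpha_\theta$, viewed as an operator on $H^{(2)}$, as a unitary $V_\theta$. Conjugation by $V_\theta$ will then carry $\Gamma^{(2)}_{n,\e}$ onto itself and fix the vacuum. Since $\alpha_\theta$ is a trace-preserving $*$-automorphism of $M_2^{\otimes 2n}$ with $\alpha_\theta({\bf 1})={\bf 1}$, the map $V_\theta:\xi\mapsto\alpha_\theta(\xi)$ is a unitary on $H^{(2)}=L_2(M_{2^{2n}},\varphi^{(2)}_{n,\e})$ with $V_\theta{\bf 1}={\bf 1}$. I would then show that
\[
\mathrm{Ad}V_\theta(\beta_j)=V_\theta\beta_jV_\theta^*=\beta_j\cos\theta+\beta_{n+j}\sin\theta ,
\]
where $\beta_{n+j}$ for $j<0$ means $\beta_{-(n+|j|)}$. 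The same relation holds for the creation operators by taking adjoints. In words: the rotation acts on the annihilation operators exactly as it acts on the generators $x_j\mapsto x_j\cos\theta+y_j\sin\theta$.

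\textbf{Key step.} The natural route is to use the left regular representation. On $H^{(2)}$ the left multiplication operator satisfies $L(x_j)=\beta_j+\beta_j^*$. To see this, split $x_S y_T$ according to whether $j\in S$, and use the fact that left multiplication by $x_j$ produces the ordered basis element up to the sign absorbed in the definition. Similarly, $L(y_j)=\beta_{n+j}+\beta_{n+j}^*$. Since $\alpha_\theta$ is an automorphism, $V_\theta L(x)V_\theta^*=L(\alpha_\theta(x))$, so $\mathrm{Ad}V_\theta$ sends $\beta_j+\beta_j^*$ to $(\beta_j+\beta_j^*)\cos\theta+(\beta_{n+j}+\beta_{n+j}^*)\sin\theta$.

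This identity alone does not separate $\beta_j$ from $\beta_j^*$. To separate them I would also use the right multiplications. These form a second family $\beta_j'+\beta_j'^{*}$, with $\beta_j-\beta_j^*$ expressible through $L(x_j)$ combined with the grading (parity) operator. This is the standard "$\beta_j-\beta_j^*=L(x_j)\,\cdot$ twisted right action" trick of the spin model. Both the grading operator and the right multiplications are intertwined by $V_\theta$, since $\alpha_\theta$ preserves degrees and is an automorphism.

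If this route is too messy, the fallback is to check the action directly on basis vectors. Note that $\mathrm{Ad}V_\theta$ conjugates the CAR-type relations. The operator $b_j(\theta):=V_\theta\beta_jV_\theta^*$ annihilates ${\bf 1}$, and acting on ${\bf 1}$ it has the same matrix coefficients as $\beta_j\cos\theta+\beta_{n+j}\sin\theta$. That is, one compares both sides on vectors of the form $\alpha_\theta(x_S)$ by computing $\alpha_\theta(x_jx_S)$. This identification of the rotated annihilation operators, in particular keeping the signs $\e$ straight, is the main obstacle.

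\textbf{Rest of the proof.} Granting the key step, and using $\mu_{n+j}=\mu_j$,
\[
V_\theta\gamma_jV_\theta^*=\mu_j(\beta_{-j}\cos\theta+\beta_{-n-j}\sin\theta)+\mu_j^{-1}(\beta_j^*\cos\theta+\beta_{n+j}^*\sin\theta)=\gamma_j\cos\theta+\gamma_{n+j}\sin\theta .
\]
Applying the same computation to $\beta_{n+j}$, using $\alpha_\theta(y_j)=\alpha_{\theta+\pi/2}(x_j)=-x_j\sin\theta+y_j\cos\theta$, gives the second rotation formula for $\gamma_{n+j}$. Hence $\mathrm{Ad}V_\theta$ maps the generators of $\Gamma^{(2)}_{n,\e}$ into $\Gamma^{(2)}_{n,\e}$. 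Since $\mathrm{Ad}V_{-\theta}$ inverts it, $\mathrm{Ad}V_\theta$ restricts to a $*$-automorphism of $\Gamma^{(2)}_{n,\e}$. It is vacuum-state preserving because $V_\theta{\bf 1}={\bf 1}$.

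Finally, for $X\in\Gamma^{(2)}_{n,\e}$ we have
\[
\phi^{(2)}_{n,\e}(\mathrm{Ad}V_\theta(X))=V_\theta XV_\theta^*{\bf 1}=V_\theta X{\bf 1}=\alpha_\theta(\phi^{(2)}_{n,\e}(X)).
\]
Therefore $\wt{\alpha}_\theta=\mathrm{Ad}V_\theta|_{\Gamma^{(2)}_{n,\e}}$, which proves both assertions of the proposition.
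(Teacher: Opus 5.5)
Your outer argument is the paper's: implement $\alpha_\theta$ as the vacuum-fixing unitary $V_\theta$ on $H^{(2)}$, prove $V_\theta\beta_jV_\theta^*=\beta_j\cos\theta+\beta_{n+j}\sin\theta$ and its analogues, read off the $\gamma$-formulas using $\mu_{n+j}=\mu_j$, and identify $\wt\alpha_\theta$ with $\mathrm{Ad}V_\theta$ via $V_\theta{\bf 1}={\bf 1}$. That part is fine.

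\textbf{The gap.} The covariance identity for the $\beta$'s carries all the content of the proposition, and you leave it unproved. The identity $L(x_j)=\beta_j+\beta_j^*$ is correct, but your separation step does not work as stated. Since $L(x_j)^2=I$, the only $Z$ with $\beta_j-\beta_j^*=L(x_j)Z$ is $Z=2\beta_j^*\beta_j-I$. This is the occupation parity of the single mode $x_j$, not a global grading, and it is not invariant under $\mathrm{Ad}V_\theta$. So a formula of the form ``$L(x_j)$ times a grading'' cannot be transported as you intend.

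Your fallback does not close the gap either. Knowing that $V_\theta\beta_jV_\theta^*$ kills ${\bf 1}$ does not determine the operator. Moreover, the vectors $\alpha_\theta(x_S)$, $S\subseteq[\pm n]$, span only $\alpha_\theta(H)$, a $2^{2n}$-dimensional subspace of the $2^{4n}$-dimensional $H^{(2)}$. You would have to test on all $\alpha_\theta(x_Sy_T)$, and that is exactly where $\beta_{n+j}$ acts nontrivially.

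\textbf{How to repair your route.} Your idea of using right multiplications does work once made precise. Let $r(x)\xi=\xi x$, and let $G_j$ be the unitary $x_Sy_T\mapsto\prod_{s\in S}\e(s,j)\prod_{t\in T}\e(t,j)\,x_Sy_T$, i.e.\ the automorphism $x_i\mapsto\e(i,j)x_i$, $y_i\mapsto\e(i,j)y_i$. Moving $x_j$ from the right of $x_Sy_T$ to the left costs the sign $\prod_{s\in S\setminus\{j\}}\e(s,j)\prod_{t\in T}\e(t,j)$, by \eqref{hz:index} and \eqref{eq320}. Hence $r(x_j)G_j(x_Sy_T)=(-1)^{[j\in S]}x_jx_Sy_T$, and similarly for $y_j$ using \eqref{eq321}. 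This gives
\[
\beta_j=\tfrac12\bigl(L(x_j)-r(x_j)G_j\bigr),\qquad \beta_{n+j}=\tfrac12\bigl(L(y_j)-r(y_j)G_j\bigr),
\]
with the same $G_j$ in both formulas, and $G_{-j}=G_j$ for the negative indices. The untwisted parity would give wrong signs unless $\e\equiv-1$. Since $G_j$ multiplies $x_i$ and $y_i$ by the same sign, it commutes with $\alpha_\theta$. Together with $V_\theta L(x)V_\theta^*=L(\alpha_\theta(x))$, $V_\theta r(x)V_\theta^*=r(\alpha_\theta(x))$, and linearity in $x$, this yields the covariance at once.

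\textbf{Comparison with the paper.} The paper instead checks covariance by hand. It lets $K_j$ be spanned by the $x_Sy_T$ with $j\notin S\cup T$, so that $H^{(2)}=K_j\oplus x_jK_j\oplus y_jK_j\oplus x_jy_jK_j$. Then $\alpha_\theta$ preserves $K_j$, rotates the $x_j$- and $y_j$-pieces, and fixes the $x_jy_j$-piece. Since $\beta_j$ and $\beta_{n+j}$ act on the four pieces by explicit formulas, the identity reduces to four short computations. Your completed route is more structural: it gives covariance for any automorphism commuting with the twisted gradings. The paper's route is more elementary.
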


\begin{proof}
Recall that $\alpha_\theta$ is unitary on $H^{(2)}$ and fixes the vacuum vector ${\bf 1}$. We first show that the annihilation operators are covariant under this rotation.

Fix $j\in[ \pm n]$, and let $K_j$ be the closed subspace of $H^{(2)}$ spanned by all basis vectors $x_Sy_T$ such that $j\notin S\cup T$. Then
\[H^{(2)}=K_j\oplus x_jK_j\oplus y_jK_j\oplus x_jy_jK_j\]
orthogonally. For $\xi\in K_j$, the annihilation operators satisfy
\begin{align*}
    \left\{\begin{array}{lll}
    \beta_j(\xi)=0\\
\beta_j(x_j\xi)=\xi\\
\beta_j(y_j\xi)=0\\
\beta_j(x_jy_j\xi)=y_j\xi
    \end{array} \right . 
\qquad \text{and}\qquad
    \left\{\begin{array}{lll}
    \beta_{n+j}(\xi)=0\\
\beta_{n+j}(x_j\xi)=0\\
\beta_{n+j}(y_j\xi)=\xi\\
\beta_{n+j}(x_jy_j\xi)=-x_j\xi
\end{array} \right . .
\end{align*}
Moreover, $\alpha_\theta(K_j)=K_j$, and
\begin{align*}
\alpha_\theta(x_j\xi)
&=
(x_j\cos\theta+y_j\sin\theta)\alpha_\theta(\xi),\\
\alpha_\theta(y_j\xi)
&=
(-x_j\sin\theta+y_j\cos\theta)\alpha_\theta(\xi),\\
\alpha_\theta(x_jy_j\xi)
&=
x_jy_j\alpha_\theta(\xi).
\end{align*}
The last identity follows from $x_j^2=y_j^2={\rm Id}$ and $x_jy_j=-y_jx_j$.

Checking on the four orthogonal summands above, we obtain
\begin{align}
\alpha_\theta\beta_j\alpha_{-\theta}
&=\beta_j\cos\theta+\beta_{n+j}\sin\theta,
\label{eq:rotation-beta-1}\\
\alpha_\theta\beta_{n+j}\alpha_{-\theta}
&=-\beta_j\sin\theta+\beta_{n+j}\cos\theta.
\label{eq:rotation-beta-2}
\end{align}
Applying the same argument to the negative indices gives
\begin{align}
\alpha_\theta\beta_{-j}\alpha_{-\theta}
&=\beta_{-j}\cos\theta+\beta_{-(n+j)}\sin\theta,
\label{eq:rotation-beta-3}\\
\alpha_\theta\beta_{-(n+j)}\alpha_{-\theta}
&=-\beta_{-j}\sin\theta+\beta_{-(n+j)}\cos\theta.
\label{eq:rotation-beta-4}
\end{align}
Since $\alpha_\theta$ is unitary, taking adjoints in \eqref{eq:rotation-beta-1}--\eqref{eq:rotation-beta-4} gives the corresponding identities for the creation operators.

Recall that $\mu_{n+j}=\mu_j$, $\gamma_j=\mu_j\beta_{-j}+\mu_j^{-1}\beta_j^*$, and $\gamma_{n+j}=\mu_j\beta_{-(n+j)}+\mu_j^{-1}\beta_{n+j}^*$. It follows from \eqref{eq:rotation-beta-1}--\eqref{eq:rotation-beta-4} that
\begin{align*}
\alpha_\theta\gamma_j\alpha_{-\theta}
&=\gamma_j\cos\theta+\gamma_{n+j}\sin\theta,\\
\alpha_\theta\gamma_{n+j}\alpha_{-\theta}
&=-\gamma_j\sin\theta+\gamma_{n+j}\cos\theta.
\end{align*}
Hence the unitary conjugation $X\mapsto\alpha_\theta X\alpha_{-\theta}$ leaves $\Gamma_{n,\e}^{(2)}$ invariant.

We now compare this conjugation with the lifted map $\wt{\alpha}_\theta$. Since $\alpha_\theta({\bf 1})={\bf 1}$, for any $X\in\Gamma_{n,\e}^{(2)}$ we have
\begin{align*}
\phi_{n,\e}^{(2)}\bigl(\alpha_\theta X\alpha_{-\theta}\bigr)
=\alpha_\theta X\alpha_{-\theta}({\bf 1})=\alpha_\theta(X{\bf 1})=\alpha_\theta\bigl(\phi_{n,\e}^{(2)}(X)\bigr).
\end{align*}
By the definition of $\wt{\alpha}_\theta$, this implies $\wt{\alpha}_\theta(X)=\alpha_\theta X\alpha_{-\theta}$ for all $X\in\Gamma_{n,\e}^{(2)}$.
Therefore $\wt{\alpha}_\theta$ is a $*$-automorphism of $\Gamma_{n,\e}^{(2)}$, with inverse $\wt{\alpha}_{-\theta}$. The identities \eqref{eq:rotation-gamma-1} and \eqref{eq:rotation-gamma-2} follow from the computation above.

Finally, since $\tau_{n,\e}^{(2)}(X)=\langle X{\bf 1},{\bf 1}\rangle$, the unitarity of $\alpha_\theta$ and the identity $\alpha_\theta({\bf 1})={\bf 1}$ explain
\begin{align*}
\tau_{n,\e}^{(2)}(\wt{\alpha}_\theta(X))= \left\langle\alpha_\theta X\alpha_{-\theta}({\bf 1}),{\bf 1}\right\rangle = \left\langle \alpha_\theta(X{\bf 1}),{\bf 1}\right\rangle= \left\langle X{\bf 1},{\bf 1}\right\rangle=\tau_{n,\e}^{(2)}(X).
\end{align*}
Thus $\wt{\alpha}_\theta$ is state preserving.
\end{proof}

Since $\wt{\alpha}_\theta$ is a state-preserving $*$-automorphism, it commutes with the modular automorphism group associated with $\tau_{n,\e}^{(2)}$. Consequently, for every $1\leq p<\infty$, its canonical extension $\wt{\alpha}_\theta^{(p)}:L_p(\Gamma_{n,\e}^{(2)})\to L_p(\Gamma_{n,\e}^{(2)})$ is an isometry. Moreover, $(\wt{\alpha}_\theta^{(p)})_{\theta\in\mathbb R}$ forms a continuous one-parameter group of isometries on $L_p(\Gamma_{n,\e}^{(2)})$.

We first estimate the singular integral appearing in $T_n$.

\begin{lemma}\label{lem:Tn-bound}
Let $1<p<\infty$. There exists a constant $C_p>0$, depending only on $p$, such that
\[\|T_n^{(p)}(X)\|_{L_p(\Gamma_{n,\e}^{(2)})}\leq C_p\|X\|_{L_p(\Gamma_{n,\e}^{(2)})}\]
for all $X\in L_p(\Gamma_{n,\e}^{(2)})$. In particular, $C_p$ is independent of $n$, $\e$, and $\mu_1,\ldots,\mu_n$.
\end{lemma}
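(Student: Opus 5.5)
Since $\wt{\alpha}_\theta$ is a state-preserving $*$-automorphism, its symmetric extension $\wt{\alpha}_\theta^{(p)}$ is a strongly continuous group of isometries of $L_p(\Gamma_{n,\e}^{(2)})$; it is $2\pi$-periodic, because $R_{\theta+2\pi}=R_\theta$. The plan is a transference argument in the style of Coifman--Weiss and Berkson--Gillespie--Muhly. We bound the principal value integral $T_n^{(p)}(X)=\frac{1}{\sqrt{2\pi}}{\rm p.v.}\int_{-\pi/2}^{\pi/2}\wt{\alpha}_\theta^{(p)}(X)\kappa(\theta)\,d\theta$ by the norm of the convolution operator with kernel $\kappa\mathbbm{1}_{(-\pi/2,\pi/2)}$ acting on $L_p(\mathbb T;L_p(\Gamma_{n,\e}^{(2)}))$. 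This norm depends only on $p$, since $L_p$ of a von Neumann algebra is UMD with UMD constant depending only on $p$.

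\textbf{Step 1: kernel decomposition.} Near $0$ we have $-\log\cos^2\theta=\theta^2+O(\theta^4)$, so $\kappa(\theta)=\frac{1}{\theta}+r(\theta)$ with $r$ odd and bounded near $0$. Near $\pm\pi/2$, $|\kappa(\theta)|\sim(-2\log|\cos\theta|)^{-1/2}$, which tends to $0$ (only logarithmically, but it is integrable). Hence $\kappa=\frac{1}{\theta}\mathbbm{1}_{|\theta|<\pi/2}+r$ with $r\in L_1(-\pi/2,\pi/2)$. I will pass to the periodic setting by writing $\frac1\theta=\frac{1}{2}\cot\frac{\theta}{2}+(\text{bounded odd function on }[-\pi,\pi])$. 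This gives $\kappa\mathbbm{1}_{|\theta|<\pi/2}=\frac12\cot\frac\theta2+r_1$ with $r_1\in L_1(\mathbb T)$, and $\|r_1\|_{L_1}$ is an absolute constant. The $r_1$ part contributes at most $\frac{1}{\sqrt{2\pi}}\|r_1\|_1\|X\|_p$ by Minkowski's inequality, since each $\wt{\alpha}_\theta^{(p)}$ is an isometry.

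\textbf{Step 2: the conjugate-function part.} It remains to bound $HX:={\rm p.v.}\int_{\mathbb T}\wt{\alpha}_\theta^{(p)}(X)\frac12\cot\frac\theta2\,d\theta$. Fix $X$ and set $F(\psi)=\wt{\alpha}_\psi^{(p)}(X)$, a continuous $L_p(\Gamma_{n,\e}^{(2)})$-valued function on $\mathbb T$. Using the group property and the isometry of $\wt{\alpha}_{\psi}^{(p)}$, we get
\[\|HX\|_p=\|\wt{\alpha}_{\psi}^{(p)}HX\|_p=\Big\|{\rm p.v.}\int F(\psi+\theta)\tfrac12\cot\tfrac\theta2\,d\theta\Big\|_p\]
for every $\psi$. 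Averaging the $p$-th power over $\psi\in\mathbb T$ gives $\|HX\|_p\le 2\pi\,\|\mathcal H\|_{L_p(\mathbb T;L_p)}\|X\|_p$, where $\mathcal H$ is the periodic Hilbert transform. No truncation step is needed, because the group is compact (periodic).

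The vector-valued Hilbert transform is bounded on $L_p(\mathbb T;L_p(\mathcal M))$ for any von Neumann algebra $\mathcal M$ and $1<p<\infty$. One way to see this: for finite-dimensional $\Gamma_{n,\e}^{(2)}$, the Haagerup $L_p$ space is isometric to a weighted Schatten class, which is a subspace of $S_p$, and $S_p$ is UMD with constant depending only on $p$. This is the source of the constant $C_p$, and it does not depend on $n$, $\e$, or the $\mu_j$.

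\textbf{Step 3: principal value and density.} The principal value must be justified and the identity $T_n^{(p)}(D^{1/2p}XD^{1/2p})=D^{1/2p}T_n(X)D^{1/2p}$ must be verified. The second point holds because $\wt{\alpha}_\theta$ commutes with the modular group, so its $L_p$ extension acts as $D^{1/2p}XD^{1/2p}\mapsto D^{1/2p}\wt{\alpha}_\theta(X)D^{1/2p}$. In finite dimensions, $\theta\mapsto\wt{\alpha}_\theta(X)$ is a trigonometric polynomial, so the principal value exists trivially. The estimate then extends by density.

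I expect the main obstacle to be checking the kernel decomposition in Step 1. The singularity is exactly $1/\theta$ at $0$, and the remainder is integrable at the endpoints $\pm\pi/2$. Because the decay there is only logarithmic, $L_1$-integrability is what must be confirmed, and it holds.
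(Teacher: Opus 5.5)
Your proposal is correct and follows the same route as the paper. You split the zero-extended kernel as $\kappa=\frac{1}{2\tan(\theta/2)}+k$ with $k\in L_1(\mathbb T)$, bound the $k$-part by Minkowski's inequality using that each $\wt{\alpha}_\theta^{(p)}$ is an isometry, and bound the conjugate-function part by transference to the vector-valued Hilbert transform on $L_p(\mathbb T;L_p(\Gamma_{n,\e}^{(2)}))$ together with the UMD property of noncommutative $L_p$. The only difference is cosmetic: where the paper cites Coifman--Weiss, you carry out the transference by hand, using the $2\pi$-periodicity of $\theta\mapsto\wt{\alpha}_\theta$ so that no truncation is needed, and that argument is valid.
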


\begin{proof}

Extend $\kappa$ to a function on $\mathbb T=(-\pi,\pi]$ by setting it equal to zero outside $(-\pi/2,\pi/2)$, and consider
\[k(\theta)=\kappa(\theta)-\frac{1}{2\tan(\theta/2)}\in L_1(\mathbb T).\]
Indeed, since $\displaystyle \kappa(\theta)=\frac{1}{\theta} + O(\theta)$ and $\displaystyle \frac{1}{2\tan(\theta/2)}=\frac{1}{\theta}+O(\theta)$ as $\theta\rightarrow 0$, we have
\[k(\theta)=\kappa(\theta)-\frac{1}{2\tan(\theta/2)}=\left ( \kappa(\theta)-\frac{1}{\theta} \right )+\left (\frac{1}{\theta}-\frac{1}{2\tan(\theta/2)} \right )=O(\theta).\]

Since $\kappa(\theta)=1/(2\tan(\theta/2))+k(\theta)$, we decompose $T_n$ as $T_n=H_n+K_n$, where
\begin{align*}
H_n(X)&=\frac{1}{\sqrt{2\pi}}\,{\rm p.v.}\int_{-\pi}^{\pi}\frac{\wt{\alpha}_\theta(X)}{2\tan(\theta/2)}\,d\theta,\\
K_n(X)&=\frac{1}{\sqrt{2\pi}}\,\int_{-\pi}^{\pi}\wt{\alpha}_\theta(X)k(\theta)\,d\theta.
\end{align*}
Here we have extended $\kappa$ by zero outside $(-\pi/2,\pi/2)$, so that the above decomposition agrees with the original definition of $T_n$.

We first estimate $H_n$. Since $(\wt{\alpha}_\theta^{(p)})_{\theta\in\mathbb R}$ is a group of isometries on $L_p(\Gamma_{n,\e}^{(2)})$, the transference principle of Coifman and Weiss \cite{CW1977} yields
\[\|H_n^{(p)}(X)\|_{L_p(\Gamma_{n,\e}^{(2)})}\leq C \|\mathsf H\|_{L_p(\mathbb T;L_p(\Gamma_{n,\e}^{(2)}))\to L_p(\mathbb T;L_p(\Gamma_{n,\e}^{(2)}))}\, \|X\|_{L_p(\Gamma_{n,\e}^{(2)})},\]
where $\mathsf H: L_p(\mathbb T;L_p(\Gamma_{n,\e}^{(2)}))\to L_p(\mathbb T;L_p(\Gamma_{n,\e}^{(2)}))$ is the vector-valued Hilbert transform on $\mathbb T$. Since noncommutative $L_p$-spaces are UMD spaces with UMD constants depending only on $p$, the vector-valued Hilbert transform satisfies
\[\|\mathsf H\|_{L_p(\mathbb T;L_p(\Gamma_{n,\e}^{(2)})) \to L_p(\mathbb T;L_p(\Gamma_{n,\e}^{(2)}))} \leq C_p,\]
where $C_p$ is independent of $n$, $\e$, and the parameters $\mu_1,\ldots,\mu_n$.

For the remaining part, since $k\in L_1(\mathbb T)$ and $\wt{\alpha}_\theta^{(p)}$ is an isometry, Minkowski's inequality gives
\begin{align*}
\|K_n^{(p)}(X)\|_{L_p(\Gamma_{n,\e}^{(2)})}
&\leq \frac{1}{\sqrt{2\pi}}\int_{-\pi}^{\pi}|k(\theta)| \|\wt{\alpha}_\theta^{(p)}(X)\|_{L_p(\Gamma_{n,\e}^{(2)})}\,d\theta\\
&=\frac{\|k\|_{L_1(\mathbb T)}}{\sqrt{2\pi}}\|X\|_{L_p(\Gamma_{n,\e}^{(2)})}.
\end{align*}
Combining the two estimates, we obtain
\[\|T_n^{(p)}(X)\|_{L_p(\Gamma_{n,\e}^{(2)})} \leq \left(C_p+\frac{\|k\|_{L_1(\mathbb T)}}{\sqrt{2\pi}}\right)\|X\|_{L_p(\Gamma_{n,\e}^{(2)})}.\]
Since the kernel $k$ is independent of $n$, $\e$, and $\mu_1,\ldots,\mu_n$, by setting $C_p'=C_p+\frac{\|k\|_{L_1(\mathbb T)}}{\sqrt{2\pi}}$, we obtain
\[\|T_n^{(p)}(X)\|_{L_p(\Gamma_{n,\e}^{(2)})}\leq C_p' \|X\|_{L_p(\Gamma_{n,\e}^{(2)})},\]
where $C_p'$ depends only on $p$.
\end{proof}

We next estimate the projection $\wt{\Pi}^{(n)}$. Let us define the second-coordinate number operator on $H^{(2)}$ by
\[N_y^\e=\sum_{j=1}^n\left(\beta_{n+j}^*\beta_{n+j}+\beta_{-(n+j)}^*\beta_{-(n+j)}\right),\]
and denote by $\wt{N}_y^\e=(\phi_{n,\e}^{(2)})^{-1}N_y^\e\phi_{n,\e}^{(2)}$. Here, $N_y^\e(x_Sy_T)=|T|x_Sy_T$ for all $S,T\subseteq[\pm n]$. On the other hand, by the definition of $\Pi^{(n)}$, we have
\begin{equation}\label{eq402}
    \Pi^{(n)}(x_Sy_T)=\begin{cases}
x_Sy_T,& |T|=1,\\
0,& |T|\neq1.
\end{cases}
\end{equation}
Thus, $\Pi^{(n)}$ is precisely the spectral projection of $N_y^\e$ corresponding to the eigenvalue $1$, i.e. $\Pi^{(n)}={\bf 1}_{\left\{1\right\}}(N_y^\e)$.

\begin{lemma}\label{lem:Pi-bound}
Let $1<p<\infty$. The operator $\wt{\Pi}^{(n)}$ extends to a bounded operator on $L_p(\Gamma_{n,\e}^{(2)})$, and there exists a constant $C_p>0$, depending only on $p$, such that
\[\|(\wt{\Pi}^{(n)})^{(p)}\|_{L_p(\Gamma_{n,\e}^{(2)})\to L_p(\Gamma_{n,\e}^{(2)})}\leq C_p.\]
\end{lemma}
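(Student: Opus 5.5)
The plan is to realize $\wt{\Pi}^{(n)}$ as an average of a one-parameter group of state-preserving $*$-automorphisms of $\Gamma_{n,\e}^{(2)}$ against a Fourier coefficient. Stein's complex interpolation will enter only to control the non-tracial symmetric extensions uniformly.

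The group is generated by the second-coordinate number operator. For $z\in\com$ consider $S_z=e^{zN_y^\e}$ on $H^{(2)}$, and in particular the unitaries $V_t=e^{itN_y^\e}$, $t\in\mathbb T$. Since $N_y^\e(x_Sy_T)=|T|x_Sy_T$, we have
\[\Pi^{(n)}=\frac{1}{2\pi}\int_{-\pi}^{\pi}e^{-it}V_t\,dt,\]
so it suffices to show that each $\wt V_t=(\phi_{n,\e}^{(2)})^{-1}V_t\phi_{n,\e}^{(2)}$ extends to an $L_p$-isometry (or a contraction) of $L_p(\Gamma_{n,\e}^{(2)})$, uniformly in $t$. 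Minkowski's inequality then gives $\|(\wt{\Pi}^{(n)})^{(p)}\|\le 1$, with no dependence on $n$, $\e$ or the $\mu_j$.

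To handle $\wt V_t$, observe that $V_t$ is exactly the phase rotation of the $y$-modes. It multiplies each $y_j$, $j\in[\pm n]$, by $e^{it}$, so it acts as $\beta_{\pm(n+j)}\mapsto e^{-it}\beta_{\pm(n+j)}$ and $\beta^*_{\pm(n+j)}\mapsto e^{it}\beta^*_{\pm(n+j)}$. Consequently $\gamma_{n+j}=\mu_j\beta_{-(n+j)}+\mu_j^{-1}\beta_{n+j}^*$ is not simply rescaled, because its two pieces pick up the opposite phases $e^{\mp it}$. To fix this, I would compose with the unitary $W_t$ that multiplies $y_{-j}$ by $e^{-it}$ and $y_j$ by $e^{it}$. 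This is a gauge transformation implemented by conjugation with a diagonal unitary in $M_2^{\otimes 2n}$ (analogous to $R_\theta$), and under it $\gamma_{n+j}\mapsto e^{it}\gamma_{n+j}$ while $\gamma_j$ is fixed.

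Conjugation by $W_t$ fixes ${\bf 1}$, so it is a state-preserving $*$-automorphism by the argument of Proposition \ref{prop:rotation-baby-fock}, and hence an $L_p$-isometry commuting with the modular group (Proposition \ref{prop-indep}(i)). Its eigen-decomposition is indexed by the charge $\#\{j\in T,\,j>0\}-\#\{j\in T,\,j<0\}$ rather than by $|T|$. I would therefore also use the two gauges separately: the independent unitaries $e^{is\sum_j\beta^*_{n+j}\beta_{n+j}}$ and $e^{iu\sum_j\beta^*_{-(n+j)}\beta_{-(n+j)}}$. These are not automorphisms individually, but their $L_p$-boundedness can be studied through the relation of $\gamma_{n+j}^*\gamma_{n+j}$ to $D^{(2)}$ (Proposition \ref{prop-indep}(iv)).

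This is where Stein interpolation comes in. On the Haagerup side, $D^{1/2p}XD^{1/2p}$ and the modular group $\sigma_s(\gamma_{n+j})=\mu_j^{4is}\gamma_{n+j}$ mix the two $y$-modes with weights $\mu_j^{\pm2}$. I would define an analytic family of maps $\wt S_z$ that is a complete contraction on $L_\infty$ for $\Re z=0$ and an isometry on $L_2$, where it is just the unitary $V_t$ on $H^{(2)}$. Stein's theorem would then give uniform $L_p$ bounds for $2\le p<\infty$, and duality, using self-adjointness of $\Pi^{(n)}$, would cover $1<p\le2$.

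The main obstacle is showing that the projection onto $|T|=1$, rather than onto a fixed charge, is bounded at the $L_\infty$ endpoint. If the direct gauge argument fails, my fallback is to express ${\bf 1}_{\{1\}}(N_y^\e)$ through the semigroup $e^{-sN_y^\e}$, which is a second-quantization-type quantum Markov map on the $y$-modes. Then $\Pi^{(n)}$ is the $e^{-s}$-coefficient of the analytic function $e^{-zN_y^\e}$, and a Cauchy integral over a small circle combined with Stein interpolation of this analytic semigroup gives a constant depending only on $p$.
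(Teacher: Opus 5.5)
Your main route fails. The unitaries $\wt V_t=e^{it\wt N_y^\e}$ are not uniformly bounded on $L_p(\Gamma_{n,\e}^{(2)})$ for any $p\neq 2$, so averaging them cannot give the lemma.

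Take all $\mu_j=1$. Since $\phi_{n,\e}^{(2)}\bigl(\prod_{j\in A}\eta_{n+j}\bigr)$ is a $y$-word of length $2|A|$, $\wt V_t$ preserves the commutative algebra $W^*(\eta_{n+1},\ldots,\eta_{2n})$. This algebra is the uniform cube $L_\infty(\{-1,1\}^n)$ with $\eta_{n+j}\leftrightarrow z_j$, sitting in the centralizer, and $\wt V_t$ acts on it as the Walsh multiplier $z_A\mapsto e^{2it|A|}z_A$. Take $t=\pi/4$ and $f=\prod_j(1+iz_j)$. Then $|f|\equiv 2^{n/2}$, but $\wt V_{\pi/4}f=\prod_j(1-z_j)$ has $L_p$-norm $2^{n(1-1/p)}$. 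The ratio $2^{n(1/2-1/p)}$ is unbounded in $n$ for $p>2$, and duality gives the same for $p<2$.

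The other devices you mention do not help:
\begin{enumerate}
\item The separate gauges $e^{is\sum_j\beta_{n+j}^*\beta_{n+j}}$ and $e^{iu\sum_j\beta_{-(n+j)}^*\beta_{-(n+j)}}$ fail the same way. On each one-mode diagonal algebra $\{{\rm Id},\eta_{n+j}\}$ they act as $a+b\eta_{n+j}\mapsto a+e^{is}b\eta_{n+j}$, which is not even positive.
\item The genuine automorphisms $W_t$ only isolate charge $1$, not $|T|=1$.
\item Stein interpolation cannot repair this. For $p\neq2$ it only gives uniform $L_p$-bounds on a sector $\Sigma_{\theta_p}$ with $\theta_p<\pi/2$, and the example above shows the imaginary axis really lies outside the region of boundedness.
\end{enumerate}
Your target bound $\|(\wt\Pi^{(n)})^{(p)}\|\le 1$ is itself false. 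With $\mu_j=1$ and $\e(i,j)=1$ for $i\neq j$, the elements $2^{-1/2}(\gamma_{n+j}+\gamma_{n+j}^*)$ are independent Rademacher variables whose Walsh degree equals the eigenvalue of $N_y^\e$. So $\wt\Pi^{(n)}$ contains the Rademacher projection, whose $L_p$-norm exceeds $1$ for large $n$ and $p$.

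Your fallback, on the other hand, is correct and is exactly the paper's proof. The map $S_t=e^{-t\wt N_y^\e}$ is the $\e$-Ornstein--Uhlenbeck semigroup of the second family of modes. It consists of normal unital completely positive state-preserving maps, is symmetric on $L_2$, and commutes with $\sigma_t^{\tau_{n,\e}^{(2)}}$. Stein's theorem, in its noncommutative form, gives $\theta_p>0$ and $B_p$ with $\sup_{z\in\Sigma_{\theta_p}}\|S_z^{(p)}\|\le B_p$. Then
\[\wt\Pi^{(n)}=\frac{e^{a_p}}{2\pi}\int_{-\pi}^{\pi}e^{is}S_{a_p+is}\,ds,\]
which is precisely your Cauchy integral over the circle $|w|=e^{-a_p}$ in the variable $w=e^{-z}$.

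What you must make explicit is that this circle has to be small: $\arctan(\pi/a_p)<\theta_p$, so that the whole segment $a_p+i[-\pi,\pi]$ lies in $\Sigma_{\theta_p}$. This keeps the contour away from the imaginary axis, and it is why the constant is $C_p=e^{a_p}B_p$ rather than $1$. Drop the gauge and unitary-averaging part and present the fallback as the proof.
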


\begin{proof}
For $t\geq0$, let $S_t=e^{-t\wt{N}_y^\e}$. This is the Ornstein--Uhlenbeck semigroup acting non-trivially only on the system generated by $\{\gamma_{n+1},\ldots,\gamma_{2n}\}$. In particular, $(S_t)_{t\geq0}$ is a semigroup of normal, completely positive, state-preserving contractions on $\Gamma_{n,\e}^{(2)}$. Moreover, $(S_t)_{t\geq0}$ is symmetric on $L_2(\Gamma_{n,\e}^{(2)})$.

By Stein's complex interpolation theorem \cite{Ste1970,JLMX2006}, for any $1<p<\infty$ there exist $\theta_p>0$ and $B_p>0$, depending only on $p$, such that $(S_t^{(p)})_{t\geq0}$ admits an analytic extension $S_z^{(p)}$ to the sector
\[\Sigma_{\theta_p}=\{z\in\mathbb C:|\arg z|<\theta_p\},\]
with $\displaystyle \sup_{z\in\Sigma_{\theta_p}}\|S_z^{(p)}\|_{L_p(\Gamma_{n,\e}^{(2)})\to L_p(\Gamma_{n,\e}^{(2)})} \leq B_p$. Here, $S_z=e^{-z\wt{N}_y^\e}$ for $z\in\mathbb C$.

Choose $a_p>0$ sufficiently large so that $\arctan\frac{\pi}{a_p}<\theta_p$. Then $a_p+is\in\Sigma_{\theta_p}$ for all $-\pi\leq s\leq\pi$. If $\wt{N}_y^\e u=ku$ for some $k\in\mathbb N_0$, then $S_{a_p+is}u=e^{-(a_p+is)k}u$. Hence,
\begin{align*}
\left(\frac{e^{a_p}}{2\pi}\int_{-\pi}^{\pi} e^{is}S_{a_p+is}\,ds\right)u
&=\left(\frac{e^{a_p}}{2\pi}\int_{-\pi}^{\pi}e^{is}e^{-(a_p+is)k}\,ds \right)u \\
&=\left( \frac{e^{a_p(1-k)}}{2\pi} \int_{-\pi}^{\pi}e^{-is(k-1)}\,ds\right)u =
\begin{cases}
u,& k=1,\\
0,& k\neq1.
\end{cases}
\end{align*}
Thus, by \eqref{eq402}, we obtain $\displaystyle \wt{\Pi}^{(n)}=\frac{e^{a_p}}{2\pi}\int_{-\pi}^{\pi}e^{is}S_{a_p+is}\,ds$ on $\Gamma_{n,\e}^{(2)}$, and
\[(\wt{\Pi}^{(n)})^{(p)}=\frac{e^{a_p}}{2\pi}\int_{-\pi}^{\pi}e^{is}S_{a_p+is}^{(p)}\,ds\]
with the following estimate
\[\|(\wt{\Pi}^{(n)})^{(p)}\|_{L_p(\Gamma_{n,\e}^{(2)})\to L_p(\Gamma_{n,\e}^{(2)})} \leq e^{a_p} \sup_{|s|\leq\pi} \|S_{a_p+is}^{(p)}\|_{L_p(\Gamma_{n,\e}^{(2)})\to L_p(\Gamma_{n,\e}^{(2)})} \leq e^{a_p}B_p.\]
Since both $a_p$ and $B_p$ depend only on $p$, setting $C_p=e^{a_p}B_p$ completes the proof.
\end{proof}

Combining \eqref{eq:R-factorization} with Lemma \ref{lem:Tn-bound} and Lemma \ref{lem:Pi-bound}, and using the isometric inclusion of the first copy $\Gamma_{n,\e}$ into
$\Gamma_{n,\e}^{(2)}$, there exists a constant $C_p$ depending only on $p$ such that 
\begin{equation}\label{eq403}
\norm{\mathcal R_n^{(p)}}_{L_p^0(\Gamma_{n,\e})\rightarrow L_p(\Gamma_{n,\e}^{(2)})}\leq C_p.    
\end{equation}
To complete the proof of Theorem \ref{Riesz-Baby-Fock}, it remains to show that $\mathcal R_n$ is an isomorphism onto its range. We first record the $L_2$-identity underlying the reverse estimate.

\begin{lemma}\label{lem:R-L2-isometry}
For all $X,Y\in\Gamma_{n,\e}^0$,
\[\left\langle \mathcal R_n(X), \mathcal R_n(Y)\right\rangle_{L_2(\Gamma_{n,\e}^{(2)})}=\langle X,Y\rangle_{L_2(\Gamma_{n,\e})}.\]
In particular, $\mathcal R_n$ is isometric on $\Gamma_{n,\e}^0$ with respect to the corresponding $L_2$-norms.
\end{lemma}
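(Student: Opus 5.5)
The plan is to work entirely on the GNS space, via Remark \ref{rmk401}(2). Since $\phi_{n,\e}^{(2)}$ is an isometry from $L_2(\Gamma_{n,\e}^{(2)},\tau_{n,\e}^{(2)})$ onto $H^{(2)}$, and it restricts to $\phi_{n,\e}$ on $\Gamma_{n,\e}$, it suffices to prove
\[
\langle \nabla^\e(N^\e)^{-1/2}\xi,\nabla^\e(N^\e)^{-1/2}\eta\rangle_{H^{(2)}}=\langle\xi,\eta\rangle_H
\]
for all $\xi,\eta\in H$ orthogonal to the vacuum ${\bf 1}$. The vectors $\phi_{n,\e}(X)$ with $X\in\Gamma_{n,\e}^0$ are exactly of this type, because $\tau_{n,\e}(X)=\langle {\bf 1},X{\bf 1}\rangle$.

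Write $\nabla^\e=\sum_{j\in[\pm n]}y_j\beta_j$, where $y_j$ acts by left multiplication on $H^{(2)}$. For $\zeta\in H$, the vector $\beta_j\zeta$ lies in $H$, so $y_j\beta_j\zeta\in y_jH$. The subspaces $y_jH$ are mutually orthogonal, as recorded before Lemma \ref{lem:L2-kernel}, so the cross terms with $i\neq j$ vanish. Left multiplication by the self-adjoint unitary $y_j$ (recall $y_j^2={\rm Id}$) is unitary on $H^{(2)}$ for the trace inner product. Together these give
\[
\langle\nabla^\e\zeta,\nabla^\e\zeta'\rangle=\sum_{j\in[\pm n]}\langle\beta_j\zeta,\beta_j\zeta'\rangle_H=\langle N^\e\zeta,\zeta'\rangle_H .
\]
The last equality uses $N^\e=\sum_j\beta_j^*\beta_j$.

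To finish, take $\zeta=(N^\e)^{-1/2}\xi$ and $\zeta'=(N^\e)^{-1/2}\eta$. This is well defined because $N^\e$ is diagonal in the basis $x_S$ with eigenvalue $|S|\ge1$ on ${\bf 1}^\perp$. By self-adjointness, $\langle N^\e(N^\e)^{-1/2}\xi,(N^\e)^{-1/2}\eta\rangle=\langle\xi,\eta\rangle$, which is the claimed identity.

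The only real point is to justify the formula $\phi^{(2)}_{n,\e}(\mathcal R_n(X))=\nabla^\e(N^\e)^{-1/2}\phi_{n,\e}(X)$ stated in Remark \ref{rmk401}(2). I would verify it directly from the proof of Lemma \ref{kernel-rep}, which already shows
\[
\phi^{(2)}_{n,\e}\big(\mu_j\gamma_{n+j}R^\e_j(X)\big)=y_j\,\phi_{n,\e}(R_j^\e X)=y_j\beta_j(N^\e)^{-1/2}\phi_{n,\e}(X),
\]
and the analogous identity with $y_{-j}$ for the $-j$ term. Summing over $j$ then gives the formula.

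The main thing to double-check is the orthogonality and isometry claims for left multiplication by $y_j$ on vectors of $H$. One must confirm that $y_jx_S$ is $\pm$ a basis vector $x_Sy_{\{j\}}$, using the commutation relation \eqref{eq320}. With that, the orthonormal basis $\{x_Sy_T\}$ makes both claims immediate.
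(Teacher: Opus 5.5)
Your proposal is correct and follows essentially the same route as the paper: pass to the GNS space via Remark \ref{rmk401}(2), use the mutual orthogonality of the subspaces $y_jH$ together with the unitarity of left multiplication by $y_j$, and conclude with $\sum_{j\in[\pm n]}\beta_j^*\beta_j=N^\e$. The one addition is that you verify the GNS formula of Remark \ref{rmk401}(2) from the computation $\mu_j\gamma_{n+j}\zeta=y_j\zeta$ and $\mu_j^{-1}\gamma_{n+j}^*\zeta=y_{-j}\zeta$ for $\zeta\in H$, made in the proof of Lemma \ref{kernel-rep}; the paper simply cites that remark.
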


\begin{proof}
Let $\xi=\phi_{n,\e}(X)$ and $\eta=\phi_{n,\e}(Y)$. By Remark \ref{rmk401} (2),
\[\phi_{n,\e}^{(2)}(\mathcal R_n(X))=\sum_{j\in[\pm n]} y_j\beta_j(N^\e)^{-1/2}\xi,\]
and similarly for $Y$. Since the subspaces
$\{y_jH:j\in[\pm n]\}$ are mutually orthogonal,
\begin{align*}
&\left\langle\mathcal R_n(X),\mathcal R_n(Y)\right\rangle_{L_2(\Gamma_{n,\e}^{(2)})}
=\sum_{j\in[\pm n]}\left\langle\beta_j(N^\e)^{-1/2}\xi,\beta_j(N^\e)^{-1/2}\eta\right\rangle_{H}\\
&=\left\langle\xi,(N^\e)^{-1/2}\left(\sum_{j\in[\pm n]}\beta_j^*\beta_j\right)(N^\e)^{-1/2}\eta\right\rangle_H=
\langle\xi,\eta\rangle_H=\langle X,Y\rangle_{L_2(\Gamma_{n,\e})}.
\end{align*}
Since $X$ and $Y$ have mean zero, $(N^\e)^{-1/2}$ is well defined on the corresponding vectors. This proves the claim.
\end{proof}

We now combine the preceding results to complete the proof of Theorem \ref{Riesz-Baby-Fock}.

\begin{proof}[Proof of Theorem \ref{Riesz-Baby-Fock}]

Let $J:\Gamma_{n,\e}\hookrightarrow\Gamma_{n,\e}^{(2)}$ denote the canonical inclusion, and let $J^{(p)}$ be its $L_p$-extension. The upper estimate has already been established in \eqref{eq403}. It remains to prove the reverse inequality.

We first work at the algebraic $L_2$-level, where the adjoints below are taken with respect to the corresponding $L_2$-inner products on $\Gamma_{n,\e}$ and $\Gamma_{n,\e}^{(2)}$. Let
$E:\Gamma_{n,\e}^{(2)}\to\Gamma_{n,\e}$ be the state-preserving conditional expectation onto $\Gamma_{n,\e}$. Then, for $X\in\Gamma_{n,\e}$ and ${\bf Y}\in\Gamma_{n,\e}^{(2)}$,
\[\langle J(X),{\bf Y}\rangle_{L_2(\Gamma_{n,\e}^{(2)})}=\langle J(X){\bf 1},{\bf Y}{\bf 1}\rangle_{H^{(2)}}=\langle X{\bf 1},E({\bf Y}){\bf 1}\rangle_H=\langle X,E({\bf Y})\rangle_{L_2(\Gamma_{n,\e})}.\]
Thus, $J^*=E$ at the algebraic $L_2$-level.

Moreover, $\left(\wt{\Pi}^{(n)}\right)^*=\wt{\Pi}^{(n)}$ with respect to the $L_2$-inner product, while $\kappa(-\theta)=-\kappa(\theta)$ and $\wt{\alpha}_\theta^*=\wt{\alpha}_{-\theta}$ imply $T_n^*=-T_n$ by \eqref{eq404}. Thus, by \eqref{eq:R-factorization},
\begin{equation}\label{eq:R-adjoint-factorization}
\mathcal R_n^*=-ET_n\wt{\Pi}^{(n)}.
\end{equation}

By Proposition \ref{prop-indep}, applied to the doubled system, the first copy $\Gamma_{n,\e}$ is invariant under the modular automorphism group $\sigma_t^{\tau_{n,\e}^{(2)}}$, and $\displaystyle \sigma_t^{\tau_{n,\e}^{(2)}}\Bigr|_{\Gamma_{n,\e}}=\sigma_t^{\tau_{n,\e}}$. Hence we have
$\sigma_t^{\tau_{n,\e}^{(2)}}\circ J=J\circ\sigma_t^{\tau_{n,\e}}$ 
and $E\circ\sigma_t^{\tau_{n,\e}^{(2)}}=\sigma_t^{\tau_{n,\e}}\circ E$. Moreover, $\wt{\alpha}_\theta$ commutes with $\sigma_t^{\tau_{n,\e}^{(2)}}$, and hence so does $T_n$. In addition, by Proposition \ref{prop-indep}, $\wt{N}_y^\e$ commutes with $\sigma_t^{\tau_{n,\e}^{(2)}}$. Since $\wt{\Pi}^{(n)}={\bf 1}_{\{1\}}(\wt{N}_y^\e)$, it follows that $\wt{\Pi}^{(n)}$ also commutes with $\sigma_t^{\tau_{n,\e}^{(2)}}$.

Thus all the maps involved in \eqref{eq:R-factorization} and \eqref{eq:R-adjoint-factorization} are compatible with the corresponding modular automorphism groups, and their canonical $L_p$-extensions are compatible with composition. In particular, $\mathcal R_n^*$ admits a bounded $L_p$-extension $(\mathcal R_n^*)^{(p)}$ satisfying
\[(\mathcal R_n^*)^{(p)}=-E^{(p)}T_n^{(p)}(\wt{\Pi}^{(n)})^{(p)}.\]
Since $E$ is a state-preserving conditional expectation, its $L_p$-extension $E^{(p)}$ is contractive. Hence, Lemma \ref{lem:Tn-bound} and Lemma \ref{lem:Pi-bound} yield
\[\|(\mathcal R_n^*)^{(p)}\|_{L_p(\Gamma_{n,\e}^{(2)})\to L_p(\Gamma_{n,\e})}\leq C_p,\]
where $C_p$ depends only on $p$.

On the other hand, Lemma \ref{lem:R-L2-isometry} shows that, for all $X,Y\in\Gamma_{n,\e}^0$,
\begin{align*}
\langle \mathcal R_n^*\mathcal R_n(X),Y\rangle_{L_2(\Gamma_{n,\e})}&= \langle \mathcal R_n(X),\mathcal R_n(Y)\rangle_{L_2(\Gamma_{n,\e}^{(2)})}\\
&=\langle X,Y\rangle_{L_2(\Gamma_{n,\e})}.
\end{align*}
Therefore, $\mathcal R_n^*\mathcal R_n=I$ on $\Gamma_{n,\e}^0$. By the compatibility of the $L_p$-extensions with composition, we obtain
\[(\mathcal R_n^*)^{(p)}\mathcal R_n^{(p)}( D_{n, \e}^{1/2p} X D_{n, \e}^{1/2p})= D_{n, \e}^{1/2p} X D_{n, \e}^{1/2p},\qquad X\in\Gamma_{n,\e}^0.\]

Since $D_{n, \e}^{1/2p} \Gamma_{n,\e}^0 D_{n, \e}^{1/2p}$ is dense in $L_p^0(\Gamma_{n,\e})$ and both operators are bounded, together with \eqref{eq403} this yields
\[C_p^{-1}\|X\|_{L_p(\Gamma_{n,\e})} \leq \|\mathcal R_n^{(p)}(X)\|_{L_p(\Gamma_{n,\e}^{(2)})} \leq C_p\|X\|_{L_p(\Gamma_{n,\e})}, \qquad X\in L_p^0(\Gamma_{n,\e}),\]
for some $C_p>0$ depending only on $p$. Thus $\mathcal R_n^{(p)}$ is an isomorphism from $L_p^0(\Gamma_{n,\e})$ onto its range. This completes the proof of Theorem \ref{Riesz-Baby-Fock}.
\end{proof}

\bigskip

\section{Dimension-free Riesz transform estimates on biased hypercubes}
\label{sec-main-theorem}

In this section, we return to the biased hypercube and combine the transference results from Section \ref{sec-transference} with the dimension-free estimate for $\mathcal R_n^{(p)}$ obtained in Theorem \ref{Riesz-Baby-Fock}. We first give an affirmative answer to Meyer's problem for the biased Ornstein--Uhlenbeck semigroup. We then study the unweighted Riesz square function under the additional condition \eqref{hz:optimal}.

\subsection{An affirmative answer to Meyer's problem}

We first introduce the doubled commutative embedding that will be used below. Let $\eta_{n+j}=\gamma_{n+j}^*\gamma_{n+j}-\mu_j^{-2}{\rm Id}$ for all $j\in[n]$. Applying Proposition \ref{prop-matrix-realization} to the doubled baby
Fock model, we obtain a state-preserving injective $*$-homomorphism 
\[\Phi_2:L_\infty(\Omega_n\times\Omega_n,\nu\times\nu) \longrightarrow \Gamma_{n,\e}^{(2)} \]
satisfying $\Phi_2(\zeta_j\otimes1)=\eta_j$ and $\Phi_2(1\otimes\zeta_j)=\eta_{n+j}$ for all $j\in[n]$. By Proposition \ref{prop-indep}(iv), applied to the doubled model, the operators $\eta_1,\ldots,\eta_n,\eta_{n+1},\ldots,\eta_{2n}$ commute with $D_{n,\e}^{(2)}$. Hence the image of $\Phi_2$ is contained in the centralizer of $\tau_{n,\e}^{(2)}$, and $\Phi_2$ induces an isometric embedding $\Phi_2^{(p)}:L_p(\Omega_n\times\Omega_n,\nu\times\nu) \longrightarrow L_p(\Gamma_{n,\e}^{(2)})$ for any $1\leq p<\infty$.

Recall that $z_j(\omega)=\omega_j$ and $z_j= \frac{2}{\mu_j^2+\mu_j^{-2}}\zeta_j+\frac{\mu_j^{-2}-\mu_j^2}{\mu_j^2+\mu_j^{-2}}$. Thus,
\begin{align*}
\Phi_2(z_j\otimes1)&=\frac{1}{\mu_j^2+\mu_j^{-2}}(\gamma_j^*\gamma_j-\gamma_j\gamma_j^*),\\
\Phi_2(1\otimes z_j)&=\frac{1}{\mu_j^2+\mu_j^{-2}}(\gamma_{n+j}^*\gamma_{n+j}
-\gamma_{n+j}\gamma_{n+j}^*).
\end{align*}
Consequently,
\begin{align}
\label{eq:Phi2-square}
\Phi_2(1\otimes1-z_j\otimes z_j)=\frac{2}{(\mu_j^2+\mu_j^{-2})^2}\Big( \gamma_j\gamma_j^*\gamma_{n+j}^*\gamma_{n+j}+\gamma_j^*\gamma_j\gamma_{n+j}\gamma_{n+j}^*\Big).
\end{align}

We next record the sign-invariance property needed to apply the noncommutative Khintchine inequality.

\begin{lemma}\label{lem:sign-invariance}
For every $\delta=(\delta_1,\ldots,\delta_n)\in\{-1,1\}^n$, there exists a state-preserving $*$-automorphism $\rho_\delta:\Gamma_{n,\e}^{(2)}\to\Gamma_{n,\e}^{(2)}$ such that
\begin{align*}
\left\{\begin{array}{lll}
\rho_\delta(X)=X,& X\in\Gamma_{n,\e}\\
\rho_\delta(\gamma_{n+j})=\delta_j\gamma_{n+j},&j\in [n]\\
\rho_\delta(\gamma_{n+j}^*)=\delta_j\gamma_{n+j}^*,& j\in[n].
\end{array} \right .    
\end{align*}
In particular, its canonical $L_p$-extension $\rho_\delta^{(p)}$ is an isometry on $L_p(\Gamma_{n,\e}^{(2)})$ for any $1\leq p<\infty$.
\end{lemma}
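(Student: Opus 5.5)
The plan is to realize $\rho_\delta$ as a unitary conjugation on $H^{(2)}$ by a diagonal unitary that fixes the vacuum. This mirrors the proof of Proposition \ref{prop:rotation-baby-fock}, where $\wt{\alpha}_\theta$ was obtained as $X\mapsto\alpha_\theta X\alpha_{-\theta}$.

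For $\delta\in\{-1,1\}^n$, define $V_\delta$ on the orthonormal basis $\{x_Sy_T\}$ of $H^{(2)}$ by
\[V_\delta(x_Sy_T)=\Big(\prod_{t\in T}\delta_{|t|}\Big)x_Sy_T .\]
This is a self-adjoint unitary with $V_\delta{\bf 1}={\bf 1}$. Its key property is how it interacts with the annihilation and creation operators. For the first family, $\beta_{\pm j}$ and $\beta_{\pm j}^*$ only change $S$ and leave $T$ untouched, so they commute with $V_\delta$. For the second family, $\beta_{n+j}$, $\beta_{-(n+j)}$ and their adjoints remove or add exactly one index of absolute value $j$ in $T$. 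They therefore change the eigenvalue of $V_\delta$ by the factor $\delta_j$, which gives
\[V_\delta\beta_{\pm(n+j)}V_\delta=\delta_j\beta_{\pm(n+j)},\qquad V_\delta\beta^*_{\pm(n+j)}V_\delta=\delta_j\beta^*_{\pm(n+j)}.\]
The main bookkeeping point is that these operators are implemented by left multiplication by $y_{\pm j}$ with the sign conventions fixed in Subsection \ref{subsec:doubled-baby-fock}. Multiplying by $y_{\pm j}$ and reordering only produces signs, and conjugation by $V_\delta$ does not affect those signs. So it suffices to check the identities on basis vectors, where the scalar factor is read off directly.

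With these identities in hand, I would conclude that $V_\delta\gamma_jV_\delta=\gamma_j$ and $V_\delta\gamma_{n+j}V_\delta=\delta_j\gamma_{n+j}$, and by taking adjoints the same relations hold for $\gamma_j^*$ and $\gamma_{n+j}^*$. Define $\rho_\delta(X)=V_\delta XV_\delta$. This is a $*$-automorphism of $B(H^{(2)})$ that maps the generators of $\Gamma_{n,\e}^{(2)}$ into $\Gamma_{n,\e}^{(2)}$. Because it is an involution, it restricts to a $*$-automorphism of $\Gamma_{n,\e}^{(2)}$. It acts as the identity on the von Neumann algebra generated by $\gamma_1,\ldots,\gamma_n$, namely $\Gamma_{n,\e}$, by normality. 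It preserves the state because
\[\tau^{(2)}_{n,\e}(\rho_\delta(X))=\langle V_\delta XV_\delta{\bf 1},{\bf 1}\rangle=\langle X{\bf 1},V_\delta{\bf 1}\rangle=\tau^{(2)}_{n,\e}(X).\]

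For the $L_p$ statement, I would use the standard fact that a state-preserving $*$-automorphism commutes with the modular group $\sigma_t^{\tau^{(2)}_{n,\e}}$, by the uniqueness of the modular group. Alternatively, one can check this directly from Proposition \ref{prop-indep}(i), since $\rho_\delta$ multiplies each $\gamma_k$ by a sign. Either way, $\rho_\delta$ fixes the density $D^{(2)}_{n,\e}$, so
\[\rho_\delta^{(p)}\big(D^{1/2p}XD^{1/2p}\big)=D^{1/2p}\rho_\delta(X)D^{1/2p}\]
extends to an isometry, with inverse $\rho_\delta^{(p)}$ itself. I expect the only real obstacle to be the sign verification in the commutation of $V_\delta$ with the $\beta$'s, and the basis-vector check above should handle it routinely.
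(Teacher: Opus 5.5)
Your proposal is correct and matches the paper's proof: the paper uses the same diagonal unitary $U_\delta(x_Sy_T)=\bigl(\prod_{t\in T}\delta_{|t|}\bigr)x_Sy_T$ fixing ${\bf 1}$, checks the same conjugation identities for $\beta_{\pm j}^{\#}$ and $\beta_{\pm(n+j)}^{\#}$, and sets $\rho_\delta(X)=U_\delta XU_\delta^*$. Your justification of the $L_p$-isometry, via commutation with the modular group or directly from Proposition \ref{prop-indep}(i), fills in a step that the paper only asserts.
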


\begin{proof}
Define a unitary $U_\delta$ on $H^{(2)}$ by $U_\delta({\bf 1})={\bf 1}$ and
\[U_\delta(x_Sy_T)=\left(\prod_{t\in T}\delta_{|t|}\right)x_Sy_T, \qquad S,T\subseteq[\pm n].\]
Then we have
\begin{align*}
&U_\delta\beta_jU_\delta^*=\beta_j,\qquad j\in [\pm n],\\    
&U_\delta\beta_j^*U_\delta^*=\beta_j^*,\qquad j\in[\pm n],\\
&U_\delta\beta_{n+j}U_\delta^*=\delta_j\beta_{n+j}, \qquad j\in [n],\\ 
&U_\delta\beta_{n+j}^*U_\delta^*=\delta_j\beta_{n+j}^*, \qquad j\in [n],\\ 
&U_\delta\beta_{-(n+j)}U_\delta^*=\delta_j\beta_{-(n+j)}, \qquad j\in [n],\\ 
&U_\delta\beta_{-(n+j)}^*U_\delta^*=\delta_j\beta_{-(n+j)}^*, \qquad j\in [n] .
\end{align*}

It follows that the unitary conjugation $\rho_\delta(X)=U_\delta XU_\delta^*$ leaves $\Gamma_{n,\e}^{(2)}$ invariant, fixes $\Gamma_{n,\e}$ pointwise, and satisfies $\rho_\delta(\gamma_{n+j})=\delta_j\gamma_{n+j}$ for all $j\in[n]$. In addition, since $U_\delta({\bf 1})={\bf 1}$, we have
\[\tau_{n,\e}^{(2)}\left (\rho_\delta({\bf X})\right )=\tau_{n,\e}^{(2)}\left ( U_{\delta}{\bf X}U_{\delta}^* \right )=\tau_{n,\e}^{(2)}\left ( {\bf X}\right ).\]
Hence its canonical $L_p$-extension $\rho_{\delta}^{(p)}$ is an isometry.
\end{proof}

Let $\ce_{\omega'}$ denote the conditional expectation obtained by integrating with respect to the second variable on $(\Omega_n\times\Omega_n,\nu\times\nu)$. By Proposition \ref{hz:propA1}, for any mean-zero $f$,
\begin{align}
\label{eq:Gamma-Riesz}
\Gamma(N^{-1/2}f,N^{-1/2}f)&=\ce_{\omega'}\left[\sum_{j=1}^n(1\otimes1-z_j\otimes z_j)(|R_jf|^2\otimes1)\right] \notag \\
= & \sum_{j=1}^n \left(\int_{\Omega_n}(1-z_j(z)z_j(z'))\,d\nu(z')\right)\cdot |R_jf(z)|^2 \notag\\
= & \sum_{j=1}^n\frac{2(1+\zeta_j^2)}{(\mu_j^{-2}+\mu_j^2)^2}|R_jf|^2.
\end{align}
Thus, unlike the uniform case, the carr\'e du champ square function $\Gamma(N^{-1/2}f,N^{-1/2}f)^{1/2}$ is a weighted form of the classical unweighted Riesz square function $\left(\sum_{j=1}^n|R_jf|^2\right)^{1/2}$, where the weights depend on
both the coordinate $j$ and the underlying measure $\nu_j$.

Our approach to obtaining an affirmative answer to Meyer's problem consists of two steps. The first step is to overcome the non-tracial nature of the baby Fock model and apply the noncommutative Khintchine inequality to establish
\[\|f\|_{L_p(\Omega_n,\nu)}\approx_p\left\|\left(\sum_{j=1}^n(1\otimes1-z_j\otimes z_j)(|R_jf|^2\otimes1)\right)^{1/2}\right\|_{L_p(\Omega_n\times\Omega_n,\nu\times\nu)}.\]
The second step is to establish
\[\left\|\left(\sum_{j=1}^n (1\otimes1-z_j\otimes z_j)(|R_jf|^2\otimes1)\right)^{1/2}\right\|_{L_p(\Omega_n\times\Omega_n,\nu\times\nu)}\approx_p\left\|\Gamma(N^{-1/2}f,N^{-1/2}f)^{1/2}\right\|_{L_p(\Omega_n,\nu)}\]
to complete the proof. We begin with the first step, which is given by the following dimension-free estimate.

\begin{thm}\label{thm-Riesz-Walsh}
Let $2\leq p<\infty$. There exists a constant $C_p>0$, depending only on $p$, such that, for any mean-zero $f\in L_p(\Omega_n,\nu)$,
\begin{align*}
C_p^{-1}\|f\|_{L_p(\Omega_n,\nu)}&\leq \left\|\left(\sum_{j=1}^n(1\otimes1-z_j\otimes z_j)(|R_jf|^2\otimes1)\right)^{1/2}\right\|_{L_p(\Omega_n\times\Omega_n,\nu\times\nu)} \leq C_p\|f\|_{L_p(\Omega_n,\nu)}.
\end{align*}
In particular, $C_p$ is independent of $n$ and of the product measure $\nu$.
\end{thm}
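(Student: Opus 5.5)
The plan is to start from Theorem \ref{Riesz-Baby-Fock} with $X=\Phi^{(p)}(f)$ and then unfold the right-hand side using the noncommutative Khintchine inequality. Since $\Phi^{(p)}$ is isometric and $\Phi(f)$ has mean zero, Theorem \ref{Riesz-Baby-Fock} gives
\[\|f\|_{L_p(\Omega_n,\nu)}\approx_p\|\mathcal R_n^{(p)}(\Phi^{(p)}(f))\|_{L_p(\Gamma_{n,\e}^{(2)})}.\]
By \eqref{eq218}, $\mathcal R_n(\Phi(f))=\sum_{j=1}^n a_j$. Here $a_j=c_jb_j\Phi(R_jf)$, with
\[c_j=\frac{\sqrt2}{\mu_j^2+\mu_j^{-2}},\qquad b_j=-\gamma_{n+j}\gamma_j^*+\gamma_{n+j}^*\gamma_j.\]
By Lemma \ref{lem:sign-invariance}, $\rho_\delta$ fixes $\Phi(R_jf)$ and $\gamma_j,\gamma_j^*$, and sends $b_j\mapsto\delta_jb_j$. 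Since $\rho_\delta^{(p)}$ is an isometry, $\|\sum_ja_j\|_p=\|\sum_j\delta_ja_j\|_p$ for every $\delta$. Hence the norm equals its Rademacher average $\big(\mathbb E_\delta\|\sum_j\delta_ja_j\|_p^p\big)^{1/p}$.

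The next step is to remove the non-tracial difficulty. By Proposition \ref{prop-indep}(i), $\sigma_t(\gamma_j)=\mu_j^{4it}\gamma_j$ and $\sigma_t(\gamma_{n+j}^*)=\mu_j^{-4it}\gamma_{n+j}^*$. Therefore both $\gamma_{n+j}^*\gamma_j$ and $\gamma_{n+j}\gamma_j^*$ lie in the centralizer of $\tau^{(2)}_{n,\e}$. The same holds for $\Phi(R_jf)$, since it lies in $W^*(\eta_k)$ and the $\eta_k$ commute with the density. So every $a_j$ lies in the centralizer $\mathcal C$, on which the restricted state is a trace, and $D^{1/2p}a_jD^{1/2p}=D^{1/p}a_j$. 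The state-preserving conditional expectation onto $\mathcal C$ then identifies $L_p(\mathcal C,\tau|_{\mathcal C})$ isometrically with a subspace of $L_p(\Gamma^{(2)}_{n,\e})$, and all the sums above live in this tracial $L_p$-space. We then apply the tracial noncommutative Khintchine inequality (Lust-Piquard--Pisier) for Rademacher sums with $2\le p<\infty$:
\[\Big(\mathbb E_\delta\Big\|\sum_j\delta_ja_j\Big\|_p^p\Big)^{1/p}\approx_p\max\Big\{\Big\|\Big(\sum_ja_j^*a_j\Big)^{1/2}\Big\|_p,\ \Big\|\Big(\sum_ja_ja_j^*\Big)^{1/2}\Big\|_p\Big\}.\]
The restriction $p\ge2$ enters exactly here. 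For $p<2$ one only has an infimum over decompositions, which would not match a single square function.

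It remains to compute the squares algebraically, and I expect this to be the main obstacle because of the sign bookkeeping. The function $R_jf$ does not depend on the $j$-th coordinate, so $\Phi(R_jf)\in W^*(\eta_k:k\neq j)$. These elements are even in the spin variables and commute with $\gamma_j^*\gamma_j$ and $\gamma_{n+j}^*\gamma_{n+j}$ by Proposition \ref{prop-indep}(iv). The $b_j$ are also even, so any sign factors $\e(\cdot,\cdot)$ appear squared. Hence $\Phi(R_jf)$ commutes with $b_j^*b_j$ and $b_jb_j^*$, and
\[a_j^*a_j=c_j^2\,b_j^*b_j\,|\Phi(R_jf)|^2.\]
Expanding $b_j^*b_j$ and using \eqref{CR-gamma} for the pair $(j,n+j)$, the cross terms contain $\gamma_{n+j}^2$ or $(\gamma_{n+j}^*)^2$ after commuting, hence vanish. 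The diagonal terms reorder, with squared signs, to $\gamma_j\gamma_j^*\gamma_{n+j}^*\gamma_{n+j}+\gamma_j^*\gamma_j\gamma_{n+j}\gamma_{n+j}^*$. By \eqref{eq:Phi2-square} this gives
\[c_j^2b_j^*b_j=\Phi_2(1\otimes1-z_j\otimes z_j).\]
The same computation, with the roles of the pairs swapped, gives the identical expression for $c_j^2b_jb_j^*$. Therefore both the row and the column square functions equal
\[\Phi_2\Big(\sum_j(1\otimes1-z_j\otimes z_j)(|R_jf|^2\otimes1)\Big).\]
Because $\Phi_2^{(p)}$ is an isometric embedding into the centralizer, and $\Phi_2$ commutes with square roots as a $*$-homomorphism, the maximum in the Khintchine estimate equals the $L_p(\Omega_n\times\Omega_n,\nu\times\nu)$-norm in the statement. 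All the constants come from Theorem \ref{Riesz-Baby-Fock} and the Khintchine inequality, so they depend only on $p$.
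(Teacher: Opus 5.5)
Your proposal is correct and follows the paper's proof essentially step by step: Theorem \ref{Riesz-Baby-Fock} applied to $\Phi^{(p)}(f)$, then sign invariance via $\rho_\delta$, then the centralizer property to bypass non-traciality, then noncommutative Khintchine using $a_j^*a_j=a_ja_j^*$ (which is where $p\geq 2$ enters), and finally the identification with the commutative square function via $\Phi_2$. The only difference is cosmetic: you restrict to the centralizer and apply the tracial Lust-Piquard--Pisier inequality, whereas the paper applies the Haagerup-$L_p$ Khintchine inequality of \cite{HJX2010} directly to $A_jD^{1/p}$. These are equivalent here, since every element involved lies in the centralizer.
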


\begin{proof}
Let $A_j=\frac{\sqrt{2}}{\mu_j^2+\mu_j^{-2}}\left(-\gamma_{n+j}\gamma_j^*+\gamma_{n+j}^*\gamma_j\right)\Phi(R_jf)$ for all $j\in[n]$, and let $D=D_{n,\e}^{(2)}$ for simplicity. By \eqref{eq218} and \eqref{eq219}, \begin{equation}\label{eq500} 
\mathcal R_n^{(p)}(\Phi^{(p)}(f))=D^{1/2p}\left(\sum_{j=1}^nA_j\right)D^{1/2p}.
\end{equation}

We first note that each $A_j$ belongs to the centralizer of $\tau_{n,\e}^{(2)}$. Indeed, $-\gamma_{n+j}\gamma_j^*+\gamma_{n+j}^*\gamma_j$ is fixed by $\sigma_t^{\tau_{n,\e}^{(2)}}$ since $\sigma_t^{\tau_{n,\e}^{(2)}}(\gamma_j)=\mu_j^{4it}\gamma_j$ and $\sigma_t^{\tau_{n,\e}^{(2)}}(\gamma_{n+j})=\mu_j^{4it}\gamma_{n+j}$ by applying Proposition \ref{prop-indep} to the doubled model, and $\Phi(R_jf)$ is contained in the centralizer.

By \eqref{eq205} and \eqref{eq207}, $\Phi(R_jf)$ is generated by $\eta_1,\cdots,\eta_{j-1},\eta_{j+1},\cdots,\eta_n$. Thus, $\Phi(R_jf)$ commutes with $\gamma_j,\gamma_j^*,\gamma_{n+j}$ and $\gamma_{n+j}^*$ by applying Proposition \ref{prop-indep}(iv) to the doubled model. Using the relations \eqref{CR-gamma}, we therefore obtain
\begin{align}
\label{eq:Aj-square}
A_j^*A_j=A_jA_j^*=\frac{2}{(\mu_j^2+\mu_j^{-2})^2}\Big(\gamma_j\gamma_j^*\gamma_{n+j}^*\gamma_{n+j}+\gamma_j^*\gamma_j\gamma_{n+j}\gamma_{n+j}^*\Big)|\Phi(R_jf)|^2.
\end{align}

Since $\Phi^{(p)}$ is isometric, Theorem \ref{Riesz-Baby-Fock} and \eqref{eq500} give
\begin{align*}
&\|f\|_{L_p(\Omega_n,\nu)}=\|\Phi^{(p)}(f)\|_{L_p(\Gamma_{n,\e})}\\
&\approx_p \|\mathcal R_n^{(p)}(\Phi^{(p)}(f))\|_{L_p(\Gamma_{n,\e}^{(2)})}=
\left\|D^{1/2p}\left(\sum_{j=1}^nA_j\right)D^{1/2p}\right\|_{L_p(\Gamma_{n,\e}^{(2)})}=
\left\|\sum_{j=1}^nA_jD^{1/p}\right\|_{L_p(\Gamma_{n,\e}^{(2)})},
\end{align*}
where the last equality follows from the fact that $A_jD^{1/p}=D^{1/p}A_j$ for all $j$.

Let $\ce_\delta$ denote the expectation over independent Rademacher signs $\delta_1,\ldots,\delta_n$. By Lemma
\ref{lem:sign-invariance},
\[\left\|\sum_{j=1}^nA_jD^{1/p} \right\|_{L_p(\Gamma_{n,\e}^{(2)})}= \left(\ce_\delta \left\|\sum_{j=1}^n\delta_jA_jD^{1/p}
\right\|_{L_p(\Gamma_{n,\e}^{(2)})}^p \right)^{1/p}.\]

Although the state $\tau_{n,\e}^{(2)}$ is non-tracial, each $A_j$ belongs to its centralizer. Hence $A_jD^{1/p}=D^{1/p}A_j$, and using $A_j^*A_j=A_jA_j^*$, we obtain
\begin{align*}
\sum_{j=1}^n (A_jD^{1/p})^*(A_jD^{1/p})=D^{1/p} \left(\sum_{j=1}^nA_j^*A_j\right)D^{1/p}=\sum_{j=1}^n(A_jD^{1/p})(A_jD^{1/p})^*.
\end{align*}
Therefore, applying the noncommutative Khintchine inequality \cite[Theorem 6.2]{HJX2010} in the Haagerup $L_p$-space, we obtain
\begin{align*}
\norm{f}_{L_p(\Omega_n,\nu)}&\approx_p \left\|\sum_{j=1}^nA_jD^{1/p}\right\|_{L_p(\Gamma_{n,\e}^{(2)})}\\
&\approx_p\norm{\left(D^{1/p}\left(\sum_{j=1}^nA_j^*A_j\right)D^{1/p}\right)^{1/2}}_{L_p(\Gamma_{n,\e}^{(2)})}=
\norm{D^{1/2p}\left(\sum_{j=1}^nA_j^*A_j\right)^{1/2}D^{1/2p}}_{L_p(\Gamma_{n,\e}^{(2)})}.
\end{align*}
By \eqref{eq:Phi2-square} and \eqref{eq:Aj-square}, $A_j^*A_j=\Phi_2\left((1\otimes1-z_j\otimes z_j)(|R_jf|^2\otimes1)\right)$. Since $\Phi_2$ is a $*$-homomorphism and its image is contained in the centralizer of $\tau_{n,\e}^{(2)}$,
\begin{align*}
D^{1/2p}\left(\sum_{j=1}^nA_j^*A_j\right)^{1/2}D^{1/2p}=
\Phi_2^{(p)}\left(\left(\sum_{j=1}^n(1\otimes1-z_j\otimes z_j)(|R_jf|^2\otimes1)\right)^{1/2}\right).
\end{align*}
Finally, since $\Phi_2^{(p)}$ is an isometry, we obtain
\[\|f\|_{L_p(\Omega_n,\nu)}\approx_p\norm{\left(\sum_{j=1}^n(1\otimes1-z_j\otimes z_j)(|R_jf|^2\otimes1)\right)^{1/2}}_{L_p(\Omega_n\times\Omega_n,\nu\times\nu)}.\]
\end{proof}

We now turn to the second step.

\begin{thm}\label{thm-Riesz-G}
Let $2\leq p<\infty$. There exists a constant $C_p>0$, depending only on $p$, such that
\[C_p^{-1}\|f\|_{L_p(\Omega_n,\nu)} \leq \|\Gamma(N^{-1/2}f,N^{-1/2}f)^{1/2}\|_{L_p(\Omega_n,\nu)} \leq C_p\|f\|_{L_p(\Omega_n,\nu)}\]
for all mean-zero $f\in L_p(\Omega_n,\nu)$. The constant $C_p$ is independent of $n$ and $\nu$.
\end{thm}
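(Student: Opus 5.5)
The plan is to derive Theorem \ref{thm-Riesz-G} from Theorem \ref{thm-Riesz-Walsh} by comparing
\[G(z,z')=\sum_{j=1}^n(1-z_j(z)z_j(z'))\,|R_jf(z)|^2\ \geq 0\]
with its conditional expectation. By \eqref{eq:Gamma-Riesz}, $\ce_{\omega'}G=\Gamma(N^{-1/2}f,N^{-1/2}f)$. Since $\|G^{1/2}\|_p^2=\|G\|_{p/2}$, everything reduces to comparing $\|G\|_{L_{p/2}(\nu\times\nu)}$ with $\|\ce_{\omega'}G\|_{L_{p/2}(\nu)}$.

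\emph{Upper estimate.} Since $p/2\geq 1$, the conditional expectation $\ce_{\omega'}$ is a contraction on $L_{p/2}(\nu\times\nu)$. Hence
\[\|\Gamma(N^{-1/2}f,N^{-1/2}f)^{1/2}\|_p=\|\ce_{\omega'}G\|_{p/2}^{1/2}\leq\|G\|_{p/2}^{1/2}=\|G^{1/2}\|_p\leq C_p\|f\|_p,\]
where the last inequality is the upper bound in Theorem \ref{thm-Riesz-Walsh}. This step should be routine.

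\emph{Lower estimate.} A pointwise comparison $G\lesssim\ce_{\omega'}G$ is false. The weight $1-z_jz_j'$ takes the value $2$ on an event of small $\nu_j$-probability, while its $\omega'$-average $2(1+\zeta_j^2)/(\mu_j^{-2}+\mu_j^2)^2$ can be tiny. A Rosenthal-type inequality in the independent variables $z_j'$ also does not give uniformity in $\nu$. I would therefore use duality with a mean-zero test function $g\in L_{p'}$, $1<p'\leq2$. Polarising the $L_2$-identity $\int\Gamma(F,F)\,d\nu=\|N^{1/2}F\|_2^2$ gives
\[\langle f,g\rangle=\int\Gamma(N^{-1/2}f,N^{-1/2}g)\,d\nu .\]
The pointwise Cauchy--Schwarz inequality for the positive form $\Gamma$ (its explicit formula is a weighted sum of $\overline{D_jF}D_jG$ with nonnegative weights), together with H\"older's inequality, then yields
\[\|f\|_p\leq\|\Gamma(N^{-1/2}f,N^{-1/2}f)^{1/2}\|_p\cdot\sup_{\|g\|_{p'}\leq1}\|\Gamma(N^{-1/2}g,N^{-1/2}g)^{1/2}\|_{p'} .\]
So the lower bound for $p\geq2$ reduces to an \emph{upper} bound for the intrinsic square function at the exponent $q=p'\in(1,2]$.

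\emph{Main obstacle: the upper bound at $q=p'\in(1,2]$.} Here the contraction argument fails, because by concavity $\ce_{\omega'}G^{1/2}\leq(\ce_{\omega'}G)^{1/2}$ goes the wrong way. I would return to the baby Fock side, since Theorem \ref{Riesz-Baby-Fock} holds for all $1<q<\infty$. The quantity $\|(\ce_{\omega'}G)^{1/2}\|_q$ is the norm of the column $(\ce_{\omega'}$-conditional$)$ square function of $\sum_jA_jD^{1/q}$ relative to the conditional expectation $E$ onto the first copy $\Gamma_{n,\e}$. Indeed, $E(A_j^*A_j)$ corresponds under $\Phi$ to the $j$-th summand of $\Gamma$. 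I would then try to prove
\[\Bigl\|\Bigl(\sum_j E(A_j^*A_j)\Bigr)^{1/2}D^{1/q}\Bigr\|_q\lesssim\Bigl\|\sum_jA_jD^{1/q}\Bigr\|_q\lesssim_q\|f\|_q .\]
The second inequality comes from Theorem \ref{Riesz-Baby-Fock} and \eqref{eq500}. For the first, I would use Lemma \ref{lem:sign-invariance} to write the left side as a Rademacher average. The intended route is then a Junge-type conditional (column/row) Khintchine lower estimate for $q\leq2$, exploiting that $A_j=A_j'\,\Phi(R_jf)$ with $A_j'$ normal and independent of the first copy. This is the step I am least sure of. If it fails, the fallback is a direct $L_q$-boundedness argument for $f\mapsto(\Gamma(N^{-1/2}f,N^{-1/2}f))^{1/2}$ on the baby Fock model, via the factorisation $\mathcal R_n^*=-ET_n\wt\Pi^{(n)}$.
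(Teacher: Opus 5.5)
Your upper estimate is the paper's argument: contractivity of $\ce_{\omega'}$ on $L_{p/2}$, followed by Theorem \ref{thm-Riesz-Walsh}.

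The lower estimate has a genuine gap. Your duality step is fine, but it reduces the claim to a dimension-free bound $\|\Gamma(N^{-1/2}g,N^{-1/2}g)^{1/2}\|_q\lesssim_q\|g\|_q$ at $q=p'\in(1,2)$. That bound is false, already for the uniform cube.

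Take $\mu_j=1$, so that $\Gamma(F,F)=\sum_j|(I-E_j)F|^2$. Let $F=\mathbf 1_{\{x_0\}}-2^{-n}$ and $g=N^{1/2}F$.
\begin{itemize}
\item $\Gamma(F,F)$ equals $n/4$ at $x_0$, $1/4$ at each of the $n$ neighbours of $x_0$, and $0$ elsewhere. Hence $\|\Gamma(F,F)^{1/2}\|_q\approx 2^{-n/q}n^{1/q}$ for $q<2$.
\item On the other hand, $\|F\|_q\approx 2^{-n/q}$ and $\|NF\|_q\approx 2^{-n/q}n$. The moment inequality $\|N^{1/2}F\|_q\lesssim_q\|F\|_q^{1/2}\|NF\|_q^{1/2}$ holds dimension-free, by uniform analyticity of symmetric Markov semigroups on $L_q$. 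It gives $\|g\|_q\lesssim 2^{-n/q}n^{1/2}$.
\item The ratio therefore grows like $n^{1/q-1/2}\to\infty$.
\end{itemize}
This is the known reason Riesz square-function estimates on the cube are stated only for $p\geq2$.

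The example also shows concretely why your conditional Khintchine step cannot hold. For $q<2$, noncommutative Khintchine bounds $\|\sum_j\delta_jA_jD^{1/q}\|_q$ from below only by an infimum over row/column decompositions. That infimum can be much smaller than the row and column square functions, which coincide here. Passing to the conditional square function only makes things worse, since $\int(\ce_{\omega'}G)^{q/2}\geq\int G^{q/2}$ by concavity when $q\leq 2$. Your fallback via $\mathcal R_n^*$ aims at the same false inequality. So the proposed lower bound only closes at $p=2$.

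The missing idea is that for $p\geq2$ the lower bound should not be dualised at all. The paper invokes the Meyer inequality of Bakry and Junge--Mei, $\|N^{1/2}g\|_p\leq C_p\|\Gamma(g,g)^{1/2}\|_p$ for $2\leq p<\infty$. This holds for any symmetric Markov semigroup without a curvature assumption. Applying it to $g=N^{-1/2}f$ gives the lower estimate with a constant depending only on $p$, hence independent of $n$ and $\nu$.
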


\begin{proof}
Since $p\geq2$, we have $p/2\geq1$, and the conditional expectation $\ce_{\omega'}$ is contractive on $L_{p/2}$. Hence, by \eqref{eq:Gamma-Riesz},
\begin{align*}
\left\|\Gamma(N^{-1/2}f,N^{-1/2}f)^{1/2}\right\|_{L_p(\Omega_n,\nu)}^2&=
\left\|\ce_{\omega'}\left[\sum_{j=1}^n(1\otimes1-z_j\otimes z_j)(|R_jf|^2\otimes1)\right]\right\|_{L_{p/2}(\Omega_n,\nu)}\\
&\leq \left\|\sum_{j=1}^n(1\otimes1-z_j\otimes z_j)(|R_jf|^2\otimes1)\right\|_{L_{p/2}(\Omega_n\times\Omega_n,\nu\times\nu)}\\
&=\left\|\left (\sum_{j=1}^n(1\otimes1-z_j\otimes z_j)(|R_jf|^2\otimes1) \right )^{1/2}\right\|_{L_{p}(\Omega_n\times\Omega_n,\nu\times\nu)}^2.
\end{align*}
Then, by Theorem \ref{thm-Riesz-Walsh}, we obtain
\begin{equation}\label{eq502}
\left\|\Gamma(N^{-1/2}f,N^{-1/2}f)^{1/2}\right\|_{L_p(\Omega_n,\nu)} \leq C_p\|f\|_{L_p(\Omega_n,\nu)}.    
\end{equation}

For the reverse estimate, we use the Meyer inequality of Bakry and Junge-Mei
\cite{Bak1985,JM2010},
\begin{equation}\label{eq501}
\|N^{1/2}g\|_{L_p(\Omega_n,\nu)} \leq C_p\|\Gamma(g,g)^{1/2}\|_{L_p(\Omega_n,\nu)}.    
\end{equation}
Since $f$ has mean zero, $g=N^{-1/2}f$ is well defined, so we obtain
\begin{equation}\label{eq503}
    \|f\|_{L_p(\Omega_n,\nu)} \leq C_p\left\|\Gamma(N^{-1/2}f,N^{-1/2}f)^{1/2}\right\|_{L_p(\Omega_n,\nu)}
\end{equation}
by \eqref{eq501}. Thus, combining \eqref{eq502} and \eqref{eq503} completes the proof.
\end{proof}

\begin{remark}
The lower estimate in the proof of Theorem \ref{thm-Riesz-G} is obtained by appealing to the result of Bakry and Junge-Mei \cite{Bak1985,JM2010}. It would be interesting to derive this estimate directly from Theorem
\ref{thm-Riesz-Walsh}.
\end{remark}

\subsection{The unweighted Riesz square function} In the previous subsection, Theorem \ref{thm-Riesz-G} established an affirmative answer to Meyer's problem without any further assumption on the parameters $\mu_j$. We now turn to the second question raised in the Introduction: Under what conditions do we have the dimension-free estimate
\[ \left\|\left(\sum_{j=1}^n|R_jf|^2\right)^{1/2}\right\|_{L_p(\Omega_n,\nu)} \approx_p \left\|\Gamma(N^{-1/2}f,N^{-1/2}f)^{1/2}\right\|_{L_p(\Omega_n,\nu)}\]
for the unweighted Riesz square function?

We first show that the second question has an affirmative answer under the boundedness condition \eqref{hz:optimal}.

\begin{proposition}\label{prop500}
Assume that $\displaystyle M=\sup_{j\in \mathbb N}\max\left\{\mu_j,\mu_j^{-1}\right\}<\infty$ holds. Then, for any $2\leq p<\infty$ and mean-zero $f\in L_p(\Omega_n,\nu)$, we have
\begin{align*}
\left(\frac{2}{1+M^4}\right)^{1/2}\left\|\left(\sum_{j=1}^n|R_jf|^2\right)^{1/2}\right\|_{L_p(\Omega_n,\nu)}
&\leq\left\|\left(\sum_{j=1}^n(1\otimes1-z_j\otimes z_j)(|R_jf|^2\otimes1)\right)^{1/2}\right\|_{L_p(\Omega_n\times\Omega_n,\nu\times\nu)}
\\
&\leq\sqrt{2}\left\|\left(\sum_{j=1}^n|R_jf|^2\right)^{1/2}\right\|_{L_p(\Omega_n,\nu)}.
\end{align*}
In particular, there exists a constant $C_{p,M}>0$, depending only
on $p$ and $M$, such that
\begin{equation}\label{hz:df}
C_{p,M}^{-1}\|f\|_{L_p(\Omega_n,\nu)}\leq\left\|\left(\sum_{j=1}^n|R_jf|^2\right)^{1/2}\right\|_{L_p(\Omega_n,\nu)}\leq C_{p,M}\|f\|_{L_p(\Omega_n,\nu)}
\end{equation}
for all mean-zero $f\in L_p(\Omega_n,\nu)$.

\end{proposition}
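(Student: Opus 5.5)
The plan is to compare the weight $1\otimes1-z_j\otimes z_j$ pointwise with constants, so that the two-sided estimate becomes a pointwise inequality between functions on $\Omega_n\times\Omega_n$. The final statement then follows from Theorem \ref{thm-Riesz-Walsh}.

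For the upper bound, note that $z_j(\omega)z_j(\omega')\in\{-1,1\}$. Hence $0\le 1\otimes1-z_j\otimes z_j\le 2$ pointwise, and so
\[\sum_{j=1}^n(1\otimes1-z_j\otimes z_j)(|R_jf|^2\otimes1)\le 2\Big(\sum_{j=1}^n|R_jf|^2\Big)\otimes1 .\]
Taking square roots and $L_p$-norms, and using that $g\mapsto g\otimes1$ is isometric from $L_p(\Omega_n,\nu)$ into $L_p(\Omega_n\times\Omega_n,\nu\times\nu)$, gives the constant $\sqrt2$.

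A pointwise lower bound on the doubled space fails, since the weight vanishes where $z_j(\omega)=z_j(\omega')$. For the lower bound I will therefore use the conditional expectation $\ce_{\omega'}$, as in the proof of Theorem \ref{thm-Riesz-G}. Since $p/2\ge1$, $\ce_{\omega'}$ is contractive on $L_{p/2}$, and together with \eqref{eq:Gamma-Riesz} this gives
\[\Big\|\Big(\sum_j(1\otimes1-z_j\otimes z_j)(|R_jf|^2\otimes1)\Big)^{1/2}\Big\|_p^2\ \ge\ \Big\|\sum_j\frac{2(1+\zeta_j^2)}{(\mu_j^{-2}+\mu_j^2)^2}|R_jf|^2\Big\|_{p/2}.\]
It then suffices to bound the weight $w_j=\frac{2(1+\zeta_j^2)}{(\mu_j^{2}+\mu_j^{-2})^2}$ from below pointwise.

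Since $\zeta_j^2$ equals $\mu_j^4$ or $\mu_j^{-4}$, we have $1+\zeta_j^2\ge 1+\min(\mu_j^4,\mu_j^{-4})$. Write $t=\max(\mu_j^2,\mu_j^{-2})\in[1,M^2]$. Then
\[w_j\ge \frac{2(1+t^{-2})}{(t+t^{-1})^2}=\frac{2}{t^2+1}\ge\frac{2}{1+M^4}.\]
This yields the stated lower constant $(2/(1+M^4))^{1/2}$. The pointwise bound is the step where the hypothesis on $M$ is used, and it is the only place the weights need care, since $w_j\to0$ as $\mu_j\to0$ or $\mu_j\to\infty$.

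Finally, \eqref{hz:df} follows by combining the two-sided inequality just proved with Theorem \ref{thm-Riesz-Walsh}. The constant is $C_{p,M}=C_p\max\{\sqrt2,((1+M^4)/2)^{1/2}\}$.
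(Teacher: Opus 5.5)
Your proposal is correct and follows essentially the paper's argument. The upper bound comes from $0\le 1\otimes1-z_j\otimes z_j\le 2$. The lower bound comes from Jensen's inequality in $\omega'$ (equivalently, contractivity of $\ce_{\omega'}$ on $L_{p/2}$ for $p\ge 2$), combined with the pointwise lower bound $2/(1+M^4)$ on the averaged weight.

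The only cosmetic difference is how that averaged weight is obtained. You read it off from \eqref{eq:Gamma-Riesz} as $2(1+\zeta_j^2)/(\mu_j^2+\mu_j^{-2})^2$. The paper instead computes $\ce_{\omega'}\bigl[1-z_j(\omega)z_j(\omega')\bigr]=2\,\mathbb{P}_{\omega'}\bigl[z_j(\omega')\neq z_j(\omega)\bigr]$ directly. These are the same quantity, namely $2\mu_j^{\pm2}/(\mu_j^2+\mu_j^{-2})$.
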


\begin{proof}
Let us write $a_j(\omega,\omega')=1-z_j(\omega)z_j(\omega')$ for $\omega,\omega'\in\Omega_n$. Then $a_j(\omega,\omega')\in\{0,2\}$. Let us fix $\omega\in\Omega_n$. Then
\[\mathbb P_{\omega'}\bigl[z_j(\omega')\neq z_j(\omega)\bigr]
=\begin{cases}
\displaystyle \frac{\mu_j^2}{\mu_j^{-2}+\mu_j^2}, &z_j(\omega)=1,\\[8pt]
\displaystyle \frac{\mu_j^{-2}}{\mu_j^{-2}+\mu_j^2}, &z_j(\omega)=-1.
\end{cases}\]
Since $\max\{\mu_j,\mu_j^{-1}\}\leq M$, we have
\[\mathbb P_{\omega'}\bigl[z_j(\omega')\neq z_j(\omega)\bigr]\geq\frac{1}{1+M^4},\]
and hence
\[\ce_{\omega'}a_j(\omega,\omega')=2\cdot \mathbb P_{\omega'}\bigl[z_j(\omega')\neq z_j(\omega)\bigr] \geq\frac{2}{1+M^4}.\]

Since $p\geq2$, the function $t\mapsto t^{p/2}$ is convex on $[0,\infty)$. Thus, by Jensen's inequality,
\begin{align*}
\int_{\Omega_n}\left(\sum_{j=1}^na_j(\omega,\omega')|R_jf(\omega)|^2\right)^{p/2}\,d\nu(\omega')&\geq\left(\sum_{j=1}^n\ce_{\omega'}a_j(\omega,\omega')|R_jf(\omega)|^2\right)^{p/2}\\
&\geq \left(\frac{2}{1+M^4}\right)^{p/2}\left(\sum_{j=1}^n|R_jf(\omega)|^2\right)^{p/2}.
\end{align*}
This implies
\begin{align*}
&\left\|\left(\sum_{j=1}^n(1\otimes1-z_j\otimes z_j)(|R_jf|^2\otimes1)\right)^{1/2}\right\|_{L_p(\Omega_n\times\Omega_n,\nu\times\nu)}\geq
\left(\frac{2}{1+M^4}\right)^{1/2} \left\|\left(\sum_{j=1}^n|R_jf|^2\right)^{1/2}\right\|_{L_p(\Omega_n,\nu)}.
\end{align*}

For the reverse estimate, since
$0\leq a_j(\omega,\omega')\leq2$,
\[\sum_{j=1}^n a_j(\omega,\omega')|R_jf(\omega)|^2 \leq 2\sum_{j=1}^n|R_jf(\omega)|^2.\]
Taking the $L_p$-norm on
$\Omega_n\times\Omega_n$ gives
\[\left\|\left(\sum_{j=1}^n(1\otimes1-z_j\otimes z_j)(|R_jf|^2\otimes1)\right)^{1/2}\right\|_{L_p(\Omega_n\times\Omega_n,\nu\times\nu)}\leq\sqrt{2}\left\|\left(\sum_{j=1}^n|R_jf|^2\right)^{1/2}\right\|_{L_p(\Omega_n,\nu)}. \]

Lastly, since
\[\left\|\left(\sum_{j=1}^n(1\otimes1-z_j\otimes z_j) (|R_jf|^2\otimes1)\right)^{1/2}\right\|_{L_p(\Omega_n\times\Omega_n,\nu\times\nu)}\approx_p \norm{f}_{L_p(\Omega_n,\nu)}\]
by Theorem \ref{thm-Riesz-Walsh}, there exists $C_p>0$ such that
\begin{align}
C_p^{-1}\|f\|_{L_p(\Omega_n,\nu)} \leq \left\| \left( \sum_{j=1}^n
(1\otimes1-z_j\otimes z_j)(|R_jf|^2\otimes1)\right)^{1/2}\right\|_{L_p(\Omega_n\times\Omega_n,\nu\times\nu)} \leq
C_p\|f\|_{L_p(\Omega_n,\nu)}.
\end{align}
Combining all the estimates above, we obtain 
\begin{align*}
 \frac{1}{C_p\sqrt{2}}\norm{f}_{L_p(\Omega_n,\nu)} \leq  \left\|\left(\sum_{j=1}^n|R_jf|^2\right)^{1/2}\right\|_{L_p(\Omega_n,\nu)}\leq C_p \left(\frac{1+M^4}{2}\right)^{1/2}\norm{f}_{L_p(\Omega_n,\nu)}.
\end{align*}
\end{proof}

By Theorem \ref{thm-Riesz-G}, we can establish an affirmative answer to the second question raised in the Introduction under the boundedness condition $ \displaystyle M= \sup_{j\in \mathbb N}\max\left\{\mu_j,\mu_j^{-1}\right\}<\infty$. 

We conclude this subsection by discussing the necessity of the boundedness condition on the parameters. Let $\Omega_\infty=\{-1,1\}^{\mathbb N}$ and equip it with the product
probability measure $\nu_\infty=\bigotimes_{j\geq1}\nu_j$, where each $\nu_j$ is the
probability measure introduced in Subsection \ref{sec:prelim-weighted}. A dimension-free estimate on the finite-dimensional biased hypercubes naturally yields the same estimate on $(\Omega_\infty,\nu_\infty)$ for functions depending on only finitely many coordinates, since each such function can be regarded as a function on some finite-dimensional hypercube. We show that such an estimate forces the following boundedness condition $\displaystyle M= \sup_{j\geq1}\max\{\mu_j,\mu_j^{-1}\}<\infty$.

\begin{proposition}\label{prop:necessity-weight}
Let $2\leq p<\infty$. Suppose that there exists a constant $C_p>0$ such
that
\[\left\|\left(\sum_{j\geq1}|R_jf|^2\right)^{1/2}\right\|_{L_p(\Omega_\infty,\nu_\infty)}\leq C_p\|f\|_{L_p(\Omega_\infty,\nu_\infty)}\]
for every mean-zero function $f$ depending on only finitely many
coordinates. Then 
\[M=\sup_{j\geq1}\max\{\mu_j,\mu_j^{-1}\}<\infty.\]
\end{proposition}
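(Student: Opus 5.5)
Test the assumed inequality on the simplest possible function, $f=\zeta_j$ for a single coordinate $j$, and watch how both sides scale with $\mu_j$. Since $\zeta_j$ has mean zero and depends on one coordinate, it is admissible. By the Remark after \eqref{eq207}, with $A=\{j\}$ and $|A|=1$,
\[
R_j\zeta_j=\frac{\mu_j^{-2}+\mu_j^2}{2}\,\zeta_\emptyset=\frac{\mu_j^{-2}+\mu_j^2}{2},
\qquad R_k\zeta_j=0\quad (k\neq j).
\]
Hence the left-hand side equals the constant $\frac{\mu_j^{-2}+\mu_j^2}{2}$, and its $L_p$ norm is this same number.

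For the right-hand side I will compute $\|\zeta_j\|_{L_p(\nu_j)}$ directly. The function $\zeta_j$ takes the value $\mu_j^2$ with probability $\frac{\mu_j^{-2}}{\mu_j^{-2}+\mu_j^{2}}$ and the value $-\mu_j^{-2}$ with probability $\frac{\mu_j^{2}}{\mu_j^{-2}+\mu_j^{2}}$. Therefore
\[
\|\zeta_j\|_p^p=\frac{\mu_j^{2p-2}+\mu_j^{2-2p}}{\mu_j^{2}+\mu_j^{-2}}.
\]
Write $t=\max\{\mu_j,\mu_j^{-1}\}\ge1$; the expression is symmetric under $\mu_j\leftrightarrow\mu_j^{-1}$. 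For large $t$ this gives $\|\zeta_j\|_p\approx t^{2(p-2)/p}$, more precisely $\|\zeta_j\|_p\le 2^{1/p}t^{2-4/p}$, while the left-hand side is at least $t^2/2$.

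The hypothesis then yields
\[
\frac{t^2}{2}\le C_p\,2^{1/p}\,t^{2-4/p},
\qquad\text{that is,}\qquad t^{4/p}\le 2^{1+1/p}C_p .
\]
Since $p<\infty$, this bounds $t$ uniformly in $j$, so $M\le (2^{1+1/p}C_p)^{p/4}<\infty$.

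The only real work is the elementary asymptotic estimate of $\|\zeta_j\|_p$, done carefully enough to keep the constants uniform. I would check it with the crude bound $\mu_j^{2p-2}+\mu_j^{2-2p}\le 2t^{2p-2}$ and $\mu_j^2+\mu_j^{-2}\ge t^2$, which is all that is needed. Finiteness of $p$ is essential, because the exponent gap $4/p$ vanishes as $p\to\infty$.
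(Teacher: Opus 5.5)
Your proposal is correct and is essentially the paper's proof. The paper tests on the normalized function $\zeta_i/\|\zeta_i\|_p$, which is equivalent by homogeneity, uses the same crude bound $\mu_i^{2p-2}+\mu_i^{2-2p}\leq 2\mu_i^{2p-2}$, and handles $\mu_i\geq1$ and $\mu_i\leq1$ as separate cases instead of your symmetric parameter $t$; it arrives at the same inequality $2^{-1-1/p}t^{4/p}\leq C_p$.
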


\begin{proof}
Fix $i\in\mathbb N$ and let $\displaystyle f_i=\frac{\zeta_i}{\|\zeta_i\|_{L_p(\Omega_\infty,\nu_\infty)}}$. Then $\|f_i\|_{L_p(\Omega_\infty,\nu_\infty)}=1$, and
\[R_jf_i=\delta_{j,i}\frac{\mu_i^{-2}+\mu_i^2}{2\|\zeta_i\|_{L_p(\Omega_\infty,\nu_\infty)}}=\delta_{j,i}\frac{\left ( \mu_i^{-2}+\mu_i^2\right )^{1+1/p}}{2(\mu_i^{-2p+2}+\mu_i^{2p-2})^{1/p}}.\]
Therefore,
\[\left\|\left(\sum_{j\geq1}|R_jf_i|^2\right)^{1/2}\right\|_{L_p(\Omega_\infty,\nu_\infty)}=\frac{(\mu_i^{-2}+\mu_i^2)^{1+1/p}}{2(\mu_i^{-2p+2}+\mu_i^{2p-2})^{1/p}}.\]

Suppose first that $\mu_i\geq1$. Since $p\geq2$, we have
\[\mu_i^{-2p+2}+\mu_i^{2p-2}\leq2\mu_i^{2p-2}.\]
Hence
\[\left\|\left(\sum_{j\geq1}|R_jf_i|^2\right)^{1/2}\right\|_{L_p(\Omega_\infty,\nu_\infty)}
\geq 2^{-1-1/p}\mu_i^{4/p}.\]
By the assumed estimate, we obtain $2^{-1-1/p}\mu_i^{4/p}\leq C_p$. Thus the parameters $\mu_i$ are uniformly bounded whenever $\mu_i\geq1$.

If $0<\mu_i\leq1$, the same argument, with $\mu_i^{-1}$ in place of $\mu_i$, yields $2^{-1-1/p}\mu_i^{-4/p}\leq C_p$. Hence, we obtain $\displaystyle \sup_{i\geq1}\max\{\mu_i,\mu_i^{-1}\}<\infty$.
\end{proof}

Thus, the boundedness condition \eqref{hz:optimal} is necessary for the dimension-free estimate \eqref{hz:df}.

\bigskip

\section{Dimension-free Riesz transform estimates on $q$-Araki--Woods algebras}
\label{sec-q-Araki-Woods}

In this section, we establish dimension-free Riesz transform estimates on $q$-Araki--Woods algebras. We use the notation and the basic facts on $q$-Fock spaces, second quantization, and $q$-Araki--Woods algebras introduced in Subsection \ref{sec:prelim-qaw}. The proof follows the same rotation and transference mechanism as in Section \ref{sec:baby Fock}.
We first realize the Riesz transform through a rotation on the doubled $q$-Fock space, and then obtain the $L_p$-estimate from its kernel representation.

\subsection{The Riesz transform and its kernel representation}

Let $w_1=(1,0)^T$ and $w_2=(0,1)^T$ be the canonical basis of $\com^2$, and let $j:\com\to\com^2$ be the isometric embedding given by $j(\lambda)=\lambda w_1$. Recall that $\real_A^{2d}=\real_A^d\otimes\com^2$. We denote by
\[j_{d,2d}=\Gamma(I\otimes j):\F_q(\real_A^d)\longrightarrow\F_q(\real_A^{2d})\]
the corresponding isometric embedding. In particular, for $f_1,\ldots,f_k\in\real_A^d$,
\[j_{d,2d}(f_1\otimes\cdots\otimes f_k)=(f_1\otimes w_1)\otimes\cdots\otimes(f_k\otimes w_1).\]
For the canonical embedding $h_{d,2d}^{-1}:\Gamma_q(\mathbb R^d_A)\rightarrow \Gamma_q(\mathbb R^{2d}_A)$ in Subsection \ref{sec:prelim-qaw}, we have
\begin{equation}\label{eq600}
\phi_{2d}^{-1}\circ j_{d,2d}\circ \phi_d= h_{d,2d}^{-1}.
\end{equation}

Let $P=\begin{bmatrix}
0&i\\
-i&0
\end{bmatrix}$ and $R_\theta=e^{i\theta P}=\begin{bmatrix}
\cos\theta&-\sin\theta\\
\sin\theta&\cos\theta
\end{bmatrix}$ for $\theta\in\real$. Then $(R_\theta)_{\theta\in\real}$ is the usual rotation group on $\real^2$. Let us define the gradient on $\F_q^{\rm finite}(\real_A^d)$ by
\begin{equation}\label{eq612}
\nabla =i\d\Gamma(I\otimes P)\circ j_{d,2d}:\mathcal{F}_q^{\rm finite}(\mathbb R^d_A)\rightarrow \mathcal{F}_q^{\rm finite}(\mathbb R^{2d}_A),    
\end{equation}
and recall that the number operator on $\F_q(\real_A^d)$ is $N=\d\Gamma(I)$. We define the corresponding Riesz transform by
\begin{equation}
\R=\nabla N^{-1/2}:\F_q^{\rm finite}(\real_A^d)\ominus\com\Omega\longrightarrow \F_q^{\rm finite}(\real_A^{2d}).    
\end{equation}

The following identity is the $q$-Fock analogue of the relation $\sum_{j\in[\pm n]}\beta_j^*\beta_j=N^\e$ used in the proof of Lemma \ref{lem:R-L2-isometry}.

\begin{lemma}\label{lem601}
On $\F_q^{\rm finite}(\real_A^d)$, we have $\nabla^*\nabla=N$. Consequently, $\R$ extends to an isometry from $\F_q(\real_A^d)\ominus\com\Omega$ into $\F_q(\real_A^{2d})$.
\end{lemma}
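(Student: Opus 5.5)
The plan is a direct computation of the $q$-inner product $\lara{\nabla\xi,\nabla\eta}_q$ on simple tensors, followed by bilinearity and a density argument.

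\textbf{Explicit formula for $\nabla$.} I first compute the one-particle action of $i(I\otimes P)$ on the range of $I\otimes j$. Since $Pw_1=-i\,w_2$, we have $i(I\otimes P)(f\otimes w_1)=f\otimes w_2$. By the definition of $\d\Gamma$, for $f_1,\ldots,f_k\in\real^d_A$,
\[\nabla(f_1\otimes\cdots\otimes f_k)=\sum_{m=1}^k (f_1\otimes w_1)\otimes\cdots\otimes(f_m\otimes w_2)\otimes\cdots\otimes(f_k\otimes w_1).\]
So $\nabla$ maps degree $k$ to degree $k$. It suffices to check
\[\lara{\nabla\xi,\nabla\eta}_q=k\lara{\xi,\eta}_q\]
for simple tensors $\xi=f_1\otimes\cdots\otimes f_k$ and $\eta=g_1\otimes\cdots\otimes g_k$. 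Tensors of different degrees are orthogonal both before and after applying $\nabla$, so this gives $\nabla^*\nabla=N$ on $\F_q^{\rm finite}(\real^d_A)$ by sesquilinearity.

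\textbf{Inner product computation.} The deformed inner product on $\real_A^{2d}=\real^d_A\otimes\com^2$ comes from $A\otimes{\rm Id}_{\com^2}$. Hence
\[\lara{f\otimes w_a,g\otimes w_b}=\lara{f,g}_U\,\delta_{a,b}.\]
Expanding with $P_k$,
\[\lara{\nabla\xi,\nabla\eta}_q=\sum_{m,m'=1}^k\ \sum_{\sigma\in S_k}q^{i(\sigma)}\prod_{r=1}^k\lara{f_r\otimes w_{a_r},\,g_{\sigma(r)}\otimes w_{b_{\sigma(r)}}},\]
where $a$ has its single $w_2$ at position $m$ and $b$ has its single $w_2$ at position $m'$. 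The product is nonzero only if $\sigma(m)=m'$, and in that case it equals $\prod_r\lara{f_r,g_{\sigma(r)}}_U$. Thus for fixed $\sigma$ exactly $k$ pairs $(m,m')$ survive, namely $m'=\sigma(m)$ for $m=1,\ldots,k$. Summing gives $k\sum_\sigma q^{i(\sigma)}\prod_r\lara{f_r,g_{\sigma(r)}}_U=k\lara{\xi,\eta}_q$. This counting step is the only real content, and I expect it to be the main, though mild, obstacle: one has to be careful that the second-copy basis vector $w_2$ is orthogonal to $w_1$ in the deformed inner product. This holds precisely because the generator on the doubled space is $A\otimes{\rm Id}$.

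\textbf{Isometry of $\R$.} For $\xi\in\F_q^{\rm finite}(\real^d_A)\ominus\com\Omega$, the vector $N^{-1/2}\xi$ is again a finite tensor, since $N$ acts diagonally by degree with eigenvalues at least $1$. Then
\[\|\R\xi\|^2=\lara{N^{-1/2}\xi,\nabla^*\nabla N^{-1/2}\xi}=\lara{N^{-1/2}\xi,N^{1/2}\xi}=\|\xi\|^2.\]
Since the finite tensors of positive degree are dense in $\F_q(\real^d_A)\ominus\com\Omega$, $\R$ extends by continuity to an isometry into $\F_q(\real^{2d}_A)$.
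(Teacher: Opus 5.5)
Your proof is correct and follows the same route as the paper. The paper records the degree-wise identity $\langle\nabla u,\nabla v\rangle_q=k\langle u,v\rangle_q$ for $u,v\in(\real_A^d)^{\otimes_q k}$ by citing Lust-Piquard's Lemma 1.2, and then concludes from $N=kI$ on degree $k$. Your permutation-counting argument — $\langle f\otimes w_a,g\otimes w_b\rangle=\langle f,g\rangle_U\,\delta_{a,b}$, with exactly the $k$ pairs $m'=\sigma(m)$ surviving for each $\sigma$ — supplies the computation the paper omits.
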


\begin{proof}
This is the same computation as in \cite[Lemma 1.2]{LP1999}. More precisely, if $u,v\in(\real_A^d)^{\otimes_q k}$, then we have $\langle\nabla u,\nabla v\rangle_q= k\langle u,v\rangle_q$. In addition, since $N=kI$ on $(\real_A^d)^{\otimes_q k}$, the desired identity follows.
\end{proof}

We next introduce the projection that appears in the kernel representation. Let $E=\begin{bmatrix}
0&0\\
0&1
\end{bmatrix}$ and define $N_2=\d\Gamma(I\otimes E)$ on $\F_q^{\rm finite}(\real_A^{2d})$. Thus $N_2$ counts the number of tensor components whose second factor is $w_2$. Let
\[\Pi={\bf 1}_{\{1\}}(N_2)\]
be the spectral projection corresponding to the eigenvalue $1$. Equivalently, if
\[v_j^{(k)}= w_1^{\otimes (j-1)}\otimes w_2\otimes w_1^{\otimes (k-j)}, \qquad 1\leq j\leq k,\]
then $\Pi$ is the orthogonal projection onto the closed subspace
\begin{equation}\label{eq:Pi-K}
\W=\bigoplus_{k=1}^\infty \overline{(\real_A^d)^{\otimes k}\otimes{\rm span}
\{v_1^{(k)},\ldots,v_k^{(k)}\}},
\end{equation}
where the closure is taken with respect to the $q$-inner product.

The following kernel representation is the $q$-Fock version of Lemma \ref{lem:L2-kernel}. The setting is slightly different but its proof is
essentially the same as that of \cite[Lemma 1.3]{LP1999}; we include it here for the reader's convenience.

\begin{lemma}\label{lem-ker-F}
Let the notation be as above.
\begin{enumerate}[(i)]
\item For any $\theta\in (-\pi/2,\pi/2)$, we have
\begin{equation}\label{KR:eq-1}
(\cos\theta)^N=j_{d,2d}^*\circ \Gamma(I\otimes R_\theta)\circ j_{d,2d}
=j_{d,2d}^*\circ e^{i\theta\d\Gamma(I\otimes P)}\circ j_{d,2d}.
\end{equation}

\item On $\F_q^{\rm finite}(\real_A^{2d})$, for any $a>0$, we have
\begin{equation}\label{KR:eq-2}
\Pi = \frac{e^a}{2\pi} \int_{-\pi}^{\pi}e^{is}e^{-(a+is)N_2}\,ds.
\end{equation}

\item On $\F_q^{\rm finite}(\real_A^d)\ominus\com\Omega$, we have
\[\nabla\circ (\cos\theta)^{N-1}\sin\theta=\Pi\circ \Gamma(I\otimes R_\theta)\circ j_{d,2d}\]
and, on $\F_q^{\rm finite}(\real_A^d)$, we have
\[N=j_{d,2d}^*\circ  \d\Gamma(I\otimes P)^2\circ j_{d,2d}.\]

\item On $\F_q^{\rm finite}(\real_A^d)\ominus\com\Omega$,
\begin{equation}\label{KR:eq-3}
\R =\frac{1}{\sqrt{2\pi}}\Pi\circ{\rm p.v.}\int_{-\pi/2}^{\pi/2}\frac{{\rm sgn}(\theta)}{\sqrt{-\log\cos^2\theta}}\Gamma(I\otimes R_\theta)\circ j_{d,2d}\,d\theta.
\end{equation}
\end{enumerate}
\end{lemma}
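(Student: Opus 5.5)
The plan is to reduce every item to one-particle computations via Proposition \ref{prop200}, plus one scalar integral for (iv). Two preliminary facts come first.

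\emph{Preliminary facts.} First, since $R_\theta$ and $E$ act on the $\com^2$ factor, they commute with $U_t\otimes{\rm Id}_{\com^2}$. Hence $I\otimes R_\theta$ is a unitary on $\real_A^{2d}$ and $I\otimes E$ is an orthogonal projection there, and the second quantizations $\Gamma(I\otimes R_\theta)$ and $\Gamma(e^{-z(I\otimes E)})$ make sense. Second, the degree-$k$ subspace of $\F_q^{\rm finite}(\real_A^{2d})$ splits into the pieces $(\real_A^d)^{\otimes k}\otimes w_{\epsilon_1}\otimes\cdots\otimes w_{\epsilon_k}$. Pieces with different numbers of $w_2$'s are $q$-orthogonal: $P_k$ only permutes tensor factors, and $\langle w_1,w_2\rangle=0$. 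So $N_2$ is diagonalizable on finite tensors, with eigenvalue $m$ on the span of patterns containing exactly $m$ copies of $w_2$. This is the one point I would check carefully, since it is what makes $\Pi$ an orthogonal projection onto $\W$.

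\emph{(i) and (ii).} For (i), I use \eqref{SQ-eq:1} and $(I\otimes j)^*=I\otimes j^*$ to write
\[j_{d,2d}^*\,\Gamma(I\otimes R_\theta)\,j_{d,2d}=\Gamma\bigl(I\otimes j^*R_\theta j\bigr).\]
Since $j^*R_\theta j=\langle R_\theta w_1,w_1\rangle=\cos\theta$, this equals $\Gamma(\cos\theta\, I)=(\cos\theta)^N$ for $|\theta|<\pi/2$. The second equality in (i) is \eqref{SQ:eq-3} applied to $T=I\otimes P$, using $R_\theta=e^{i\theta P}$. For (ii), by \eqref{SQ:eq-4} we have $e^{-(a+is)N_2}=\Gamma(e^{-(a+is)I\otimes E})$. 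It acts by $e^{-(a+is)m}$ on the $m$-eigenspace of $N_2$, and the Fourier computation already done in Lemma \ref{lem:Pi-bound} then gives $\Pi$.

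\emph{(iii).} Note that $iP=\begin{bmatrix}0&-1\\1&0\end{bmatrix}$ sends $w_1\mapsto w_2$. So for $u=f_1\otimes\cdots\otimes f_k$ we get $\nabla u=\sum_{j=1}^k(f_1\otimes\cdots\otimes f_k)\otimes v_j^{(k)}$, with the obvious regrouping of tensor factors. On the other hand, $\Gamma(I\otimes R_\theta)j_{d,2d}u=\bigotimes_i\bigl(f_i\otimes(\cos\theta\,w_1+\sin\theta\,w_2)\bigr)$. Expanding and keeping only the terms with exactly one $w_2$ gives $\cos^{k-1}\theta\sin\theta\,\nabla u$; extending linearly gives the first identity. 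For the second identity, I differentiate (i) twice at $\theta=0$, which is legitimate on each finite-dimensional degree-$k$ subspace. The left side gives $-N$, and the right side gives $-\,j_{d,2d}^*\,\d\Gamma(I\otimes P)^2\,j_{d,2d}$.

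\emph{(iv).} For $u$ of degree $k\geq1$, by (iii) the right-hand side of \eqref{KR:eq-3} equals $\frac{1}{\sqrt{2\pi}}\bigl(\int_{-\pi/2}^{\pi/2}\kappa(\theta)\cos^{k-1}\theta\sin\theta\,d\theta\bigr)\nabla u$. Once $\Pi$ is applied, the integrand is $O(1)$ near $0$, so the integral converges absolutely. I evaluate it by symmetry and the substitutions $t=\cos\theta$, then $t=e^{-s}$:
\[2\int_0^1\frac{t^{k-1}}{\sqrt{-2\log t}}\,dt=2\int_0^\infty\frac{e^{-ks}}{\sqrt{2s}}\,ds=\sqrt{2\pi/k}.\]
The whole expression therefore equals $k^{-1/2}\nabla u=\nabla N^{-1/2}u=\R u$. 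Interchanging $\Pi$ with the p.v. integral is harmless, since on each degree everything is finite-dimensional and $\Pi$ is bounded.
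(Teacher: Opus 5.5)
Your proof is correct, and its core matches the paper's. For (ii) you use the same spectral/Fourier identity. For the first identity in (iii) you expand $\Gamma(I\otimes R_\theta)j_{d,2d}u$ into $w_1/w_2$ patterns and keep the single-$w_2$ terms. For (iv) you reduce to a scalar integral in each degree.

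The differences are in the supporting steps.
\begin{itemize}
\item For (i), the paper applies $j_{d,2d}^*$ to that same expansion and keeps only the $S=\emptyset$ term. You instead use functoriality of $\Gamma$ to collapse everything to the one-particle scalar $j^*R_\theta j=\cos\theta$. Both are immediate.
\item For $N=j_{d,2d}^*\,\d\Gamma(I\otimes P)^2\,j_{d,2d}$, the difference is more substantive. The paper invokes Lemma \ref{lem601}, i.e.\ $\nabla^*\nabla=N$, whose proof it defers to Lust-Piquard's $q$-inner-product computation. It combines this with $\nabla=i\,\d\Gamma(I\otimes P)\,j_{d,2d}$ and the symmetry of $\d\Gamma(I\otimes P)$. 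You instead differentiate (i) twice at $\theta=0$ on the finite-dimensional degree-$k$ subspaces. Your route is self-contained, and together with the symmetry of $\d\Gamma(I\otimes P)$ it actually reproves Lemma \ref{lem601}, which the paper needs later for $\wt{\R}_d^*\wt{\R}_d=I$.
\item In (iv), you explicitly evaluate $\int_{-\pi/2}^{\pi/2}\kappa(\theta)\cos^{k-1}\theta\sin\theta\,d\theta=\sqrt{2\pi/k}$ (via $t=\cos\theta$, then $t=e^{-s}$). You also justify commuting $\Pi$ with the principal value by finite-dimensionality in each degree. The paper only asserts both points, the latter by reference to Lemma \ref{lem:L2-kernel}, whose proof it does not give.
\end{itemize}

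One small step you leave implicit is that the principal value exists before $\Pi$ is applied. This follows directly from the expansion: the pattern with no $w_2$ has the odd coefficient $(\cos\theta)^k\kappa(\theta)$, so its symmetric truncations vanish. Every other pattern carries a factor $\sin\theta$ and is absolutely integrable.
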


\begin{proof}
Let $u=f_1\otimes\cdots\otimes f_k$. Then
\begin{align}
\notag\left (\Gamma(I\otimes R_\theta)\circ j_{d,2d}\right )(u)
&= \bigotimes_{r=1}^k \left( f_r\otimes (\cos\theta\,w_1+\sin\theta\,w_2)\right)\\
\label{eq601}&=\sum_{S\subseteq[k]} (\cos\theta)^{k-|S|}(\sin\theta)^{|S|} \bigotimes_{r=1}^k \left(f_r\otimes w_{1+\mathbf 1_S(r)}\right).
\end{align}

First, let us prove (i). For any $S\subseteq[k]$, we have
\[j_{d,2d}^*\left(\bigotimes_{r=1}^k\left(f_r\otimes w_{1+\mathbf 1_S(r)}\right)\right)
=\delta_{S,\emptyset}\cdot \bigotimes_{r=1}^k f_r.\]
Therefore, applying $j_{d,2d}^*$ to \eqref{eq601}, we obtain
\[ \left (j_{d,2d}^*\circ \Gamma(I\otimes R_\theta)\circ j_{d,2d}\right )(u)
= (\cos\theta)^k u=(\cos\theta)^N u.\]
Since $R_\theta=e^{i\theta P}$, Proposition \ref{prop200} (c) gives
\[\Gamma(I\otimes R_\theta)=e^{i\theta\d\Gamma(I\otimes P)}.\]
This completes the proof of (i). 

For (ii), if $N_2\xi=m\xi$ for some $m\in\mathbb N_0$, then
\begin{align*}
\left(\frac{e^a}{2\pi}\int_{-\pi}^{\pi}e^{is}e^{-(a+is)N_2}\,ds \right)\xi = \frac{e^{a(1-m)}}{2\pi}
\int_{-\pi}^{\pi}e^{-is(m-1)}\,ds\,\xi =
\begin{cases}
\xi,&m=1,\\
0,&m\neq1.
\end{cases}
\end{align*}
Thus, we obtain $\Pi\displaystyle = \frac{e^a}{2\pi} \int_{-\pi}^{\pi}e^{is}e^{-(a+is)N_2}\,ds$.

For (iii). For any $S\subseteq[k]$ and $u=f_1\otimes\cdots\otimes f_k$, we have
\[\Pi\left(\bigotimes_{r=1}^k\left(f_r\otimes w_{1+\mathbf 1_S(r)}\right)\right)
=\mathbf 1_{\{|S|=1\}}\bigotimes_{r=1}^k\left(f_r\otimes w_{1+\mathbf 1_S(r)}\right).\]
Therefore, applying $\Pi$ to \eqref{eq601}, we obtain
\begin{align}\label{eq602}
\left (\Pi\circ \Gamma(I\otimes R_\theta)\circ j_{d,2d}\right )(u)=(\cos\theta)^{k-1}\sin\theta \sum_{r=1}^k (f_1\otimes w_1)\otimes\cdots\otimes (f_r\otimes w_2)\otimes\cdots\otimes (f_k\otimes w_1).
\end{align}

On the other hand, since $Pw_1=-iw_2$, the gradient $\nabla=i\d\Gamma(I\otimes P)\circ j_{d,2d}$ is computed as
\begin{align}
\notag \nabla u&=i\sum_{r=1}^k
(f_1\otimes w_1)\otimes\cdots\otimes
\bigl(f_r\otimes Pw_1\bigr)\otimes\cdots\otimes
(f_k\otimes w_1)\\
\label{eq603}&=\sum_{r=1}^k(f_1\otimes w_1)\otimes\cdots\otimes(f_r\otimes w_2)\otimes\cdots\otimes(f_k\otimes w_1).
\end{align}
Combining \eqref{eq602} and \eqref{eq603}, this proves the first part of (iii):
\[\left (\Pi\circ \Gamma(I\otimes R_\theta)\circ j_{d,2d}\right )(u) =(\cos\theta)^{k-1}\sin\theta\,\nabla u=\nabla(\cos\theta)^{N-1}\sin\theta\,u.\]

The second assertion of (iii) immediately follows from Lemma \ref{lem601} and \eqref{eq612}.

For (iv), if $u$ is homogeneous of degree $k\geq1$, then
\begin{align*}
&\frac{1}{\sqrt{2\pi}}\Pi\,{\rm p.v.}\int_{-\pi/2}^{\pi/2}\frac{{\rm sgn}(\theta)}{\sqrt{-\log\cos^2\theta}}
(\Gamma(I\otimes R_\theta)\circ j_{d,2d})(u)\,d\theta\\
&=\frac{1}{\sqrt{2\pi}}\int_{-\pi/2}^{\pi/2}\frac{(\cos\theta)^{k-1}|\sin\theta|}{\sqrt{-\log\cos^2\theta}}\,d\theta\cdot\nabla u =\frac{1}{\sqrt{k}}\nabla u=\nabla N^{-1/2}(u).
\end{align*}
The first equality (i.e., that the projection $\Pi$ commutes with the principal value integral) follows as in the proof of Lemma \ref{lem:L2-kernel}.
\end{proof}

We now pass from the $q$-Fock space $\mathcal{F}_q(\mathbb R^d_A)$ to the corresponding $q$-Araki--Woods algebra $\Gamma_q(\mathbb R^d_A)$. For this purpose, we first record two consequences of the second quantization construction from Subsection \ref{sec:prelim-qaw}. The proof follows the strategy of \cite[Lemma 3.1]{LP1999}.

\begin{lemma}\label{lem600}
\begin{enumerate}[(i)]
\item Let $\T:\real^l\to\real^h$ be either an orthogonal transformation when $h=l$, or the canonical projection $(x_1,\cdots,x_l)\in \mathbb R^l\mapsto (x_1,\cdots,x_h)\in \mathbb R^h$ when $1\leq h<l$, and denote its complexification from $\com^l$ to
$\com^h$ by the same symbol. Then, for $T=I\otimes\T:\real_A^{ld}\longrightarrow\real_A^{hd}$, we have
\begin{equation}\label{eq609}
[\Gamma_{A_l,A_h}(I\otimes \T)](X)=\Gamma(I\otimes \T)\cdot X\cdot \Gamma(I\otimes \T^*),\qquad X\in\Gamma_q(\real_A^{ld}).
\end{equation}
In particular, if $h=l$, then $\Gamma_{A_l}(I\otimes\T)$ is a $*$-automorphism of $\Gamma_q(\real_A^{ld})$. If $1\leq h<l$, then, under the canonical identification of $\Gamma_q(\real_A^{hd})$ with its copy inside $\Gamma_q(\real_A^{ld})$, $\Gamma_{A_l,A_h}(I\otimes \T)$ is the state-preserving conditional expectation.

\item
Let $S:\real^l\to\real^l$ be a contraction, and denote its complexification by the same symbol. Then there exist $m\geq l$, an orthogonal transformation $\T:\real^m\to\real^m$, and the canonical embedding $j:\real^l\to\real^m$ such that
\[S=j^*\T j.\]
Denoting the complexifications of $\T$ and $j$ by the same symbols, on
$\phi_{ld}^{-1}\bigl(\F_q^{\rm finite}(\real_A^{ld})\bigr)$ we have
\begin{equation}\label{eq611}
\Gamma_{A_l}(I\otimes S)=\Gamma_{A_m,A_l}(I\otimes j^*)\circ\Gamma_{A_m}(I\otimes\T)
\circ h_{ld,md}^{-1}.
\end{equation}
\end{enumerate}
\end{lemma}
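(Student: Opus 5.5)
For (i), I will work at the Fock-space level and use the uniqueness in \cite[Proposition 1.1]{H2003}. Take $\T$ orthogonal with $h=l$. Then $I\otimes\T$ is unitary on $\real_A^{ld}$, because $\T$ commutes with $A_l=A\otimes{\rm Id}$ and therefore preserves the deformed inner product \eqref{hz:inner}. By Proposition \ref{prop200}(a), $\Gamma(I\otimes\T)$ is a unitary on $\F_q(\real_A^{ld})$ with inverse $\Gamma(I\otimes\T^*)$, and it fixes $\Omega$. On finite tensors one checks directly that
\[\Gamma(I\otimes\T)\,a_+(f)\,\Gamma(I\otimes\T)^*=a_+((I\otimes\T)f).\]
Taking adjoints gives the same identity for $a_-$, hence for the Segal fields: $\Gamma(I\otimes\T)\,s(f)\,\Gamma(I\otimes\T^*)=s((I\otimes\T)f)$. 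Consequently $X\mapsto\Gamma(I\otimes\T)X\Gamma(I\otimes\T^*)$ is a normal $*$-automorphism of $\Gamma_q(\real_A^{ld})$. It sends $X\Omega$ to $\Gamma(I\otimes\T)(X\Omega)$, and a map determined by this property is unique, so it equals $\Gamma_{A_l}(I\otimes\T)$. This proves \eqref{eq609} in the orthogonal case.

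For the projection case $h<l$, write $\T=j^*$ with $j$ the canonical embedding. Then $\Gamma(I\otimes\T)=j_{hd,ld}^*$, the adjoint of the isometry $\Gamma(I\otimes j)$. The map $\Gamma_{A_l,A_h}(I\otimes\T)$ is the unital completely positive map characterized by $Y\Omega\mapsto\Gamma(I\otimes j^*)(Y\Omega)$. Using \eqref{eq231}, composing it with $h_{hd,ld}^{-1}=\Gamma_{A_h,A_l}(I\otimes j)$ in either order gives the following.
\begin{enumerate}[(a)]
\item It restricts to the identity on the copy of $\Gamma_q(\real_A^{hd})$, since $j^*j=I$.
\item It is $\Gamma_q(\real_A^{hd})$-bimodular. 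I will check this on vectors: for $a,b$ in the smaller algebra, $\Gamma(I\otimes j^*)(aYb\,\Omega)=a\,\Gamma(I\otimes j^*)(Yb\,\Omega)$, using that $\Gamma(I\otimes j^*)$ intertwines left multiplication by Segal fields of $\real_A^{hd}$. For the right action I will use the commutant/right-field symmetry.
\end{enumerate}
Together with state preservation, (a) and (b) identify the map as the state-preserving conditional expectation. The sandwich formula \eqref{eq609} then holds when read through this identification. This bimodularity step is the one I expect to be most delicate, because right multiplication in a non-tracial setting requires the modular (Tomita) structure. I would first try to use that $j$ commutes with $U_t$, so that $\Gamma(I\otimes jj^*)$ commutes with the modular conjugation $J$ and the modular operator $\Delta$; this handles the right action.

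For (ii), I will use the standard Halmos-type unitary dilation. Set $m=2l$ and
\[\T=\begin{bmatrix} S & (I-SS^*)^{1/2}\\ (I-S^*S)^{1/2} & -S^*\end{bmatrix},\]
which is orthogonal on $\real^{2l}$ and satisfies $j^*\T j=S$ for the embedding into the first block. Then \eqref{eq231} applied twice gives
\[\Gamma_{A_m,A_l}(I\otimes j^*)\circ\Gamma_{A_m}(I\otimes\T)\circ\Gamma_{A_l,A_m}(I\otimes j)=\Gamma_{A_l}(I\otimes j^*\T j)=\Gamma_{A_l}(I\otimes S)\]
on $\phi_{ld}^{-1}(\F_q^{\rm finite})$. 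Since $h_{ld,md}^{-1}=\Gamma_{A_l,A_m}(I\otimes j)$, this is exactly \eqref{eq611}.
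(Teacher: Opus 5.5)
In the projection case $h<l$ of (i) you never actually establish \eqref{eq609}. You defer it to a bimodularity argument, but only the left-module half is carried out. The right-module half is left as a tentative plan (``I would first try \ldots''). The step from ``$\Gamma_{A_l,A_h}(I\otimes\T)$ is the conditional expectation'' to the operator identity on $\F_q(\real_A^{hd})$ is asserted rather than derived. This is not a formality. Unlike the orthogonal case, the compression $X\mapsto\Gamma(I\otimes\T)X\Gamma(I\otimes\T^*)$ is not multiplicative: for instance $a_-(f)a_+(g)$ produces the scalar term $\langle f,g\rangle$ instead of $\langle (I\otimes\T)f,(I\otimes\T)g\rangle$. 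So you cannot verify the formula on Segal fields. In your route, the right-module property is exactly what turns the vector-level characterization $X\Omega\mapsto\Gamma(I\otimes\T)(X\Omega)$ into an identity of operators.

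The missing ingredient is Tomiyama's theorem, which makes bimodularity automatic; this is how the paper concludes. Set $\Psi=\Gamma_{A_l,A_h}(I\otimes\T)$. By your step (a), $h_{hd,ld}^{-1}\circ\Psi$ is a unital completely positive, hence contractive, idempotent onto the copy of $\Gamma_q(\real_A^{hd})$. It is therefore a conditional expectation, and no modular-theoretic computation with $J$ and $\Delta$ is needed. Then \eqref{eq609} follows at once: for $b\in\Gamma_q(\real_A^{hd})$, $\Psi(X)\,b\Omega=\Psi\bigl(X\,h_{hd,ld}^{-1}(b)\bigr)\Omega=\Gamma(I\otimes\T)X\,h_{hd,ld}^{-1}(b)\Omega=\Gamma(I\otimes\T)X\Gamma(I\otimes\T^*)\,b\Omega$, and $\Gamma_q(\real_A^{hd})\Omega$ is dense.

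The paper instead proves \eqref{eq609} in both cases directly on Wick words. There the creators stand on the left and the annihilators on the right, and one uses $\Gamma(I\otimes\T)a_+(g)=a_+((I\otimes\T)g)\Gamma(I\otimes\T)$, $a_-(g)\Gamma(I\otimes\T^*)=\Gamma(I\otimes\T^*)a_-((I\otimes\T)g)$, and $\Gamma(I\otimes\T)\Gamma(I\otimes\T^*)=I$ in between.

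The rest of your proposal is fine. Your orthogonal case, via conjugation of Segal fields and uniqueness from the separating vacuum, is a valid and more elementary alternative to the paper's Wick computation. Your explicit Halmos block dilation in (ii) is more precise than the paper's appeal to Sz.-Nagy, since it visibly gives a finite $m=2l$ and a real orthogonal $\T$; you should record $S(I-S^*S)^{1/2}=(I-SS^*)^{1/2}S$ to justify orthogonality. Deriving \eqref{eq611} from \eqref{eq231} then matches the paper.
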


\begin{proof}
We first prove (i). Since
\[(I\otimes\T)(U_t\otimes{\rm Id}_{\com^l})=(U_t\otimes{\rm Id}_{\com^h})(I\otimes\T),\qquad t\in\real,\]
the map $\Gamma_{A_l,A_h}(T)$ is well defined.

In either of the two cases in (i), we have $(I\otimes \T)(I\otimes \T)^*=I\otimes \T \T^*=I$ on $\real_A^{hd}$. Let $g\in\real_A^{ld}$. By definition, it is straightforward to see
\begin{align}
\label{eq607} & \Gamma(I\otimes \T)a_+(g)\Gamma(I\otimes \T^*)=a_+((I\otimes \T)g),\\
\label{eq608} & \Gamma(I\otimes \T)a_-(g)\Gamma(I\otimes \T^*)=a_-((I\otimes \T)g).
\end{align}

Let $g=g_1\otimes\cdots\otimes g_k$ be a simple tensor. Recall that
\[\phi_{ld}^{-1}(g)=\sum_{\substack{I=\{i_1<\cdots<i_r\},\\I^c=\{j_1<\cdots<j_s\},\\I\cup I^c=[k]}}q^{i(I)}a_+^I(g)a_-^{I^c}(g)\]
by the Wick formula \eqref{lem-Wick}. Then, by \eqref{eq607} and \eqref{eq608}, we obtain
\begin{align*}
\Gamma(I\otimes \T)\phi_{ld}^{-1}(g)\Gamma(I\otimes \T^*)
&=\sum_{\substack{I=\{i_1<\cdots<i_r\},\\I^c=\{j_1<\cdots<j_s\},\\I\cup I^c=[k]}}q^{i(I)}a_+^I((I\otimes \T)^{\otimes k}g)a_-^{I^c}((I\otimes \T)^{\otimes k}g) \\
&=\phi_{hd}^{-1}\bigl((I\otimes \T)^{\otimes k}g\bigr)=[\Gamma_{A_l,A_h}(I\otimes \T)]\left(\phi_{ld}^{-1}(g)\right).
\end{align*}
Since $\phi_{ld}^{-1}\bigl(\F_q^{\rm finite}(\real_A^{ld})\bigr)$ is $w^*$-dense in $\Gamma_q(\real_A^{ld})$, this proves \eqref{eq609}.

Suppose first that $h=l$ and $\T$ is orthogonal. In this case, by Proposition \ref{prop200} (a),
\begin{align*}
\Gamma(I\otimes \T)^*&=\Gamma(I\otimes \T^*)\\
\Gamma(I\otimes \T)^*\Gamma(I\otimes \T)&=\Gamma(I\otimes \T^*\T)=I,\\
\Gamma(I\otimes \T)\Gamma(I\otimes \T)^*&=\Gamma(I\otimes \T\T^*)=I.
\end{align*}
Thus $\Gamma(I\otimes \T)$ is unitary. It follows from \eqref{eq609} that $\Gamma_{A_l}(I\otimes \T)$ is a $*$-automorphism of $\Gamma_q(\real_A^{ld})$, with inverse $\Gamma_{A_l}(I\otimes \T^*)$.

Now suppose that $1\leq h<l$ and $\T$ is the canonical projection from $\real^l$ onto $\real^h$. Then $I\otimes\T^*:\real_A^{hd}\hookrightarrow \real_A^{ld}$ is the canonical embedding and $\T\T^*=I_{\real^h}$. Hence, by \eqref{eq231},
\[\Gamma_{A_l,A_h}(I\otimes\T)\circ\Gamma_{A_h,A_l}(I\otimes\T^*)=\Gamma_{A_h}(I\otimes\T\T^*)={\rm Id}_{\Gamma_q(\real_A^{hd})}.\]
Therefore, identifying $\Gamma_q(\real_A^{hd})$ with its canonical copy in $\Gamma_q(\real_A^{ld})$ through $\Gamma_{A_h,A_l}(I\otimes\T^*)$, the map
\[\mathsf E=\Gamma_{A_h,A_l}(I\otimes\T^*)\circ\Gamma_{A_l,A_h}(I\otimes\T)\]
is a normal unital completely positive projection onto that subalgebra. Since both $\Gamma_{A_h,A_l}(I\otimes\T^*)$ and $\Gamma_{A_l,A_h}(I\otimes\T)$ preserve the vacuum states, $\mathsf E$ also preserves the vacuum state. By Tomiyama's theorem, $\mathsf E$ is therefore the state-preserving conditional expectation onto the canonical copy of $\Gamma_q(\real_A^{hd})$.

For (ii), the factorization $S=j^*\T j$ follows from the Sz.-Nagy dilation theorem \cite{SN1959}. By Proposition \ref{prop200} (a), we have
\[\Gamma(I\otimes S)=\Gamma(I\otimes j^*)\Gamma(I\otimes\T)\Gamma(I\otimes j).\]
For a simple tensor $\displaystyle f=(f_1\otimes g_1)\otimes\cdots\otimes(f_k\otimes g_k)\in \F_q^{\rm finite}(\real_A^{ld})$, by \eqref{eq322}, 
\begin{align*}
(\phi_{md}\circ h_{ld,md}^{-1}\circ \phi_{ld}^{-1})(f)=(f_1\otimes j(g_1))\otimes\cdots\otimes (f_k\otimes j(g_k))=\Gamma(I\otimes j)(f).
\end{align*}
This means that, on $\phi_{ld}^{-1}\bigl(\F_q^{\rm finite}(\real_A^{ld})\bigr)$, we have
\begin{equation}\label{eq610}
\phi_{md}^{-1}\circ \Gamma(I\otimes j)\circ \phi_{ld}=h_{ld,md}^{-1}.
\end{equation}
Therefore, for any $X\in\phi_{ld}^{-1}\bigl(\F_q^{\rm finite}(\real_A^{ld})\bigr)$, by \eqref{eq230} and Proposition \ref{prop200}(a), we have
\begin{align*}
\Gamma_{A_l}(I\otimes S)(X)&=\phi_{ld}^{-1}\Gamma(I\otimes S)\phi_{ld}(X)\\
&=\phi_{ld}^{-1}\circ\Gamma(I\otimes j^*)\circ\Gamma(I\otimes\T)\circ\Gamma(I\otimes j)\circ\phi_{ld}(X)\\
&=\Gamma_{A_m,A_l}(I\otimes j^*)\circ\phi_{md}^{-1}\circ\Gamma(I\otimes\T)\circ\Gamma(I\otimes j)\circ\phi_{ld}(X)\\
&=\Gamma_{A_m,A_l}(I\otimes j^*)\circ\Gamma_{A_m}(I\otimes\T)\circ\left(\phi_{md}^{-1}\circ\Gamma(I\otimes j)\circ\phi_{ld}(X)\right)\\
&=\Gamma_{A_m,A_l}(I\otimes j^*)\circ\Gamma_{A_m}(I\otimes\T)\circ h_{ld,md}^{-1}(X).
\end{align*}
This proves \eqref{eq611}.
\end{proof}

 Let us define $\A_d=\phi_d^{-1}\bigl(\F_q^{\rm finite}(\real_A^d)\bigr)$ and $\A_d^0=\{X\in\A_d:\f(X)=0\}$. By Lemma \ref{lem:qaw-wick}, $\A_d$ is $w^*$-dense in $\Gamma_q(\real_A^d)$. We now lift the operator $N:\F_q^{\rm finite}(\real_A^d)\longrightarrow \F_q^{\rm finite}(\real_A^d)$ to $\wt{N}:\A_d\rightarrow \A_d$ through the GNS identification, and similarly lift $\nabla$, $\R$, and $\Gamma(I\otimes R_\theta)$ to the corresponding $\wt{\nabla}, \wt{\mathcal R}_d, \alpha_{\theta}$ as follows.

\begin{definition}
\begin{enumerate}
\item We define $\wt{N}=\phi_d^{-1}N\phi_d:\A_d\rightarrow \A_d$ and $\wt{\nabla}=\phi_{2d}^{-1}\nabla\phi_d:\A_d\rightarrow \A_{2d}$. The Riesz transform on $\Gamma_q(\real_A^d)$ is then defined by
\begin{equation}\label{eq:qaw-Riesz}
\wt{\R}_d=\wt{\nabla}\circ \wt{N}^{-1/2}=\phi_{2d}^{-1}\circ \R\circ \phi_d:\A_d^0 \longrightarrow \A_{2d}.
\end{equation}    
\item For any $\theta\in\real$, we define
\[\alpha_\theta=\Gamma_{A_2}(I\otimes R_\theta)=\phi_{2d}^{-1}\circ \Gamma(I\otimes R_{\theta})\circ \phi_{2d}:\Gamma_q(\real_A^{2d}) \longrightarrow \Gamma_q(\real_A^{2d}).\]
\end{enumerate}

\end{definition}

\begin{remark}\label{rmk600}
\begin{enumerate}
\item Since $R_\theta$ is orthogonal, Lemma \ref{lem600}(i) gives
\begin{equation}\label{eq605}
\alpha_\theta(X)=\Gamma(I\otimes R_\theta)X\Gamma(I\otimes R_{-\theta}),\qquad X\in\Gamma_q(\real_A^{2d}).
\end{equation}
Thus $(\alpha_\theta)_{\theta\in\real}$ is a one-parameter group of state-preserving $*$-automorphisms. Moreover, since $I\otimes R_\theta$ commutes with $U_t\otimes{\rm Id}_{\com^2}$, we have $\alpha_\theta\circ\sigma_t^\f=\sigma_t^\f\circ\alpha_\theta$ for all $t,\theta\in\real$. Thus, $\alpha_{\theta}$ extends to an isometric isomorphism $\alpha_{\theta}^{(p)}$ on $L_p(\Gamma_q(\mathbb R^{2d}_A))$ for all $1\leq p<\infty$.
\item By Proposition \ref{prop200} (c), $D=\d\Gamma(I\otimes P)$ satisfies $\Gamma(I\otimes R_\theta)=e^{i\theta D}$, so \eqref{eq605} is written as
\[\alpha_\theta(X)=e^{i\theta D}Xe^{-i\theta D}.\]
Furthermore, $\wt D=\phi_{2d}^{-1}\circ D\circ\phi_{2d}$ satisfies $\alpha_\theta=e^{i\theta\wt D}$.
\item By \eqref{eq600} and \eqref{eq612}, we obtain
\begin{align*}
\wt\nabla&=\phi_{2d}^{-1}\circ\nabla\circ\phi_d= \phi_{2d}^{-1}\circ iD\circ j_{d,2d}\circ\phi_d \\
&=\phi_{2d}^{-1}\circ iD\circ\phi_{2d}\circ h_{d,2d}^{-1} = i\wt D\circ h_{d,2d}^{-1}.
\end{align*}
Thus the gradient is the infinitesimal action of the rotation after the canonical embedding into the doubled $q$-Araki--Woods algebra.
\end{enumerate}    
\end{remark}

We also lift the operators $\Pi$ defined on $\mathcal{F}^{\rm finite}_q(\mathbb{R}^{2d}_A)$ to $\wt{\Pi}_d$ defined on $\A_{2d}$ by setting
\begin{equation}
\wt{\Pi}_d=\phi_{2d}^{-1}\circ\Pi\circ\phi_{2d}.
\end{equation}
In particular, the Fock-space kernel representation \eqref{KR:eq-3} is lifted to the following kernel representation of $\wt{\R}_d$.

\begin{lemma}\label{lem-ker-R}
For any $X\in\mathcal A_d^0$, we have
\begin{equation}\label{eq:ker-R}
\wt{\R}_d(X)=\frac{1}{\sqrt{2\pi}}\wt{\Pi}_d\circ{\rm p.v.}\int_{-\pi/2}^{\pi/2}\frac{{\rm sgn}(\theta)}{\sqrt{-\log\cos^2\theta}}(\alpha_\theta\circ h_{d,2d}^{-1})(X)\,d\theta.
\end{equation}

\end{lemma}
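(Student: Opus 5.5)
The plan is to conjugate the Fock-space identity \eqref{KR:eq-3} of Lemma \ref{lem-ker-F}(iv) by the GNS maps. By \eqref{eq:qaw-Riesz}, $\wt{\R}_d(X)=\phi_{2d}^{-1}\R\phi_d(X)$ for $X\in\A_d^0$. Here $\xi=\phi_d(X)$ lies in $\F_q^{\rm finite}(\real_A^d)\ominus\com\Omega$, because $\f(X)=\lara{\Omega,\xi}=0$. So Lemma \ref{lem-ker-F}(iv) gives
\[\R\xi=\frac{1}{\sqrt{2\pi}}\,\Pi\,{\rm p.v.}\int_{-\pi/2}^{\pi/2}\kappa(\theta)\,\Gamma(I\otimes R_\theta)j_{d,2d}(\xi)\,d\theta .\]
For the integrand, I use \eqref{eq600} to write $j_{d,2d}\phi_d=\phi_{2d}h_{d,2d}^{-1}$. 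By the definition of $\alpha_\theta$, we have $\Gamma(I\otimes R_\theta)\phi_{2d}=\phi_{2d}\alpha_\theta$. Together these give $\Gamma(I\otimes R_\theta)j_{d,2d}\phi_d(X)=\phi_{2d}\bigl(\alpha_\theta h_{d,2d}^{-1}(X)\bigr)$.

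Everything is finite-dimensional once the degree is fixed, so the principal value integral should be handled degreewise. Write $\xi=\sum_{k=1}^K\xi_k$ with $\xi_k$ homogeneous of degree $k$. By \eqref{eq601}, the vector $\Gamma(I\otimes R_\theta)j_{d,2d}(\xi_k)$ is a trigonometric polynomial in $\theta$ with values in the finite-dimensional space $(\real_A^{2d})^{\otimes_q k}$. Hence all integrals are integrals of continuous vector-valued functions on $\{\e<|\theta|<\pi/2\}$, followed by a limit in a finite-dimensional space. The linear bijection $\phi_{2d}^{-1}$ from $\F_q^{\rm finite}(\real_A^{2d})$ (degree $\le K$) onto $\A_{2d}$ therefore commutes with the truncated integrals and with the limit $\e\to0$. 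Indeed, on the finite-dimensional subspace of degree $\le K$, any norm on the image makes $\phi_{2d}^{-1}$ continuous. Similarly, $\wt\Pi_d=\phi_{2d}^{-1}\Pi\phi_{2d}$ is linear and preserves the degree-$\le K$ subspace, so it passes inside as well.

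Applying $\phi_{2d}^{-1}$ to the displayed identity and inserting $\phi_{2d}\phi_{2d}^{-1}$ after $\Pi$ gives
\[\wt\R_d(X)=\frac{1}{\sqrt{2\pi}}\,\wt\Pi_d\,{\rm p.v.}\int\kappa(\theta)\,\alpha_\theta h_{d,2d}^{-1}(X)\,d\theta .\]
This is the claim \eqref{eq:ker-R}.

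The main obstacle I anticipate is making sense of the principal value integral inside the algebra $\Gamma_q(\real_A^{2d})$ and justifying the interchange with $\phi_{2d}^{-1}$. The degreewise finite-dimensionality described above resolves this. It also shows that the integral converges in any reasonable topology, for instance in $L_p$ after the symmetric density embedding. Convergence of the p.v. integral itself reduces to the scalar facts already used in Lemma \ref{lem-ker-F}(iv): the terms with $|S|\neq1$ vanish under $\Pi$, and the remaining scalar integrand $(\cos\theta)^{k-1}\sin\theta\,\kappa(\theta)$ is integrable.
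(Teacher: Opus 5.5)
Your proposal is correct and follows the paper's proof. Both arguments apply \eqref{KR:eq-3} to $\phi_d(X)$, conjugate by $\phi_{2d}^{-1}$ so that $\phi_{2d}^{-1}\Pi=\wt\Pi_d\,\phi_{2d}^{-1}$, and rewrite the integrand using \eqref{eq600} together with $\phi_{2d}\circ\alpha_\theta=\Gamma(I\otimes R_\theta)\circ\phi_{2d}$. Your degreewise finite-dimensionality argument makes explicit the interchange of $\phi_{2d}^{-1}$ with the principal value integral, which the paper leaves implicit. One small correction to your convergence remark: the p.v. integral is taken before $\Pi$ is applied, so the $S=\emptyset$ term converges because $\kappa$ is odd, not because $\Pi$ kills it.
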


\begin{proof}
By \eqref{KR:eq-3}, we have
\begin{align*}
&\wt{\R}_d(X)= (\phi_{2d}^{-1}\circ \R\circ \phi_d)(X)\\
&=\frac{1}{\sqrt{2\pi}} \phi_{2d}^{-1} \circ  \Pi\circ \,{\rm p.v.} \left ( \int_{-\pi/2}^{\pi/2}\frac{{\rm sgn}(\theta)}{\sqrt{-\log\cos^2\theta}}\left (\Gamma(I\otimes R_\theta)\circ j_{d,2d}\circ \phi_d\right )(X)\,d\theta\right )\\
&=\frac{1}{\sqrt{2\pi}} \wt{\Pi}_d\circ \,{\rm p.v.} \left ( \int_{-\pi/2}^{\pi/2}\frac{{\rm sgn}(\theta)}{\sqrt{-\log\cos^2\theta}}\left (\phi_{2d}^{-1}\circ \Gamma(I\otimes R_\theta)\circ j_{d,2d}\circ \phi_d\right )(X)\,d\theta\right ).
\end{align*}
Since $\phi_{2d}\circ\alpha_\theta=\Gamma(I\otimes R_\theta)\circ\phi_{2d}$ and $\phi_{2d}^{-1}\circ j_{d,2d}\circ \phi_d=h_{d,2d}^{-1}$ by \eqref{eq600}, we obtain
\begin{align*}
\left (\Gamma(I\otimes R_\theta)\circ j_{d,2d}\circ \phi_d\right )(X)&=\left (\Gamma(I\otimes R_\theta)\circ\phi_{2d}\circ \phi_{2d}^{-1}\circ j_{d,2d}\circ \phi_d\right )(X)\\
&=(\phi_{2d}\circ \alpha_\theta)
\bigl(h_{d,2d}^{-1}(X)\bigr).    
\end{align*}
The desired identity follows.
\end{proof}

\subsection{$L_p$-estimates for the Riesz transform}

Let us write the kernel function of the Riesz transform $\wt{\mathcal{R}}_d$ as $\kappa(\theta)=\displaystyle \frac{{\rm sgn}(\theta)}{\sqrt{-\log\cos^2\theta}}$ for $-\displaystyle \frac{\pi}{2}<\theta<\frac{\pi}{2}$. For $X\in\A_{2d}$, define
\begin{equation}\label{eq613}
T_d(X)=\frac{1}{\sqrt{2\pi}}\,{\rm p.v.}\int_{-\pi/2}^{\pi/2}\alpha_\theta(X)\kappa(\theta)\,d\theta.
\end{equation}
Then Lemma \ref{lem-ker-R} gives the factorization
\begin{equation}\label{eq625}
\wt{\R}_d=\wt{\Pi}_d\circ T_d\circ h_{d,2d}^{-1}:\A_d^0\longrightarrow \A_{2d}.
\end{equation}

We first establish the $L_p$-boundedness of the singular integral $T_d$.

\begin{lemma}\label{lem:qaw-T-bound}
Let $1<p<\infty$. Then $T_d$ extends to a bounded operator
$T_d^{(p)}$ on $L_p(\Gamma_q(\real_A^{2d}))$, and there exists a
constant $C_p>0$, depending only on $p$, such that
\[\|T_d^{(p)}\|_{L_p(\Gamma_q(\real_A^{2d}))\rightarrow L_p(\Gamma_q(\real_A^{2d}))}
\leq C_p.\]
In particular, $C_p$ is independent of $d$, $q$, and $A$.
\end{lemma}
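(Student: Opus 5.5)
The plan is to copy the proof of Lemma \ref{lem:Tn-bound}, now using the group $(\alpha_\theta)_{\theta\in\real}$ of Remark \ref{rmk600}. The first thing to record is the structural input. By Remark \ref{rmk600}(1), each $\alpha_\theta$ is a state-preserving $*$-automorphism of $\Gamma_q(\real_A^{2d})$ that commutes with $\sigma_t^\f$. Hence its canonical extension $\alpha_\theta^{(p)}$ is an isometric isomorphism of $L_p(\Gamma_q(\real_A^{2d}))$, and these extensions form a one-parameter group.

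The next step is to check strong continuity of $\theta\mapsto\alpha_\theta^{(p)}$, which transference needs. Take $X\in\A_{2d}$ and write $u=\phi_{2d}(X)$, a finite sum of simple tensors. By \eqref{eq601}, $\Gamma(I\otimes R_\theta)u$ is a trigonometric polynomial in $\theta$ with values in a fixed finite-dimensional subspace of $\F_q^{\rm finite}(\real_A^{2d})$. By the Wick formula, Lemma \ref{lem:qaw-wick}, $\alpha_\theta(X)=\phi_{2d}^{-1}(\Gamma(I\otimes R_\theta)u)$ is therefore a trigonometric polynomial with coefficients in $\A_{2d}$. So $\theta\mapsto D_\f^{1/2p}\alpha_\theta(X)D_\f^{1/2p}$ is norm continuous. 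Since this dense class is uniformly bounded by isometry, strong continuity holds on all of $L_p$.

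This finite-dimensional description also makes the principal value in \eqref{eq613} well defined on $\A_{2d}$. Indeed, with $\kappa$ odd, the p.v. integral equals the absolutely convergent integral of $(\alpha_\theta(X)-\alpha_{-\theta}(X))\kappa(\theta)/2$, whose integrand is $O(1)$ near $0$.

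Now decompose exactly as before. Extend $\kappa$ by $0$ to $\mathbb T=(-\pi,\pi]$ and write $\kappa(\theta)=\frac{1}{2\tan(\theta/2)}+k(\theta)$. The function $k$ is bounded near $0$, since both terms are $1/\theta+O(\theta)$. Near $\pm\pi/2$ it is integrable, because $\kappa(\theta)\sim(-2\log\cos\theta)^{-1/2}$ is bounded there, and $1/\tan(\theta/2)$ is bounded away from $0$. Hence $k\in L_1(\mathbb T)$. This gives $T_d=H_d+K_d$, with
\[H_d(X)=\frac{1}{\sqrt{2\pi}}\,{\rm p.v.}\int_{-\pi}^{\pi}\frac{\alpha_\theta(X)}{2\tan(\theta/2)}\,d\theta \quad\text{and}\quad K_d(X)=\frac{1}{\sqrt{2\pi}}\int_{-\pi}^{\pi}\alpha_\theta(X)k(\theta)\,d\theta.\]

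For $K_d$, Minkowski's inequality and the isometry of $\alpha_\theta^{(p)}$ give the bound $\|k\|_{L_1(\mathbb T)}/\sqrt{2\pi}$. For $H_d$, the group $\alpha_\theta$ is $2\pi$-periodic, since $R_{2\pi}=I$. The Coifman--Weiss transference principle for a bounded, strongly continuous, periodic group of isometries then bounds $\|H_d^{(p)}\|$ by a constant times the norm of the conjugate-function (Hilbert) transform on $L_p(\mathbb T;L_p(\Gamma_q(\real_A^{2d})))$. That norm is bounded by a constant depending only on $p$, because Haagerup noncommutative $L_p$ spaces are UMD with constant depending only on $p$. Adding the two bounds gives $C_p$ independent of $d$, $q$ and $A$.

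The main obstacle I expect is the technical passage from $\A_{2d}$ to the $L_p$ extension. One must make sure that $T_d^{(p)}$, defined on the dense subspace $D_\f^{1/2p}\A_{2d}D_\f^{1/2p}$ by the same p.v. integral of $\alpha_\theta^{(p)}$, agrees with the transferred operator. The needed facts are that $\alpha_\theta^{(p)}(D_\f^{1/2p}XD_\f^{1/2p})=D_\f^{1/2p}\alpha_\theta(X)D_\f^{1/2p}$ and that the trigonometric-polynomial structure lets the integral pass through. The bounded extension then follows by density.
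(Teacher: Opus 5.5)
Your proposal is correct and follows the paper's route: the paper's proof cites Remark \ref{rmk600}(1) and then repeats the argument of Lemma \ref{lem:Tn-bound}. That argument splits $\kappa(\theta)=\frac{1}{2\tan(\theta/2)}+k(\theta)$ with $k\in L_1(\mathbb T)$, handles the Hilbert part by Coifman--Weiss transference together with the UMD property of $L_p(\Gamma_q(\real_A^{2d}))$, and handles the $k$ part by Minkowski's inequality. What you add is detail the paper leaves implicit rather than a different argument: strong continuity and $2\pi$-periodicity of $\theta\mapsto\alpha_\theta^{(p)}$ via the trigonometric-polynomial structure on $\A_{2d}$, and the well-definedness of the principal value.
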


\begin{proof}
By Remark \ref{rmk600} (1), $\alpha_\theta^{(p)}$ is an isometric isomorphism on $L_p(\Gamma_q(\mathbb R^{2d}_A))$ for all  $\theta\in\real$. Thus, exactly as in the proof of Lemma \ref{lem:Tn-bound}, the Coifman--Weiss transference theorem and the boundedness of the vector-valued Hilbert transform on $L_p\bigl(\mathbb T;L_p(\Gamma_q(\real_A^{2d}))\bigr)$ imply
the desired conclusion.
\end{proof}

We next establish the $L_p$-boundedness of $\wt{\Pi}_d$. Recall that
$E=\begin{bmatrix}0&0\\0&1\end{bmatrix}$ and
$N_2=\d\Gamma(I\otimes E)$. For $t\geq0$, let us consider
\begin{equation}\label{eq614}
S_t=\Gamma_{A_2}(I\otimes e^{-tE})
=\phi_{2d}^{-1}\circ \Gamma(I\otimes e^{-tE})\circ \phi_{2d}:
\Gamma_q(\real_A^{2d})
\longrightarrow
\Gamma_q(\real_A^{2d}).
\end{equation}
Since $e^{-tE}$ is a real contraction and
\[(I\otimes e^{-tE})(U_s\otimes{\rm Id}_{\com^2})=(U_s\otimes{\rm Id}_{\com^2})(I\otimes e^{-tE}) \]
for all $s\in\real$, the map $S_t$ is normal unital completely positive and state-preserving. In addition, we have $S_0={\rm id}$ and 
\[S_t\circ S_{t'}=\Gamma_{A_2}(I\otimes e^{-tE}e^{-t'E})=S_{t+t'}\]
by \eqref{eq231}.  By Proposition \ref{prop200}(c), we have
\[\Gamma(I\otimes e^{-tE})=e^{-t\d\Gamma(I\otimes E)}=e^{-tN_2}.\]
This implies
\begin{equation}\label{eq615}
S_t=\phi_{2d}^{-1}\circ e^{-tN_2}\circ\phi_{2d}.
\end{equation}

Since $e^{-tN_2}$ is self-adjoint on $\F_q(\real_A^{2d})$, for any
$X,Y\in\Gamma_q(\real_A^{2d})$ we have
\begin{align*}
\f(X^*S_t(Y))=\left\langle \phi_{2d}(X),e^{-tN_2}\phi_{2d}(Y) \right\rangle_q=
\left\langle e^{-tN_2}\phi_{2d}(X),\phi_{2d}(Y)\right\rangle_q= \f(S_t(X)^*Y).
\end{align*}
Thus $(S_t)_{t\geq0}$ is symmetric with respect to the vacuum state.
Moreover, since $S_t\circ\sigma_s^\f= \sigma_s^\f\circ S_t$ for all $s\in\real$ and $\ t\geq 0$, the map $S_t$ admits a canonical contractive extension
\[S_t^{(p)}: L_p(\Gamma_q(\real_A^{2d})) \longrightarrow L_p(\Gamma_q(\real_A^{2d}))\]
for all $1\leq p<\infty$.

On $L_2(\Gamma_q(\real_A^{2d}))$, \eqref{eq615} and the strong continuity of $(e^{-tN_2})_{t\geq0}$ imply
\[\lim_{t\to0}\|S_t^{(2)}(X)-X\|_{L_2(\Gamma_q(\real_A^{2d}))}=0.\]
Since $S_t^{(p)}$ is contractive, interpolation and density imply that
\[\lim_{t\to0} \|S_t^{(p)}(X)-X\|_{L_p(\Gamma_q(\real_A^{2d}))}=0, \qquad X\in L_p(\Gamma_q(\real_A^{2d})),
\]
for all $1<p<\infty$. Hence $(S_t^{(p)})_{t\geq0}$ is a strongly continuous symmetric contraction semigroup on $L_p(\Gamma_q(\real_A^{2d}))$.

\begin{lemma}\label{lem:qaw-Pi-bound}
Let $1<p<\infty$. Then $\wt{\Pi}_d$ extends to a bounded operator on
$L_p(\Gamma_q(\real_A^{2d}))$, and there exists a constant $C_p>0$,
depending only on $p$, such that
\[\|\wt{\Pi}_d^{(p)}\|_{L_p(\Gamma_q(\real_A^{2d}))\to L_p(\Gamma_q(\real_A^{2d}))}\leq C_p.\]
\end{lemma}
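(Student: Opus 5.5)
We will mirror the proof of Lemma \ref{lem:Pi-bound}, now using the semigroup $(S_t)_{t\geq0}$ defined in \eqref{eq614}. The preceding discussion shows that $(S_t^{(p)})_{t\ge0}$ is a strongly continuous semigroup of contractions on $L_p(\Gamma_q(\real_A^{2d}))$. Each $S_t$ is normal, unital, completely positive and state-preserving, commutes with $\sigma^\f$, and is symmetric with respect to $\f$.

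The first step is to invoke Stein's complex interpolation theorem in its noncommutative form \cite{Ste1970,JLMX2006}, exactly as in Lemma \ref{lem:Pi-bound}. For each $1<p<\infty$ it gives an angle $\theta_p>0$ and a constant $B_p$, both depending only on $p$, such that $S_t^{(p)}$ extends analytically to the sector $\Sigma_{\theta_p}$ with
\[\sup_{z\in\Sigma_{\theta_p}}\|S_z^{(p)}\|\le B_p.\]
On the $L_2$ level this extension is $S_z=\phi_{2d}^{-1}e^{-zN_2}\phi_{2d}$, by \eqref{eq615} and the spectral theorem, since $N_2\ge0$ is self-adjoint with spectrum contained in $\mathbb N_0$.

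The second step is to fix $a_p>0$ with $\arctan(\pi/a_p)<\theta_p$, so that $a_p+is\in\Sigma_{\theta_p}$ for all $|s|\le\pi$. Lemma \ref{lem-ker-F}(ii) with $a=a_p$, conjugated by $\phi_{2d}$, gives on $\A_{2d}$
\[\wt\Pi_d=\frac{e^{a_p}}{2\pi}\int_{-\pi}^{\pi}e^{is}S_{a_p+is}\,ds .\]
We then pass to the canonical dense subspace $D_\f^{1/2p}\A_{2d}D_\f^{1/2p}$. There we define $\wt\Pi_d^{(p)}$ as the corresponding Bochner integral of the $S^{(p)}_{a_p+is}$, which yields
\[\|\wt\Pi_d^{(p)}\|\le e^{a_p}B_p=:C_p.\]
This bound allows $\wt\Pi_d^{(p)}$ to be extended by density, using Lemma \ref{lem:qaw-wick}.

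The main obstacle is the compatibility of the analytic extension $S_z^{(p)}$ with the $L_2$ formula $e^{-zN_2}$ on the dense subspace. Here we must check that for $X\in\A_{2d}$ the vector $D_\f^{1/2p}XD_\f^{1/2p}$ is mapped by $S_z^{(p)}$ to $D_\f^{1/2p}S_z(X)D_\f^{1/2p}$. This follows because $N_2$ preserves the finite-degree subspaces, so $z\mapsto S_z(X)$ is an entire function valued in a finite-dimensional subspace of $\A_{2d}$. Both sides are therefore analytic in $z$ and agree for real $z=t\ge0$, and the identity theorem gives equality on the whole sector. The resulting constant depends only on $p$, and not on $d$, $q$ or $A$.
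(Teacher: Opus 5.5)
Your proposal is correct and follows the paper's proof essentially step by step. The shared argument is: Stein's analytic extension of $(S_t^{(p)})$ to a sector $\Sigma_{\theta_p}$ with a uniform bound $B_p$; the representation $\wt{\Pi}_d=\frac{e^{a_p}}{2\pi}\int_{-\pi}^{\pi}e^{is}S_{a_p+is}\,ds$ on $\A_{2d}$, obtained from \eqref{KR:eq-2}; and the resulting bound $e^{a_p}B_p$. Your identity-theorem argument shows that $S_z^{(p)}$ agrees with the $L_p$-extension of $\phi_{2d}^{-1}e^{-zN_2}\phi_{2d}$ on the dense subspace $D_\f^{1/2p}\A_{2d}D_\f^{1/2p}$, and this usefully fills in a compatibility step that the paper only asserts in \eqref{eq617}.
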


\begin{proof}
{\color{purple}By Stein's theorem for symmetric contraction semigroups \cite{Ste1970,JLMX2006}}, there exist $\theta_p>0$ and $B_p>0$, depending only on $p$, such that $(S_t^{(p)})_{t\geq0}$ admits an analytic extension $S_z^{(p)}$ to $\Sigma_{\theta_p}=\{z\in\com:|\arg z|<\theta_p\}$ satisfying $\sup_{z\in\Sigma_{\theta_p}}\|S_z^{(p)}\|\leq B_p$.

On $\A_{2d}$, the semigroup $S_t$ in \eqref{eq615} extends analytically to
\begin{equation}\label{eq617}
S_z=\phi_{2d}^{-1}\circ e^{-zN_2}\circ\phi_{2d}, \qquad z\in\Sigma_{\theta_p}.
\end{equation}

Choose $a_p>0$ such that $\arctan\frac{\pi}{a_p}<\theta_p$. Then $a_p+is\in\Sigma_{\theta_p}$ for all $-\pi\leq s\leq\pi$ and, by \eqref{KR:eq-2} and \eqref{eq617}, we obtain
\begin{equation}\label{eq618}
\wt{\Pi}_d=\frac{e^{a_p}}{2\pi}\int_{-\pi}^{\pi}e^{is}S_{a_p+is}\,ds
\end{equation}
on $\A_{2d}$. Consequently, for their $L_p$-extensions, we obtain
\begin{align*}
\|(\wt{\Pi}_d)^{(p)}\|_{L_p(\Gamma_q(\real_A^{2d}))\to L_p(\Gamma_q(\real_A^{2d}))} \leq \frac{e^{a_p}}{2\pi} \int_{-\pi}^{\pi} \|S_{a_p+is}^{(p)}\|_{L_p(\Gamma_q(\real_A^{2d}))\to L_p(\Gamma_q(\real_A^{2d}))} \,ds \leq e^{a_p}B_p.
\end{align*}
Since $a_p$ and $B_p$ depend only on $p$, setting $C_p=e^{a_p}B_p$ completes the proof.
\end{proof}

We now pass to the $L_p$-level. Let $D_d$ and $D_{2d}$ denote the density operators associated with the vacuum states on
$\Gamma_q(\real_A^d)$ and $\Gamma_q(\real_A^{2d})$, respectively. Let $(h_{d,2d}^{-1})^{(p)}$ denote the canonical $L_p$-extension of $h_{d,2d}^{-1}$. Since
\[h_{d,2d}^{-1}=\Gamma_{A_1,A_2}(I\otimes j)=\phi_{2d}^{-1}\circ \Gamma(I\otimes j)\circ \phi_d\]
is a state-preserving $*$-monomorphism intertwining the corresponding modular automorphism groups, $(h_{d,2d}^{-1})^{(p)}$ is an isometric embedding. For $1<p<\infty$, let
\begin{equation}
\wt{\R}_d^{(p)}=(\wt{\Pi}_d)^{(p)}\circ T_d^{(p)} \circ (h_{d,2d}^{-1})^{(p)}.
\end{equation}
By \eqref{eq625}, for any $X\in\A_d^0$,
\[\wt{\R}_d^{(p)}\left(D_d^{1/2p}XD_d^{1/2p}\right)=D_{2d}^{1/2p}\wt{\R}_d(X)D_{2d}^{1/2p}.\]
Since $(h_{d,2d}^{-1})^{(p)}$ is an isometry, Lemma \ref{lem:qaw-T-bound} and Lemma \ref{lem:qaw-Pi-bound} imply
\begin{equation}\label{eq624}
\left\|\wt{\R}_d^{(p)}\left(D_d^{1/2p}XD_d^{1/2p}\right) \right\|_p\leq C_p\left\| D_d^{1/2p}XD_d^{1/2p} \right\|_p, \qquad X\in\A_d^0    
\end{equation}
and $C_p$ depends only on $p$. Thus $\wt{\R}_d^{(p)}$ extends uniquely to 
\[L_p^0(\Gamma_q(\mathbb R^d_A))=\overline{\left\{D_d^{1/2p}XD_d^{1/2p}:X\in\A_d^0\right\}}\subseteq L_p(\Gamma_q(\real_A^d)).\]

We are now ready to state the main result of this section.

\begin{thm}\label{thm-main2}
Let $1<p<\infty$. There exists a constant $C_p>0$, depending only on $p$, such that
\[C_p^{-1}\|X\|_{L_p(\Gamma_q(\real_A^d))} \leq \|\wt{\R}_d^{(p)}(X)\|_{L_p(\Gamma_q(\real_A^{2d}))} \leq C_p \|X\|_{L_p(\Gamma_q(\real_A^d))}\]
for every $X\in L_p^0(\Gamma_q(\real_A^d))$. In particular, $C_p$ is independent of $d$, $q$, and $A$.
\end{thm}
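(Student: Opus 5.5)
The upper estimate is already \eqref{eq624}, so the task is the lower bound. I will mirror the proof of Theorem \ref{Riesz-Baby-Fock}: write down an $L_2$-adjoint of $\wt{\R}_d$, show that it is $L_p$-bounded with a constant depending only on $p$, and show that it is a left inverse of $\wt{\R}_d$ on mean-zero elements.

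\textbf{Step 1 (the $L_2$-identity).} Let $\mathsf E=\Gamma_{A_2,A_1}(I\otimes j^*)$. Since $j^*$ is the canonical projection $\real^2\to\real^1$, Lemma \ref{lem600}(i) identifies $\mathsf E$ with the state-preserving conditional expectation onto the copy $h_{d,2d}^{-1}(\Gamma_q(\real_A^d))$. By \eqref{eq230}, $\mathsf E$ is implemented on the GNS level by $\Gamma(I\otimes j^*)=j_{d,2d}^*$. Hence, at the algebraic $L_2$-level, $(h_{d,2d}^{-1})^*=\mathsf E$. Next, $\Pi$ is an orthogonal projection, so $\wt\Pi_d^*=\wt\Pi_d$. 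Also $\alpha_\theta^*=\alpha_{-\theta}$, because $\Gamma(I\otimes R_\theta)$ is unitary with inverse $\Gamma(I\otimes R_{-\theta})$. Since $\kappa$ is odd, $T_d^*=-T_d$. From \eqref{eq625} we then get
\[
\wt\R_d^*=-\mathsf E\circ T_d\circ\wt\Pi_d .
\]
Lemma \ref{lem601} gives $\R^*\R={\rm Id}$ on $\F_q(\real_A^d)\ominus\com\Omega$. Transporting this through the unitaries $\phi_d$ and $\phi_{2d}$ yields
\[
\wt\R_d^*\wt\R_d(X)=X,\qquad X\in\A_d^0 .
\]

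\textbf{Step 2 (compatibility with the modular groups).} Each factor must intertwine the modular automorphism groups, so that its canonical $L_p$-extensions compose correctly:
\begin{itemize}
\item $\alpha_\theta$ does so by Remark \ref{rmk600}(1), and therefore so does $T_d$.
\item $S_t$ does so, and $\wt\Pi_d$ is given by the integral \eqref{eq618} in $S_z$, so $\wt\Pi_d$ does as well. More directly, $I\otimes E$ commutes with $U_t\otimes{\rm Id}$, so $N_2$ commutes with $\d\Gamma(\log A_2)$.
\item $\mathsf E$ is a state-preserving conditional expectation that commutes with $\sigma^\f$, so $\mathsf E^{(p)}$ is contractive.
\end{itemize}
Lemmas \ref{lem:qaw-T-bound} and \ref{lem:qaw-Pi-bound} then give
\[
\|(\wt\R_d^*)^{(p)}\|\le C_p .
\]
Moreover, on the dense set $D_d^{1/2p}\A_d^0D_d^{1/2p}$,
\[
(\wt\R_d^*)^{(p)}\,\wt\R_d^{(p)}={\rm Id},
\]
because the symmetric $L_p$-extensions of modular-compatible maps satisfy $(ST)^{(p)}=S^{(p)}T^{(p)}$. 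Continuity then extends this identity to all of $L_p^0(\Gamma_q(\real_A^d))$. This yields
\[
\|X\|_p\le C_p\|\wt\R_d^{(p)}X\|_p .
\]

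\textbf{Main obstacle.} I expect the hard part to be the domain issues, not the estimates. $T_d$ is defined on $\A_{2d}$ by a principal-value integral, and one must check that $\wt\Pi_d T_d$ maps $\A_{2d}$ into something on which $\mathsf E$ and the adjoint identities make sense. My plan is to avoid this by working in $L_2$. There $\wt\Pi_d$, $T_d$ and $\mathsf E$ are bounded: $T_d^{(2)}$ is bounded by Lemma \ref{lem:qaw-T-bound}, and on each $N$-eigenspace it acts via the spectral calculus of $\wt D$. I will verify the adjoint identity on $L_2$, and then use that $L_2\cap L_p$-type dense subspaces of the form $D^{1/2p}\A D^{1/2p}$ are preserved. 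This uses the fact that $\Gamma(I\otimes R_\theta)$ and $e^{-zN_2}$ preserve $\F_q^{\rm finite}$ degree by degree, so the principal-value integral converges within each finite-dimensional homogeneous piece.
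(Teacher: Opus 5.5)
Your proposal is correct and follows the paper's proof essentially step for step. The paper also computes the $L_2$-adjoint $\wt{\R}_d^*=-\mathcal E_d\circ T_d\circ\wt{\Pi}_d$, with $\mathcal E_d=\Gamma_{A_2,A_1}(I\otimes j^*)$ the vacuum-preserving conditional expectation; it uses Lemma \ref{lem601} to get $\wt{\R}_d^*\wt{\R}_d=I$ on $\A_d^0$, bounds $(\wt{\R}_d^*)^{(p)}$ via Lemmas \ref{lem:qaw-T-bound} and \ref{lem:qaw-Pi-bound}, and concludes by density. Your remarks on domains (working on the finite-dimensional homogeneous pieces, which $\Gamma(I\otimes R_\theta)$ and $e^{-zN_2}$ preserve) only make explicit what the paper leaves implicit.
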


\begin{proof}
The upper estimate follows from \eqref{eq624}, so we focus on the reverse estimate. Let us compute the adjoint of $\wt{\R}_d=\wt{\Pi}_d\circ T_d\circ h_{d,2d}^{-1}$. First, let us compute $(h_{d,2d}^{-1})^*$. For $X\in\A_d$ and $Y\in\A_{2d}$,
\begin{align}
\notag \left\langle h_{d,2d}^{-1}(X),Y\right\rangle_2 &=\left\langle \phi_{2d}h_{d,2d}^{-1}(X),\phi_{2d}(Y)\right\rangle_q=\left\langle j_{d,2d}\phi_d(X),\phi_{2d}(Y) \right\rangle_q\\
\label{eq629}&=\left\langle \phi_d(X),j_{d,2d}^*\phi_{2d}(Y) \right\rangle_q=\left\langle X,\phi_d^{-1}j_{d,2d}^*\phi_{2d}(Y) \right\rangle_2
\end{align}
by \eqref{eq600} and \eqref{eq230}. Let us denote by
\[\mathcal E_d =\Gamma_{A_2,A_1}(I\otimes j^*)=\phi_d^{-1}\circ j_{d,2d}^*\circ \phi_{2d}: \Gamma_q(\real_A^{2d}) \longrightarrow \Gamma_q(\real_A^d).\]
Then, by Lemma \ref{lem600}(i), $\mathcal E_d$ is the state-preserving conditional expectation onto the canonical copy of $\Gamma_q(\real_A^d)$ in $\Gamma_q(\real_A^{2d})$ and satisfies $(h_{d,2d}^{-1})^*=\mathcal E_d$ on $L_2(\Gamma_q(\mathbb R^{2d}_A))$ by \eqref{eq629}.

Second, let us compute $T_d^*$. Since $\alpha_\theta^*=\alpha_{-\theta}$ on $L_2(\Gamma_q(\mathbb R^{2d}_A))$ and $\kappa(-\theta)=-\kappa(\theta)$, we have
\begin{align*}
T_d^*=\frac{1}{\sqrt{2\pi}}\,{\rm p.v.}\int_{-\pi/2}^{\pi/2}\alpha_{-\theta}\kappa(\theta)\,d\theta =-\frac{1}{\sqrt{2\pi}}\, {\rm p.v.}\int_{-\pi/2}^{\pi/2}\alpha_\theta\kappa(\theta)\,d\theta = -T_d.
\end{align*}

Lastly, $\wt{\Pi}_d^*=\wt{\Pi}_d$. Therefore, we obtain
\begin{equation}\label{eq626}
\wt{\R}_d^*=-\mathcal E_d\circ T_d\circ\wt{\Pi}_d
\end{equation}
on the dense subspace $\A_{2d}$ in $L_2(\Gamma_q(\mathbb R^{2d}_A))$.

Note that $T_d$ and $\wt{\Pi}_d$ admit bounded $L_p$-extensions by Lemma \ref{lem:qaw-T-bound} and Lemma \ref{lem:qaw-Pi-bound}. Also, since $\mathcal E_d$ is a state-preserving conditional expectation, we have $\|\mathcal E_d^{(p)}\|\leq 1$. Thus \eqref{eq626} induces a bounded operator
\begin{equation}\label{eq626-p}
(\wt{\R}_d^*)^{(p)}=-\mathcal E_d^{(p)}\circ T_d^{(p)} \circ (\wt{\Pi}_d)^{(p)}
\end{equation}
from $L_p(\Gamma_q(\real_A^{2d}))$ to $L_p(\Gamma_q(\real_A^d))$. By contractivity of $\mathcal E_d^{(p)}$, Lemma
\ref{lem:qaw-T-bound} and Lemma \ref{lem:qaw-Pi-bound}, we obtain
\begin{equation}\label{eq626-bound}
\|(\wt{\R}_d^*)^{(p)}\|_{L_p(\Gamma_q(\real_A^{2d}))\longrightarrow L_p(\Gamma_q(\real_A^{d}))} \leq C_p.
\end{equation}
Here, $C_p$ depends only on $p$.

On the other hand, by Lemma \ref{lem601}, for $X,Y\in\A_d^0$,
\begin{align*}
\left\langle \wt{\R}_d(X),\wt{\R}_d(Y) \right\rangle_2&=\left\langle \phi_{2d}\wt{\R}_d(X),\phi_{2d}\wt{\R}_d(Y) \right\rangle_q\\
&=\left\langle\R\phi_d(X),\R\phi_d(Y)\right\rangle_q=\left\langle\phi_d(X),\phi_d(Y)\right\rangle_q=\langle X,Y\rangle_2.
\end{align*}
Hence we obtain $\wt{\R}_d^*\wt{\R}_d=I$ on the dense subspace $\A_d^0$ in $L_2(\Gamma_q(\mathbb{R}^{d}_A))$.

By the construction of the $L_p$-extensions, for any $Y\in\A_{2d}$,
\[(\wt{\R}_d^*)^{(p)}\left(D_{2d}^{1/2p}YD_{2d}^{1/2p}\right)=D_d^{1/2p}\wt{\R}_d^*(Y)D_d^{1/2p}.\]
Consequently,
\[(\wt{\R}_d^*)^{(p)}\wt{\R}_d^{(p)}\left(D_d^{1/2p}XD_d^{1/2p}\right)=D_d^{1/2p}XD_d^{1/2p}\]
for any $X\in\A_d^0$. Therefore, by \eqref{eq626-bound},
\[\left\|D_d^{1/2p}XD_d^{1/2p}\right\|_p\leq
C_p\left\|\wt{\R}_d^{(p)}\left(D_d^{1/2p}XD_d^{1/2p}\right)\right\|_p.\]
By density, this extends to any $X\in L_p^0(\Gamma_q(\real_A^d))$, and hence
\[ C_p^{-1} \|X\|_{L_p(\Gamma_q(\real_A^d))} \leq \|\wt{\R}_d^{(p)}(X)\|_{L_p(\Gamma_q(\real_A^{2d}))}.\]
Together with the upper estimate, this completes the proof.
\end{proof}

We conclude with a remark concerning square-function estimates. We do not obtain square-function estimates analogous to those in the baby Fock setting. Although the gradient admits the realization $\wt\nabla=i\wt D\circ h_{d,2d}^{-1}$, we do not have a suitable decomposition of $\wt\nabla$ into coordinate-wise partial derivatives analogous to the operators $\partial_j^\e$ in the baby Fock model. Consequently, the Riesz transform estimate in Theorem \ref{thm-main2} does not directly yield a coordinate square-function estimate.

	\bigskip

\appendix

\section{Carr\'e du champ of the Ornstein--Uhlenbeck semigroup on biased hypercubes}\label{appendix-A}

We record the explicit formula for the carr\'e du champ associated with the Ornstein--Uhlenbeck semigroup $(P_t)_{t\geq0}$ on the biased hypercube $(\Omega_n,\nu)$.

\begin{proposition}\label{hz:propA1}
For any $f,g\in L_\infty(\Omega_n,\nu)$, we have
\begin{equation}\label{eq:Gamma-1}
\Gamma(f,g)= \frac12 \sum_{j=1}^n (1+\zeta_j^2)\overline{D_jf}\,D_jg.
\end{equation}
In particular, for any mean-zero $f\in L_\infty(\Omega_n,\nu)$,
\[\Gamma(N^{-1/2}f,N^{-1/2}f)=\ce_{\omega'}\left[\sum_{j=1}^n(1\otimes1-z_j\otimes z_j)(|R_jf|^2\otimes1)\right],\]
where $\ce_{\omega'}$ denotes the conditional expectation obtained by integrating with respect to the second variable on $(\Omega_n\times\Omega_n,\nu\times\nu)$.
\end{proposition}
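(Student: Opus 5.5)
The plan is to reduce everything to a one-coordinate computation, using the product structure and the basis $\{\zeta_A\}$. First, I will use $N=\sum_j(I-E_j)$, which gives
\[\Gamma(f,g)=\sum_{j=1}^n\Gamma_j(f,g),\qquad \Gamma_j(f,g)=\tfrac12\bigl(\overline{f}(I-E_j)g+\overline{(I-E_j)f}\,g-(I-E_j)(\overline{f}g)\bigr).\]
Since $E_j$ is a conditional expectation, a direct expansion yields
\[\Gamma_j(f,g)=\tfrac12\bigl(E_j(\overline{f}g)-\overline{f}E_jg-\overline{E_jf}\,g+\overline{E_jf}E_jg\bigr)+\tfrac12\,\overline{(f-E_jf)}(g-E_jg).\]
Equivalently, it equals $\tfrac12\bigl(E_j[\overline{(f-E_jf)}(g-E_jg)]+\overline{(f-E_jf)}(g-E_jg)\bigr)$, which is the standard formula for the carr\'e du champ of $I-E_j$.

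Next I identify $f-E_jf$. Writing $f=a+\zeta_j b$ with $a,b$ independent of the $j$-th coordinate, the orthonormal expansion together with \eqref{eq204} and \eqref{eq205} gives $E_jf=a$ and $D_jf=b$. Hence $f-E_jf=\zeta_j D_jf$, where $D_jf$ does not depend on $\omega_j$. Substituting, and using $E_j(\zeta_j^2)=\|\zeta_j\|_{L_2(\nu_j)}^2=1$ (orthonormality), I get
\[\Gamma_j(f,g)=\tfrac12\bigl(E_j(\zeta_j^2)+\zeta_j^2\bigr)\overline{D_jf}\,D_jg=\tfrac12(1+\zeta_j^2)\,\overline{D_jf}\,D_jg.\]
Summing over $j$ gives \eqref{eq:Gamma-1}.

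For the second statement I put $f\mapsto N^{-1/2}f$, so that $D_jN^{-1/2}f=\frac{2}{\mu_j^{-2}+\mu_j^2}R_jf$. It then suffices to check the scalar identity
\[\int(1-z_j(\omega)z_j(\omega'))\,d\nu(\omega')=\frac{2(1+\zeta_j(\omega)^2)}{(\mu_j^{-2}+\mu_j^2)^2}.\]
The left side is $1-z_j(\omega)\,\mathbb E z_j$ with $\mathbb E z_j=\frac{\mu_j^{-2}-\mu_j^2}{\mu_j^{-2}+\mu_j^2}$, so I only need to compare the two cases $\omega_j=\pm1$. For $\omega_j=1$ the left side is $\frac{2\mu_j^2}{\mu_j^{-2}+\mu_j^2}$, and the right side is $\frac{2(1+\mu_j^4)}{(\mu_j^{-2}+\mu_j^2)^2}=\frac{2\mu_j^2}{\mu_j^{-2}+\mu_j^2}$, since $1+\mu_j^4=\mu_j^2(\mu_j^{-2}+\mu_j^2)$. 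The case $\omega_j=-1$ is symmetric. Because $|R_jf|^2\otimes1$ does not depend on $\omega'$, it factors out of $\ce_{\omega'}$, which gives the claimed formula.

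The only delicate point is the identification $f-E_jf=\zeta_jD_jf$, which is what makes the weight $\frac12(1+\zeta_j^2)$ appear. The remaining steps are routine algebra.
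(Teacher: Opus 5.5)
Your proof is correct. It takes a somewhat different route from the paper for the first identity, and your first displayed formula contains an algebra slip.

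\textbf{The slip.} Since $E_j(\overline{f}\,E_jg)=\overline{E_jf}\,E_jg$, the first bracket should read
\[
E_j(\overline{f}g)-\overline{E_jf}\,E_jg=E_j\bigl[\overline{(f-E_jf)}(g-E_jg)\bigr].
\]
As you wrote it, the display differs from $\Gamma_j(f,g)$ by $-\tfrac12\bigl(\overline{(f-E_jf)}\,E_jg+\overline{E_jf}\,(g-E_jg)\bigr)$. For instance, $f=g=1+\zeta_j$ gives $\tfrac12(1-\zeta_j)^2$ instead of $\tfrac12(1+\zeta_j^2)$. Your \emph{equivalently} formula is the correct one, and it is the one you actually use, so nothing downstream is affected.

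\textbf{Comparison with the paper.} The paper works on basis pairs $(\zeta_A,\zeta_B)$:
\begin{itemize}
\item it shows $\Gamma_j(\zeta_A,\zeta_B)=0$ unless $j\in A\cap B$;
\item in that case it uses $D_j^*D_j(\zeta_j^2)=\zeta_j^2-1$;
\item it then extends sesquilinearly.
\end{itemize}
You instead combine two facts:
\begin{itemize}
\item the basis-free identity $\Gamma_j(f,g)=\tfrac12\bigl(E_j[\overline{(f-E_jf)}(g-E_jg)]+\overline{(f-E_jf)}(g-E_jg)\bigr)$, valid for any conditional expectation;
\item the structural fact $f-E_jf=\zeta_jD_jf$, with $D_jf$ independent of $\omega_j$.
\end{itemize}
Both arguments hinge on $E_j(\zeta_j^2)=1$. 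Your version avoids the case split and shows where the weight comes from: $\tfrac12(1+\zeta_j^2)=\tfrac12(E_j\zeta_j^2+\zeta_j^2)$ is the averaged part plus the pointwise part of $|f-E_jf|^2$.

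Your route also clarifies the second identity. It gives
\[
\Gamma_j(u,u)(\omega)=\tfrac12\int|u(\omega)-u(\omega_1,\dots,y,\dots,\omega_n)|^2\,d\nu_j(y).
\]
Combined with $\zeta_j(\omega)-\zeta_j(y)=\tfrac{\mu_j^2+\mu_j^{-2}}{2}\bigl(z_j(\omega)-z_j(y)\bigr)$, this produces the factor $1-z_j(\omega)z_j(y)$ directly. Your two-case verification of $\int(1-z_j(\omega)z_j(\omega'))\,d\nu(\omega')$ is the same as the paper's.
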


\begin{proof}
By definition,
\[\Gamma(f,g)=\frac12\sum_{j=1}^n\left(\overline f\,D_j^*D_jg+\overline{D_j^*D_jf}\,g-D_j^*D_j(\overline fg)\right)=:\sum_{j=1}^n\Gamma_j(f,g).\]
It suffices to compute $\Gamma_j(\zeta_A,\zeta_B)$ for $A,B\subseteq[n]$.

Suppose first that $j\notin A\cap B$. We may assume that $j\notin A$. Since $\zeta_A$ is independent of the $j$-th
coordinate and $D_j^*D_j=I-E_j$, we have $D_j^*D_j(\zeta_A)=0$ and $D_j^*D_j(\zeta_A\zeta_B)=\zeta_A D_j^*D_j(\zeta_B)$. Therefore,
\[2\Gamma_j(\zeta_A,\zeta_B)=\zeta_A D_j^*D_j(\zeta_B)-D_j^*D_j(\zeta_A\zeta_B)=0.\]

Now suppose that $j\in A\cap B$. Since
$E_j(\zeta_j^2)=\int_{\{-1,1\}}\zeta_j^2\,d\nu_j=1$, we have
\[D_j^*D_j(\zeta_j^2)=(I-E_j)(\zeta_j^2)=\zeta_j^2-1.\]
Hence we obtain
\begin{align*}
D_j^*D_j(\zeta_A\zeta_B)=\zeta_{A\setminus\{j\}}\zeta_{B\setminus\{j\}}D_j^*D_j(\zeta_j^2)=\zeta_{A\setminus\{j\}}
\zeta_{B\setminus\{j\}}(\zeta_j^2-1).
\end{align*}
Since $D_j^*D_j(\zeta_A)=\zeta_A$ and $D_j^*D_j(\zeta_B)=\zeta_B$, we obtain
\begin{align*}
2\Gamma_j(\zeta_A,\zeta_B)&= 2\zeta_A\zeta_B-\zeta_{A\setminus\{j\}}\zeta_{B\setminus\{j\}}(\zeta_j^2-1)\\
&=(1+\zeta_j^2)\zeta_{A\setminus\{j\}}\zeta_{B\setminus\{j\}}=(1+\zeta_j^2)D_j(\zeta_A)D_j(\zeta_B).
\end{align*}
Thus, if $f=\sum_{A\subseteq[n]}f_A\zeta_A$ and $g=\sum_{B\subseteq[n]}g_B\zeta_B$, then
\begin{align}
\notag&\Gamma(f,g)=\sum_{A,B\subseteq[n]}\overline{f_A}g_B\Gamma(\zeta_A,\zeta_B)\\
\label{eqA1}&=\frac12\sum_{j=1}^n(1+\zeta_j^2)\sum_{A,B\subseteq[n]}\overline{f_A}g_B D_j(\zeta_A)D_j(\zeta_B)= \frac12 \sum_{j=1}^n (1+\zeta_j^2)\overline{D_jf}\,D_jg.
\end{align}

For the second assertion, let $u=N^{-1/2}f$. Then $|R_jf|^2=\frac{(\mu_j^{-2}+\mu_j^2)^2}{4}|D_ju|^2$. For each $\omega\in\Omega_n$, by the proof of Proposition \ref{prop500}, we have
\[\ce_{\omega'}\left[1-z_j(\omega)z_j(\omega')\right]=2\cdot \mathbb{P}_{\omega'}[z_j(\omega)\neq z_j(\omega')]
=\begin{cases}
\displaystyle \frac{2\mu_j^2}{\mu_j^{-2}+\mu_j^2},
& z_j(\omega)=1,\\[8pt]
\displaystyle
\frac{2\mu_j^{-2}}{\mu_j^{-2}+\mu_j^2},
& z_j(\omega)=-1.
\end{cases}\]
If $z_j(\omega)=1$, then $\zeta_j(\omega)=\mu_j^2$, and therefore
\begin{align*}
&\ce_{\omega'}\left[(1-z_j\otimes z_j)(|R_jf|^2\otimes1)\right](\omega)=\ce_{\omega'}\left[(1-z_j(\omega) z_j(\omega')\right] \cdot |R_jf(\omega)|^2\\
&=\frac{2\mu_j^2}{\mu_j^{-2}+\mu_j^2} \frac{(\mu_j^{-2}+\mu_j^2)^2}{4} |D_ju(\omega)|^2=\frac{1+\mu_j^4}{2}|D_ju(\omega)|^2=\frac{1+\zeta_j(\omega)^2}{2}|D_ju(\omega)|^2.
\end{align*}
Similarly, if $z_j(\omega)=-1$, then $\zeta_j(\omega)=-\mu_j^{-2}$ and
\[\ce_{\omega'}\left[(1-z_j\otimes z_j)(|R_jf|^2\otimes1)
\right](\omega)=\frac{1+\zeta_j(\omega)^2}{2}|D_ju(\omega)|^2.\]
Consequently, we have
\[\ce_{\omega'}\left[(1-z_j\otimes z_j)(|R_jf|^2\otimes1)\right]=\frac{1+\zeta_j^2}{2}|D_jN^{-1/2}f|^2.\]
Summing over $j$ and applying \eqref{eqA1} gives the desired identity.
\end{proof}

\bigskip

\section{Carr\'e du champ on the baby Fock model}\label{appendix-B}

We prove the identity \eqref{eq:baby-Gamma} in this section. For any $1\leq i\leq n$, let $N_i^\e=\beta_i^*\beta_i+\beta_{-i}^*\beta_{-i}\in B(H)$, and define $\wt N_i^\e=\phi_{n,\e}^{-1}N_i^\e\phi_{n,\e}\in B(\Gamma_{n,\e})$ and
\[\Gamma_i(X,Y)=\frac12 \left(X^*\wt N_i^\e(Y)+\wt N_i^\e(X)^*Y-\wt N_i^\e(X^*Y)\right).\]
Then $\displaystyle \Gamma_P(X,Y)=\sum_{i=1}^n\Gamma_i(X,Y)$ for all $X,Y\in\Gamma_{n,\e}$. We first record a commutation property that will be used below.

\begin{lemma}\label{lem-B1}
Fix $1\leq i\leq n$. There exists an involutive $*$-automorphism $\vartheta_i:\Gamma_{[n]\setminus\{i\},\e} \longrightarrow
\Gamma_{[n]\setminus\{i\},\e}$ satisfying $\vartheta_i(\gamma_j)=\e(i,j)\gamma_j$ for all $j\in[n]\setminus\{i\}$. Moreover, for any $b\in\Gamma_{[n]\setminus\{i\},\e}$ and $\#\in\{1,*\}$, we have $\beta_{\pm i}^{\#}b=
\vartheta_i(b)\beta_{\pm i}^{\#}$ and $b\beta_{\pm i}^{\#}=\beta_{\pm i}^{\#}\vartheta_i(b)$. Thus, we have
\begin{align}\label{eqB1}
\gamma_i^{\#}b=\vartheta_i(b)\gamma_i^{\#},\qquad b\gamma_i^{\#}=\gamma_i^{\#}\vartheta_i(b).
\end{align}
\end{lemma}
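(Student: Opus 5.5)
The plan is to realize $\vartheta_i$ as conjugation by a diagonal self-adjoint unitary on $H$. After that, the commutation relations reduce to the twisted canonical relations among the $\beta$'s and a generation argument.

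\emph{Step 1: construct $\vartheta_i$.} Define $V_i$ on the orthonormal basis $\{x_S\}$ of $H$ by
\[
V_i(x_S)=\Bigl(\prod_{s\in S,\ |s|\neq i}\e(i,s)\Bigr)x_S .
\]
This is a diagonal $\pm1$ operator, hence a self-adjoint unitary with $V_i^2={\rm Id}$ and $V_i({\bf 1})={\bf 1}$.

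For $j\in[\pm n]$ with $|j|\neq i$, both $\beta_j$ and $\beta_j^*$ change $S$ exactly by adding or removing $j$. Hence the sign factor changes by $\e(i,j)$, and
\[
V_i\beta_j^{\#}V_i=\e(i,j)\beta_j^{\#}, \qquad V_i\beta_{\pm i}^{\#}V_i=\beta_{\pm i}^{\#}.
\]
Since $\e(i,-j)=\e(i,j)$, this gives $V_i\gamma_jV_i=\e(i,j)\gamma_j$ for $j\neq i$. Therefore $\vartheta_i(b)=V_ibV_i$ is an involutive $*$-automorphism of $B(H)$ that leaves $\Gamma_{[n]\setminus\{i\},\e}$ invariant and acts on the generators as required.

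\emph{Step 2: check the commutation on generators.} From the relations $\beta_k\beta_j-\e(k,j)\beta_j\beta_k=0$, $\beta_k^*\beta_j^*-\e(k,j)\beta_j^*\beta_k^*=0$ and $\beta_k\beta_j^*-\e(k,j)\beta_j^*\beta_k=\delta_{k,j}I$, applied with $k=\pm i$ and $|j|\neq i$ (so the $\delta$ term vanishes), together with the adjoints of these relations, one gets
\[
\beta_{\pm i}^{\#}\beta_j^{\flat}=\e(i,j)\,\beta_j^{\flat}\beta_{\pm i}^{\#}
\]
for all choices of $\#,\flat\in\{1,*\}$. Here I use $\e(\pm i,j)=\e(i,|j|)$. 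Because $\gamma_j$ and $\gamma_j^*$ are linear combinations of $\beta_{\pm j}^{\flat}$, it follows that
\[
\beta_{\pm i}^{\#}\gamma_j^{\flat}=\e(i,j)\gamma_j^{\flat}\beta_{\pm i}^{\#}=\vartheta_i(\gamma_j^{\flat})\beta_{\pm i}^{\#}.
\]

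\emph{Step 3: extend to all $b$ and conclude.} Let $\mathcal B$ be the set of $b$ satisfying $\beta_{\pm i}^{\#}b=\vartheta_i(b)\beta_{\pm i}^{\#}$. Since $\vartheta_i$ is linear, unital and multiplicative, $\mathcal B$ is a unital algebra. Indeed, $\beta bc=\vartheta(b)\beta c=\vartheta(b)\vartheta(c)\beta$. By Step 2, $\mathcal B$ contains all $\gamma_j$ and $\gamma_j^*$ with $j\neq i$. Since everything is finite-dimensional, $\mathcal B$ therefore contains $\Gamma_{[n]\setminus\{i\},\e}$.

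The second identity $b\beta_{\pm i}^{\#}=\beta_{\pm i}^{\#}\vartheta_i(b)$ follows by applying the first identity to $\vartheta_i(b)$ and using $\vartheta_i^2={\rm id}$. Finally, $\gamma_i=\mu_i\beta_{-i}+\mu_i^{-1}\beta_i^*$ and $\gamma_i^*$ are linear combinations of $\beta_{\pm i}^{\#}$, so (\ref{eqB1}) follows by linearity.

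The only delicate point is the sign bookkeeping in Step 1 and Step 2. One has to confirm that the multiplier of $V_i$ really changes by exactly $\e(i,j)$ under $\beta_j^{\#}$, independently of the ordering convention used in $x_S$. This holds because the multiplier depends only on the set $S$, not on the ordered product. One also has to confirm that the mixed relations between $\beta_{\pm i}$ and $\beta_{\pm j}$ all carry the same sign $\e(i,|j|)$, which follows from the hypothesis $\e(k,l)=\e(|k|,|l|)$.
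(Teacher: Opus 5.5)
Your proposal is correct and matches the paper's proof: your $V_i$ is exactly the paper's diagonal unitary $U_i$ (with factor $1$ when $|s|=i$ and $\e(i,s)$ otherwise), $\vartheta_i$ is conjugation by it restricted to $\Gamma_{[n]\setminus\{i\},\e}$, the twisted commutation is extended from the generators to the whole finite-dimensional algebra, and the second identity comes from substituting $\vartheta_i(b)$ and using $\vartheta_i^2={\rm id}$. The only difference is that you derive $\beta_{\pm i}^{\#}\beta_j^{\flat}=\e(i,j)\beta_j^{\flat}\beta_{\pm i}^{\#}$ explicitly from the twisted relations, whereas the paper simply recalls it.
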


\begin{proof}
For fixed $i$, we define $\displaystyle \delta_j^{(i)}=
\begin{cases}
1,&j=i,\\
\e(i,j),&j\neq i,
\end{cases}$ for all $j\in[n]$, and a unitary $U_i\in B(H)$ by
\[U_i(x_S)=\left(\prod_{s\in S}\delta_{|s|}^{(i)}\right)x_S,\qquad S\subseteq[\pm n].\]
Consider the $*$-automorphism ${\rm Ad}(U_i)$ of $B(H)$. Then we can verify $U_i\beta_{\pm j}U_i^*=\delta_j^{(i)}\beta_{\pm j}$ and $U_i\beta_{\pm j}^*U_i^*=\delta_j^{(i)}\beta_{\pm j}^*$ for all $j\in[n]$. Hence, for any $j\in[n]\setminus\{i\}$,
\[ U_i\gamma_jU_i^*=\e(i,j)\gamma_j.\]
Thus ${\rm Ad}(U_i)$ leaves $\Gamma_{[n]\setminus\{i\},\e}$ invariant. We denote its restriction to this subalgebra by $\vartheta_i$. Since $U_i^2=I$, $\vartheta_i$ is an
involutive $*$-automorphism.

Moreover, for any $j \in [n]\setminus \left\{ i\right\}$, recall that
\[\beta_{\pm i}^{\#}\gamma_j=\e(i,j)\gamma_j\beta_{\pm i}^{\#} =\vartheta_i(\gamma_j)\beta_{\pm i}^{\#},\]
and the same identity holds with $\gamma_j^*$ in place of $\gamma_j$. Since $\Gamma_{[n]\setminus\{i\},\e}$ is finite-dimensional and generated by $\{\gamma_j:j \in [n]\setminus\left\{i\right\}\}$, it follows that
\[\beta_{\pm i}^{\#}b=\vartheta_i(b)\beta_{\pm i}^{\#}\]
for all $b\in\Gamma_{[n]\setminus\{i\},\e}$. Since $\vartheta_i^2={\rm id}$, replacing $b$ by $\vartheta_i(b)$ yields
\[b\beta_{\pm i}^{\#}=\beta_{\pm i}^{\#}\vartheta_i(b).\]
Finally, \eqref{eqB1} follows since $\gamma_i=\mu_i\beta_{-i}+\mu_i^{-1}\beta_i^*$ and $\gamma_i^*=\mu_i\beta_{-i}^*+\mu_i^{-1}\beta_i$.

\end{proof}

We next record the action of $\wt N_i^\e$ and of the partial derivatives on the coordinate decomposition from Proposition
\ref{prop:basis-gamma}.

\begin{lemma}
Let $1\leq i\leq n$ and $a,b,c,d\in\Gamma_{[n]\setminus\{i\},\e}$. Then we have
\begin{align}\label{eqB2}
\left \{\begin{array}{llll}
&\wt N_i^\e(a)=0,\\
&\wt N_i^\e(b\gamma_i)=b\gamma_i,\\
&\wt N_i^\e(c\gamma_i^*)=c\gamma_i^*,\\
&\wt N_i^\e(d\eta_i)=2d\eta_i,
\end{array} \right .
\end{align}
and
\begin{align}\label{eqB3}
\left\{\begin{array}{llll}
&\partial_i^\e(a)=0,\\
&\partial_i^\e(b\gamma_i)=\mu_i^{-1}\vartheta_i(b),\\
&\partial_i^\e(c\gamma_i^*)=0,\\
&\partial_i^\e(d\eta_i)=-\mu_i^{-1}\vartheta_i(d)\gamma_i^*,
\end{array} \right .  ,
\qquad \left\{\begin{array}{llll}
&\partial_{-i}^\e(a)=0,\\
&\partial_{-i}^\e(b\gamma_i)=0,\\
&\partial_{-i}^\e(c\gamma_i^*)=\mu_i\vartheta_i(c),\\
&\partial_{-i}^\e(d\eta_i)=\mu_i\vartheta_i(d)\gamma_i.
\end{array} \right .
\end{align}
\end{lemma}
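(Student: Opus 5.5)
The plan is to verify every identity at the level of the GNS space $H$, using $\phi_{n,\e}(X)=X{\bf 1}$. We then pull back through $\phi_{n,\e}^{-1}$, since $\wt N_i^\e$ and $\partial_{\pm i}^\e$ are defined as conjugates of $N_i^\e$ and $\beta_{\pm i}$.

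The basic observation concerns vectors without the $i$-th modes. For any $b\in\Gamma_{[n]\setminus\{i\},\e}$, the vector $b{\bf 1}$ lies in the span of words $x_S$ with $\pm i\notin S$. We therefore first compute $\gamma_i$, $\gamma_i^*$, $\eta_i$ on a vector $\zeta$ in the closed span $K_i$ of $\{x_S: \pm i\notin S\}$. Since $\beta_{\pm i}\zeta=0$, we get
\[
\gamma_i\zeta=\mu_i^{-1}x_i\zeta,\qquad \gamma_i^*\zeta=\mu_ix_{-i}\zeta .
\]
Using $\beta_ix_i\zeta=\zeta$, we also get $\gamma_i^*x_i\zeta=\mu_ix_{-i}x_i\zeta+\mu_i^{-1}\zeta$, and hence
\[
\eta_i\zeta=\mu_i^{-1}\gamma_i^*x_i\zeta-\mu_i^{-2}\zeta=x_{-i}x_i\zeta .
\]

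Next I move the coefficients to the right of the $i$-th factors. By Lemma \ref{lem-B1} we have $b\gamma_i=\gamma_i\vartheta_i(b)$ and $c\gamma_i^*=\gamma_i^*\vartheta_i(c)$. By Proposition \ref{prop-indep}(iv), $d\eta_i=\eta_id$. This gives
\[
\phi(a)=a{\bf 1},\quad \phi(b\gamma_i)=\mu_i^{-1}x_i\,\vartheta_i(b){\bf 1},\quad \phi(c\gamma_i^*)=\mu_ix_{-i}\,\vartheta_i(c){\bf 1},\quad \phi(d\eta_i)=x_{-i}x_i\,d{\bf 1},
\]
where each of $a{\bf 1}$, $\vartheta_i(b){\bf 1}$, $\vartheta_i(c){\bf 1}$, $d{\bf 1}$ lies in $K_i$. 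Since $N_i^\e$ counts the occurrences of $\pm i$, these four vectors have $N_i^\e$-eigenvalues $0,1,1,2$. This yields \eqref{eqB2}.

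For \eqref{eqB3} I apply $\beta_{\pm i}$ to the same vectors. The identities $\beta_ix_i\xi=\xi$ and $\beta_{-i}x_{-i}\xi=\xi$ for $\xi\in K_i$ give $\partial_i^\e(b\gamma_i)=\mu_i^{-1}\vartheta_i(b)$ and $\partial_{-i}^\e(c\gamma_i^*)=\mu_i\vartheta_i(c)$. The vanishing entries are clear, since the relevant mode is absent.

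The only delicate case is $d\eta_i$, where the output must be rewritten as an algebra element. Using $\e(i,-i)=-1$ exactly as in Proposition \ref{prop300}, we get $\beta_i(x_{-i}x_id{\bf 1})=-x_{-i}d{\bf 1}$ and $\beta_{-i}(x_{-i}x_id{\bf 1})=x_id{\bf 1}$. These vectors must be recognized as $\phi$ of the stated elements. Applying Lemma \ref{lem-B1} once more gives
\[
\vartheta_i(d)\gamma_i^*{\bf 1}=\gamma_i^*d{\bf 1}=\mu_ix_{-i}d{\bf 1},\qquad \vartheta_i(d)\gamma_i{\bf 1}=\gamma_id{\bf 1}=\mu_i^{-1}x_id{\bf 1}.
\]
Hence $-x_{-i}d{\bf 1}=\phi\bigl(-\mu_i^{-1}\vartheta_i(d)\gamma_i^*\bigr)$ and $x_id{\bf 1}=\phi\bigl(\mu_i\vartheta_i(d)\gamma_i\bigr)$, as claimed.

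I expect the main obstacle to be this bookkeeping of where the twist $\vartheta_i$ lands. It depends on $d$ commuting with $\eta_i$ but not with $\gamma_i^{\#}$, so the order of operations must be tracked carefully. The sign from $\e(i,-i)=-1$ must also be handled consistently with Proposition \ref{prop300}.
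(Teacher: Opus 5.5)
Your proposal is correct and follows essentially the same route as the paper. Like the paper, you move the coefficients past $\gamma_i^{\#}$ via Lemma \ref{lem-B1} and past $\eta_i$ via Proposition \ref{prop-indep}(iv), write $\phi_{n,\e}$ of each element as $x_i\xi$, $x_{-i}\xi$ or $x_{-i}x_i\xi$ with $\xi$ free of the modes $\pm i$, read off the $N_i^\e$-eigenvalues and the action of $\beta_{\pm i}$, and then invoke Lemma \ref{lem-B1} once more to identify $-x_{-i}d{\bf 1}$ and $x_id{\bf 1}$ as $\phi_{n,\e}$ of $-\mu_i^{-1}\vartheta_i(d)\gamma_i^*$ and $\mu_i\vartheta_i(d)\gamma_i$. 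Your explicit check that $\eta_i\zeta=x_{-i}x_i\zeta$ on vectors without the $\pm i$ modes fills in a step the paper states without computation.
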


\begin{proof}
Since $a{\bf 1}$ contains neither the $i$-th nor the $(-i)$-th spin coordinate, we have $\beta_i(a{\bf 1})=\beta_{-i}(a{\bf 1})=0$ and $N_i^\e(a{\bf 1})=0$. Thus, we obtain
\begin{align*}
&\wt N_i^\e(a)=0,\\
&\partial_i^\e(a)=\partial_{-i}^\e(a)=0.    
\end{align*}

Since $\vartheta_i(b){\bf 1}$ and $\vartheta_i(c){\bf 1}$ contain neither of the coordinates $\pm i$, by Lemma \ref{lem-B1}, we have
\begin{align*}
\phi_{n,\e}(b\gamma_i)&=\phi_{n,\e}(\gamma_i\vartheta_i(b))=\gamma_i\vartheta_i(b){\bf 1}=\mu_i^{-1}x_i\vartheta_i(b){\bf 1},\\
\phi_{n,\e}(c\gamma_i^*)&=\phi_{n,\e}(\gamma_i^*\vartheta_i(c))=\gamma_i^*\vartheta_i(c){\bf 1}=\mu_i x_{-i}\vartheta_i(c){\bf 1}.
\end{align*}
This implies $(\beta_i^*\beta_i+\beta_{-i}^*\beta_{-i})(\phi_{n,\e}(b\gamma_i))=\phi_{n,\e}(b\gamma_i)$ and $(\beta_i^*\beta_i+\beta_{-i}^*\beta_{-i})(\phi_{n,\e}(c\gamma_i^*))=\phi_{n,\e}(c\gamma_i^*)$, so we obtain
\begin{align*}
&\wt N_i^\e(b\gamma_i)=b\gamma_i,\\
&\wt N_i^\e(c\gamma_i^*)=c\gamma_i^*.
\end{align*}
Moreover, by definitions of $\partial_{i}^{\e}$ and $\partial_{-i}^{\e}$,
\begin{align*}
&\partial_i^\e(b\gamma_i)=(\phi_{n,\e}^{-1}\circ \beta_i) \left(\mu_i^{-1}x_i\vartheta_i(b){\bf 1}\right)=\phi_{n,\e}^{-1}(\mu_i^{-1}\vartheta_i(b){\bf 1})=\mu_i^{-1}\vartheta_i(b),\\
&\partial_{-i}^\e(b\gamma_i)=(\phi_{n,\e}^{-1}\circ \beta_{-i})(\mu_i^{-1}x_i\vartheta_i(b){\bf 1})=0.
\end{align*}
Similarly, we obtain 
\[\partial_i^\e(c\gamma_i^*)=0,\qquad \partial_{-i}^\e(c\gamma_i^*)=
\mu_i\vartheta_i(c).\] 

Lastly, $d\eta_i=\eta_i d$ by Proposition \ref{prop-indep} (iv). Since
\[\phi_{n,\e}(d\eta_i)=\eta_i d{\bf 1}=(\gamma_i^*\gamma_i-\mu_i^{-2}{\rm Id})d{\bf 1}=x_{-i}x_i d{\bf 1}\]
contains both coordinates $i$ and $-i$, we obtain $N_i^\e\phi_{n,\e}(d\eta_i)=2\phi_{n,\e}(d\eta_i)$, and hence
\[\wt N_i^\e(d\eta_i)=2d\eta_i.\]
Furthermore,
\begin{align*}
&\partial_i^\e(d\eta_i)=(\phi_{n,\e}^{-1}\circ \beta_i)(x_{-i}x_i d{\bf 1})=\phi_{n,\e}^{-1}(-x_{-i}d{\bf 1})\\
&=\phi_{n,\e}^{-1}(-\mu_i^{-1}\gamma_i^* d{\bf 1})=\phi_{n,\e}^{-1}(-\mu_i^{-1}\vartheta_i(d)\gamma_i^*{\bf 1})=-\mu_i^{-1}\vartheta_i(d)\gamma_i^*,
\end{align*}
where we used $\e(i,-i)=\e(i,i)=-1$ and Lemma \ref{lem-B1}. Similarly, we obtain
\begin{align*}
&\partial_{-i}^\e(d\eta_i) =(\phi_{n,\e}^{-1}\circ \beta_{-i})(x_{-i}x_i d{\bf 1})=\phi_{n,\e}^{-1}(x_i d{\bf 1}) \\
&= \phi_{n,\e}^{-1}(\mu_i \gamma_i d{\bf 1})=\phi_{n,\e}^{-1}( \mu_i\vartheta_i(d)\gamma_i{\bf 1})=\mu_i\vartheta_i(d)\gamma_i.
\end{align*}
\end{proof}

\begin{proof}[Proof of \eqref{eq:baby-Gamma}]
Let us fix $1\leq i\leq n$, and first prove
\begin{equation}\label{eqB4}
\Gamma_i(X,Y)=\partial_i^\e(X)^*\partial_i^\e(Y)+\partial_{-i}^\e(X)^*\partial_{-i}^\e(Y),
\qquad X,Y\in\Gamma_{n,\e}.
\end{equation}
By Proposition \ref{prop:basis-gamma}, any $X\in\Gamma_{n,\e}$ admits a unique decomposition
\[X=a+b\gamma_i+c\gamma_i^*+d\eta_i,\qquad a,b,c,d\in\Gamma_{[n]\setminus\{i\},\e}.\]
Since both sides of \eqref{eqB4} are conjugate-linear in $X$ and linear in $Y$, it is sufficient to verify \eqref{eqB4} on pairs of $a, b\gamma_i, c\gamma_i^*, d\eta_i$.

We shall use the identities
\begin{align}\label{eqB5}
\left\{\begin{array}{llll}
\gamma_i^*\gamma_i=\eta_i+\mu_i^{-2}{\rm Id}\\
\gamma_i\gamma_i^*=\mu_i^2{\rm Id}-\eta_i\\
\gamma_i^*\eta_i=-\mu_i^{-2}\gamma_i^*\\
\gamma_i\eta_i=\mu_i^2\gamma_i\\
\eta_i^2=(\mu_i^2-\mu_i^{-2})\eta_i+{\rm Id}
\end{array} \right . .
\end{align}

Let $a\in\Gamma_{[n]\setminus\{i\},\e}$. By \eqref{eqB2} and the decomposition in Proposition \ref{prop:basis-gamma}, we have $\wt N_i^\e(a^*Y)=a^*\wt N_i^\e(Y)$ for all $ Y\in\Gamma_{n,\e}$, and this implies
\begin{align*}
    \Gamma_i(a,Y)&=\frac{1}{2}\left (\wt{N}_i^\e(a)^*Y+a^*\wt{N}^\e_i(Y)-\wt{N}_i^\e(a^*Y)\right )=\frac{1}{2}\left (0+a^*\wt{N}_i^\e(Y)-a^*\wt{N}_i^\e(Y)\right )\\
    &=0=0+0=\partial_i^\e(a)^*\partial_i^\e(Y)+\partial_{-i}^\e(a)^*\partial_{-i}^\e(Y).
\end{align*}
Moreover, \eqref{eqB1} and \eqref{eqB2}, together with Proposition \ref{prop:basis-gamma}, show that $\wt N_i^\e$ is $*$-preserving. Hence
\[\Gamma_i(X,Y)^*=\Gamma_i(Y,X).\]
Thus $\Gamma_i(Y,a)=0=\partial_i^\e(Y)^*\partial_i^\e(a)+\partial_{-i}^\e(Y)^*\partial_{-i}^\e(a)$ as well. 

Let $b,b',c,c',d,d'\in \Gamma_{[n]\setminus\{i\},\e}$. Using \eqref{eqB1}, \eqref{eqB2}, and \eqref{eqB5}, we obtain
\begin{align}
\label{eqB6}
\notag \Gamma_i(b\gamma_i,b'\gamma_i)&=\mu_i^{-2}\vartheta_i(b)^*\vartheta_i(b')=\bigl(\mu_i^{-1}\vartheta_i(b)\bigr)^*\bigl(\mu_i^{-1}\vartheta_i(b')\bigr)+0\\
\notag &=\partial_i^\e(b\gamma_i)^*\partial_i^\e(b'\gamma_i)+\partial_{-i}^\e(b\gamma_i)^*
\partial_{-i}^\e(b'\gamma_i), \\[4pt]
\notag
\Gamma_i(c\gamma_i^*,c'\gamma_i^*)
&=\mu_i^2\vartheta_i(c)^*\vartheta_i(c')=0+\bigl(\mu_i\vartheta_i(c)\bigr)^*\bigl(\mu_i\vartheta_i(c')\bigr)\\
\notag
&=\partial_i^\e(c\gamma_i^*)^*\partial_i^\e(c'\gamma_i^*)+\partial_{-i}^\e(c\gamma_i^*)^*
\partial_{-i}^\e(c'\gamma_i^*), \\[4pt]
\notag \Gamma_i(b\gamma_i,c'\gamma_i^*)
&=\Gamma_i(c\gamma_i^*,b'\gamma_i)=0=0+0=\partial_i^\e(b\gamma_i)^*\partial_i^\e(c'\gamma_i^*)+\partial_{-i}^\e(b\gamma_i)^*\partial_{-i}^\e(c'\gamma_i^*).
\end{align}

Moreover, for the mixed terms involving $\eta_i$, we have
\begin{align*}
\Gamma_i(b\gamma_i,d'\eta_i)&=-\mu_i^{-2}\vartheta_i(b)^*\vartheta_i(d')\gamma_i^*=
\bigl(\mu_i^{-1}\vartheta_i(b)\bigr)^*\bigl(-\mu_i^{-1}\vartheta_i(d')\gamma_i^*\bigr)+0\\
&=\partial_i^\e(b\gamma_i)^*\partial_i^\e(d'\eta_i)+\partial_{-i}^\e(b\gamma_i)^*
\partial_{-i}^\e(d'\eta_i),\\
\Gamma_i(c\gamma_i^*,d'\eta_i)
&=\mu_i^2\vartheta_i(c)^*\vartheta_i(d')\gamma_i=0+\bigl(\mu_i\vartheta_i(c)\bigr)^*
\bigl(\mu_i\vartheta_i(d')\gamma_i\bigr)\\
&=\partial_i^\e(c\gamma_i^*)^*\partial_i^\e(d'\eta_i)+\partial_{-i}^\e(c\gamma_i^*)^* \partial_{-i}^\e(d'\eta_i).
\end{align*}
The reversed mixed terms follow from $\Gamma_i(X,Y)^*=\Gamma_i(Y,X)$.

It remains to consider the case $X=d\eta_i$ and $Y=d'\eta_i$. Since $\eta_i\cdot d^*d'=d^*d'\cdot \eta_i$, we have 
\[(d\eta_i)^*(d'\eta_i)=d^*d'\eta_i^2=d^*d'\left((\mu_i^2-\mu_i^{-2})\eta_i+{\rm Id}\right)\]
by \eqref{eqB5}. Therefore, by \eqref{eqB2},
\begin{align}
\label{eqB7}
\Gamma_i(d\eta_i,d'\eta_i)&=2d^*d'\eta_i^2-\frac12\wt N_i^\e(d^*d'\eta_i^2)=d^*d'\left(2\cdot {\rm Id}+(\mu_i^2-\mu_i^{-2})\eta_i\right).
\end{align}
On the other hand, by \eqref{eqB1}, \eqref{eqB3}, and \eqref{eqB5},
\begin{align*}
&\partial_i^\e(d\eta_i)^*\partial_i^\e(d'\eta_i)+\partial_{-i}^\e(d\eta_i)^*\partial_{-i}^\e(d'\eta_i)\\
&=\mu_i^{-2}\gamma_i\vartheta_i(d)^*\vartheta_i(d')\gamma_i^*+\mu_i^2\gamma_i^*\vartheta_i(d)^*\vartheta_i(d')\gamma_i\\
&=\mu_i^{-2}d^*d'\gamma_i\gamma_i^*+\mu_i^2d^*d'\gamma_i^*\gamma_i =d^*d'\left(2\cdot {\rm Id}+(\mu_i^2-\mu_i^{-2})\eta_i\right).
\end{align*}
Hence, $\Gamma_i(X,Y)=\partial_i^\e(X)^*\partial_i^\e(Y)+\partial_{-i}^\e(X)^*\partial_{-i}^\e(Y)$ holds for $X=d\eta_i$ and $Y=d'\eta_i$.

Lastly, by summing \eqref{eqB4} over all $1\leq i\leq n$, we can conclude that
\begin{align*}
\Gamma_P(X,Y)&=\sum_{i=1}^n\Gamma_i(X,Y)=
\sum_{i=1}^n\left(\partial_i^\e(X)^*\partial_i^\e(Y)+\partial_{-i}^\e(X)^*\partial_{-i}^\e(Y)\right) =\sum_{j\in[\pm n]}\partial_j^\e(X)^*\partial_j^\e(Y).
\end{align*}
\end{proof}

\bigskip

{\bf Statement on the Use of Artificial Intelligence.} The authors declare that no artificial intelligence (AI) or AI-assisted technologies were used in the development, derivation of proofs, mathematical verifications, or writing of this paper.

{\bf Acknowledgements.} The authors would like to thank Marius Junge, Javier Parcet, and Gilles Pisier for helpful discussions and comments concerning this problem, as well as for their encouragement.
H.H.Lee and Z.Xu were supported by the National Research Foundation of Korea (NRF) grant funded by the Korea government(MSIT) (No.RS-2022-NR069971).
Z.Xu and S.-G.Y. were supported by the National Research Foundation of Korea (NRF) grant funded by the Korea government(MSIT) (No.RS-2025-00561391).
S.-G.Y. was supported by the National Research Foundation of Korea (NRF) grant funded by the Korea government(MSIT) (No.RS-2024-00413957).

\end{document}